\documentclass[11pt,reqno]{amsart}
\usepackage[T1]{fontenc}
\usepackage[utf8]{inputenc}
\usepackage{lmodern}
\usepackage{amsmath,amssymb,amsthm,mathtools,mathrsfs}
\usepackage[margin=1in]{geometry}
\usepackage{microtype,booktabs,array}
\usepackage[numbers,sort&compress]{natbib}
\usepackage[colorlinks=true,linkcolor=blue,citecolor=blue,urlcolor=blue]{hyperref}
\usepackage{zref-clever}
\zcsetup{cap=true,abbrev=false}
\usepackage{aliascnt}
\numberwithin{equation}{section}
\newtheorem{theorem}{Theorem}[section]
\newaliascnt{lemma}{theorem}
\newtheorem{lemma}[lemma]{Lemma}
\aliascntresetthe{lemma}
\newaliascnt{proposition}{theorem}
\newtheorem{proposition}[proposition]{Proposition}
\aliascntresetthe{proposition}
\newaliascnt{corollary}{theorem}

\aliascntresetthe{corollary}
\theoremstyle{definition}
\newaliascnt{definition}{theorem}

\aliascntresetthe{definition}
\theoremstyle{remark}
\newaliascnt{remark}{theorem}

\aliascntresetthe{remark}
\newcommand{\E}{\mathbb E}
\newcommand{\R}{\mathbb R}

\newcommand{\dd}{\,\mathrm d}
\DeclareMathOperator{\Var}{Var}
\DeclareMathOperator{\Cov}{Cov}
\DeclareMathOperator{\supp}{supp}
\DeclareMathOperator{\sech}{sech}

\allowdisplaybreaks[1]
\title[Parisi measure of the S--K model with external field]{Parisi measure of the Sherrington--Kirkpatrick model with external field}
\author{Patrick Lopatto}
\hypersetup{pdftitle={Parisi measure of the Sherrington--Kirkpatrick model with external field},pdfauthor={Patrick Lopatto}}
\date{\today}
\begin{document}
\begin{abstract}
We prove that the Parisi measure of the Sherrington--Kirkpatrick model has
connected support at every positive temperature and every 
positive external field. When the model breaks replica symmetry, its
support is an interval contained in $(0,1)$, its restriction to the interior
has a smooth density that remains strictly positive up to both endpoints,
and both endpoints carry atoms. The proof rests on a strict inequality
for moments of spatial derivatives of the Parisi PDE, which both excludes
gaps in the support and forces positivity of the density. The same
estimates recover the zero-field support description and give a lower
bound for the mass of its upper atom.
\end{abstract}
\maketitle
\tableofcontents

\section{Introduction}\label{sec:introduction}

The Parisi measure is the order parameter of a mean-field spin glass, introduced in Parisi's description of replica symmetry breaking \cite{Parisi1979,Parisi1980OrderParameter}. Its support encodes the hierarchy of overlap scales: a point mass represents replica symmetry, while a richer support describes progressively more elaborate forms of symmetry breaking. A basic question is whether replica symmetry breaking in the Sherrington--Kirkpatrick model \cite{SherringtonKirkpatrick1975} produces a genuinely continuous hierarchy of states. Understanding the geometry of this support is therefore central to describing the form of symmetry breaking predicted by the Parisi theory.

The foundation of the mathematical theory of mean-field spin glasses is the 
Parisi variational formula for the limiting free energy. After Guerra and
Toninelli established the thermodynamic limit
\cite{GuerraToninelli2002Thermodynamic}, Guerra proved the sharp
interpolation upper bound \cite{Guerra2003}, while Aizenman, Sims, and
Starr developed an extended variational principle for the
Sherrington--Kirkpatrick free energy \cite{AizenmanSimsStarr2003}.
Talagrand proved the matching lower bound and hence the Parisi formula
\cite{Talagrand2006}, and Panchenko later extended the formula to general
mixed \(p\)-spin models \cite{Panchenko2014Parisi}. Talagrand also identified the Parisi measure with the limiting overlap
distribution for mixed even $p$-spin models at positive external field,
under genericity assumptions on the interaction coefficients
\cite{Talagrand2003OrderParameter}. 
Complementing this variational theory, Panchenko
\cite{Panchenko2013Ultrametricity} proved that limiting overlap arrays
satisfying the Ghirlanda--Guerra identities
\cite{GhirlandaGuerra1998} are ultrametric. Auffinger and Chen
\cite{AuffingerChen2015Unique} proved strict convexity of the Parisi
functional and uniqueness of its minimizing measure. These results identify
the free energy and its order parameter, but leave open the structure of the
minimizer for a given inverse temperature and external field.

The analysis of the minimizer has developed in parallel with probabilistic and PDE representations of the Parisi functional. Panchenko \cite{Panchenko2005ParisiFunctional} obtained an early convexity result for the functional, and W.-K.\ Chen \cite{Chen2015ConvexityPDE} developed a PDE approach to this problem. Auffinger and W.-K.\ Chen \cite{AuffingerChen2015Unique} introduced a stochastic variational representation of the Parisi PDE, and Jagannath and Tobasco \cite{JagannathTobasco2016Dynamic} gave a dynamic-programming formulation and an alternative derivation of its basic properties. 
Talagrand \cite{Talagrand2006ParisiMeasures} studied differentiability of
the Parisi functional, and Panchenko \cite{Panchenko2008Differentiability}
gave an alternative approach to differentiability of the Parisi formula. Auffinger and W.-K.\ Chen \cite{AuffingerChen2015Properties} developed
regularity estimates for the Parisi PDE and corresponding self-consistency
conditions at support points. W.-K.\ Chen \cite{Chen2017Variational}, and
independently Jagannath and Tobasco \cite{JagannathTobasco2017}, subsequently
obtained first-order variational characterizations of Parisi measures. In these characterizations, the support is contained in the set of global minima of a function determined by the solution of the Parisi PDE.

At positive field, the first question is whether the measure is a point
mass. The stability calculation of de Almeida and Thouless
\cite{deAlmeidaThouless1978} predicts the boundary of the replica-symmetric
region in the Sherrington--Kirkpatrick model. Rigorous replica-symmetric regions at high temperature and large
external field were established by Talagrand \cite{Talagrand2000RSB} and
Guerra and Toninelli \cite{GuerraToninelli2002Quadratic}, while Toninelli
\cite{Toninelli2002} proved failure of replica symmetry when the
de Almeida--Thouless stability condition is violated. Jagannath and Tobasco
\cite{JagannathTobasco2017} showed that the predicted boundary agrees with
the phase boundary outside a bounded set of parameters. Bolthausen
\cite{Bolthausen2019Morita} proved the replica-symmetric free-energy formula
for sufficiently small inverse temperature, and Brennecke and Yau
\cite{BrenneckeYau2022} extended its range of validity. Replica symmetry
throughout the region up to the de Almeida--Thouless line was recently proved in
\cite{Lopatto2026AT}. This determines when the minimizing measure is a
point mass, but not its support or singular components when it is
nontrivial.

At zero field, Auffinger and W.-K.\ Chen
\cite{AuffingerChen2015Properties} proved that zero belongs to the support,
that the distribution function is smooth on intervals contained in the
support, and that neither one nor two support points suffice at sufficiently
low temperature. Auffinger, W.-K.\ Chen, and Zeng
\cite{AuffingerChenZeng2020} proved that the zero-temperature Parisi measure
has infinitely many support points. Their finite-temperature consequence
implies, in particular, that at zero field the number of support points
exceeds every fixed bound once the temperature is sufficiently low. 

For fixed positive temperatures at zero field, Zhou \cite{Zhou2026FRSB}
proved interval support for inverse temperatures slightly above the critical
value. A recent preprint \cite{Lopatto2026} established the support description
throughout the low-temperature phase: the support is an interval beginning
at zero, the measure has a smooth density below its largest support point,
and its only atom is at that point. H.-B.\ Chen
\cite{Chen2026EndpointMass} obtained a uniform lower bound for this atom
and quantitative estimates as temperature tends to zero, together with
full-interval support for the zero-temperature measure. In the related
Ising pure \(p\)-spin models with \(p\ge 3\) and no external field, Zhou
\cite{Zhou2024Gardner,Zhou2026GardnerII} recently established the two
Gardner transitions, from replica symmetry to one-step replica symmetry
breaking and then to full replica symmetry breaking. These results give a
detailed picture of symmetry breaking at zero field, but leave open the
corresponding structure of the Parisi measure in the SK model at positive
external field.

We address this problem by proving connected support for every
$0<\beta<\infty$ and $h>0$. Whenever the measure
is not replica symmetric, its support is a nondegenerate interval contained
in \((0,1)\), and its restriction to the interior has a smooth density that
remains strictly positive up to both endpoints. Both endpoints carry atoms,
and there is no other singular component. Thus the result describes the
entire measure, not only the number of overlap levels. In contrast to the
zero-field case, the lower endpoint is strictly positive and also carries
positive mass.

\subsection{Main results}

Fix an inverse temperature $\beta>0$ and a field $h\ge0$. For an integer
$N\ge1$, let $(g_{ij})_{1\le i<j\le N}$ be independent standard Gaussian
variables.
The Sherrington--Kirkpatrick Hamiltonian is
\[
 H_N(\sigma)=\frac1{\sqrt N}\sum_{1\le i<j\le N}g_{ij}\sigma_i\sigma_j,
 \qquad \sigma\in\{-1,1\}^N.
\]
We use Gibbs weight $\exp(\beta H_N(\sigma)+h\sum_i\sigma_i)$, so $h$ is
the field in the Gibbs exponent. Write $\nu_{\beta,h}$ for the unique minimizer over probability measures
on $[0,1]$ of the Parisi functional displayed in
\eqref{eq:parisi-functional-time} below, recalling that uniqueness was proved in \cite{AuffingerChen2015Unique}. The minimizer is called replica
symmetric when it is a point mass.

\begin{samepage}
\begin{theorem}\label{thm:main}
For every $0<\beta<\infty$ and $h>0$, there exist
$0<q_-\le q_+<1$ such that
\[
 \supp\nu_{\beta,h}=[q_-,q_+].
\]
The interval is nondegenerate if and only if the minimizer is not
replica symmetric. If $q_-<q_+$, then
\[
 \nu_{\beta,h}=a_-\delta_{q_-}
       +f(q)\mathbf1_{(q_-,q_+)}\dd q+a_+\delta_{q_+},
 \qquad a_->0,\qquad a_+>\frac38.
\]
The function $f$ is smooth and strictly positive on $[q_-,q_+]$,
where smoothness at the endpoints means that every derivative has a
continuous one-sided extension from the interior.
\end{theorem}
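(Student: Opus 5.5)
The plan is to work with the Parisi PDE $\partial_t\Phi+\tfrac{\beta^2}{2}\bigl(\partial_{xx}\Phi+\nu([0,t])(\partial_x\Phi)^2\bigr)=0$ on $[0,1)\times\R$, with terminal data $\Phi(1,x)=\log\cosh x$. We write $u_k=\partial_x^k\Phi$ and let $X_t$ be the associated optimal diffusion started at $h$. We then apply the first-order optimality conditions of W.-K.\ Chen and of Jagannath--Tobasco. These say that the function
\[
 \Gamma(s)=\E\, u_1(s,X_s)^2-s
\]
satisfies $\Gamma(s)\ge \Gamma(q)$ for all $s$, with equality at every $q\in\supp\nu_{\beta,h}$. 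Differentiating along the diffusion via Itô's formula gives $\Gamma'(s)=\beta^2\E u_2(s,X_s)^2-1$ and $\Gamma''(s)=\beta^2\bigl(\beta^2\E u_3^2+2\beta^2\nu([0,s])\E u_2^3\bigr)$ up to the precise normalization. All the structural claims in the theorem are to be read off from the sign of $\Gamma''$.

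\textbf{Key strict inequality.} The central step is to show that, with $m(s)=\nu([0,s])$, the map $s\mapsto \E u_2(s,X_s)^2$ is strictly increasing whenever it is not constant in a specific way. Concretely, we want
\[
 \E u_3^2 + 2m(s)\,\E u_2^3 > 0 \quad\text{on each gap }(a,b)\text{ of the support}.
\]
On such a gap $m$ is constant, so the PDE is an explicit Hopf--Cole heat flow. The claim can then be attacked by writing $u_2=\sech^2$-type conditional quantities and using Gaussian integration by parts, together with the bounds $0<u_2\le 1$ and $m\le 1$. The intended moment inequality is $\E u_3^2> -2\E u_2^3$ in the appropriate combination, i.e.\ a strict convexity of $\Gamma$ on gaps.

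Strict convexity on a gap $(a,b)$ with $\Gamma(a)=\Gamma(b)$ minimal contradicts $\Gamma\ge\Gamma(a)$ only if we combine it with $\Gamma'(a)\ge 0\ge\Gamma'(b)$. Those two inequalities hold because $a$ and $b$ are minima approached from within the gap. Hence no gaps exist and the support is an interval $[q_-,q_+]$. The same argument applied below $q_-$ and above $q_+$ gives the endpoint information. For $h>0$ we have $\Gamma'(0)<0$, since $\E u_1(0,h)^2>0$, and this is the mechanism that forces $q_->0$. The bound $q_+<1$ is standard, from $\E u_2^2<1$ near $t=1$.

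\textbf{Density and atoms.} On $(q_-,q_+)$ the condition $\Gamma'\equiv0$ gives $\beta^2\E u_2^2\equiv1$. Differentiating once more gives $\E u_3^2+2m(s)\E u_2^3=0$, which we solve for
\[
 m(s)=-\frac{\E u_3^2}{2\E u_2^3}.
\]
The right side is smooth in $s$ by the regularity of Auffinger--Chen, so $f=m'$ is smooth with one-sided extensions at the endpoints. Differentiating $m$ further and using the strict inequality again, now in the form of a strict monotonicity of the ratio, shows $f>0$ up to both endpoints. At $q_\pm$ the identity determines the one-sided limits $m(q_-^+)$ and $m(q_+^-)$. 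The atom $a_-=m(q_-)>0$ follows because $m(q_-^-)=0$ while the formula gives a positive limit. The atom $a_+=1-m(q_+^-)$ comes from bounding the ratio: using $u_2\le1$ and a fourth-moment-to-third-moment comparison, we aim for $m\le 5/8$ strictly on the support, which gives $a_+>3/8$. If the minimizer is replica symmetric, the support is a single point, which yields the nondegeneracy equivalence.

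\textbf{Main obstacle.} The hardest step is establishing the strict moment inequality on gaps and near endpoints at positive field, where $X$ is not symmetric. We plan to try first to express $u_3$ and $u_2$ through the tilted Gaussian measure of the Hopf--Cole representation and to use a Cauchy--Schwarz-type argument with the identity $u_3=-2u_1u_2+\text{martingale term}$. The quantitative constant $3/8$ will also require care.
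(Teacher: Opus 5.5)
Your central inequality has the wrong sign, and once the sign is corrected it is not something you can prove. Since $C=U_{xx}$ satisfies $C_t+\frac12C_{xx}+mBC_x=-mC^2$, It\^o's formula gives $Q''=S'=\E C_x^2-2m\E C^3$. In your variables this reads $\Gamma''=\beta^4(\E u_3^2-2m\E u_2^3)$, with a minus sign.

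With your plus sign the ``key inequality'' is trivial because $u_2>0$, and your formula $m=-\E u_3^2/(2\E u_2^3)$ would make the distribution function negative. With the correct sign the bracket has no definite sign for a constant coefficient. For example, at $t=L$ with $m=1$ the integrand $C_x^2-2C^3$ equals $\sech^4x\,(6\tanh^2x-2)$, which is negative near $x=0$.

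The variational condition is also misquoted. The support lies in the global minima of $\mathscr V(q)=\frac L2\int_q^1\Gamma(s)\dd s$, not of $\Gamma$. At the ends of a gap $(a,b)$ this gives $\Gamma(a)=\Gamma(b)=0$, $\Gamma'(a),\Gamma'(b)\le0$, and $\int_a^b\Gamma\dd s=0$. These conditions by themselves force $\Gamma''\le0$ somewhere in the gap. So ``$\Gamma''>0$ on a gap'' is not a usable lemma; it merely restates that the gap cannot exist. Nothing you list (Hopf--Cole on the gap, $0<u_2\le1$, $m\le1$) can produce it. On the gap, the terminal data $U(t_+,\cdot)$ is set by the unknown measure above, and the law of $X$ is set by the measure below, so nothing there is explicit. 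Relatedly, $q_->0$ comes from $\Gamma(0)=u_1(0,h)^2>0$, i.e.\ $\mathscr V'(0)<0$, not from $\Gamma'(0)<0$.

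The missing idea is to prove something weaker: a statement about $Q'''$ only at points where $Q''$ vanishes. Set $N=Q-tS$, so that $(Q/t)'=-N/t^2$. The gap conditions give $N(t_\pm)\ge0$ and $\int_{t_-}^{t_+}N/t^2\dd t=0$. Hence $N$ has a negative interior minimum, where $Q''=F-2m_0G=0$ and $Q'''=J(t;m_0)=\E(C_{xx}^2-12m_0CC_x^2+6m_0^2C^4)\le0$. The paper's key theorem rules this out: $N(t)\le0$ and $F=2m_0G$ imply $J(t;m_0)>0$.

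In that theorem $m_0$ is pinned to $F/(2G)$, and the hypothesis $N\le0$ is exactly what handles the positive-field asymmetry you flag. Comparing $|Y|$ with reflected Brownian motion, and interpolating the normalized profile toward a Gaussian via $A^{1-\varepsilon}e^{-\varepsilon v^2}$, it bounds the tilt $\alpha=h/(\lambda t)$ in terms of $d=1/(\lambda^2t)$. These bounds feed integration-by-parts moment inequalities under the $C^2$-weighted law of $\lambda|Y_t|$. That law's density ratio to the Neumann heat kernel has negative logarithm nondecreasing and concave in $x^2$.

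The same inequality is what gives $f>0$, which your convexity statement cannot, since $Q''\equiv0$ on the support. Differentiating $F=2mG$ gives $2Gm'=J(t;m(t))$, and $N=0$ there, so the theorem applies, including at the endpoints with one-sided values of $m_0$.

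Finally, $a_+>3/8$ does not follow from $u_2\le1$. At $t_+$ the profile is $\log\cosh$, so $m(t_+-)=2\E[\tanh^2\sech^4]/\E[\sech^6]$ evaluated at $Y_{t_+}$. This is large whenever the law of $Y_{t_+}$ sits away from the origin, so no pointwise bound on $u_2$ controls it. The paper gets $m(t_+-)<5/8$ from the lower bound on $y=\E_\rho j/\E_\rho a$ produced by that same integration by parts, which applies because $N(t_+)=0$.
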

\end{samepage}
The proof is given in \zcref{sec:structure}. 
The same methods also give another proof of the zero-field support description shown in a recent preprint \cite{Lopatto2026}, as discussed in \zcref{app:zero-field}. 

\subsection{Ideas of the proof}

The main challenge of the proof is finding the right observables and inequalities to extend the zero-field approach of \cite{Lopatto2026} to positive external field. These were located using a massive number of numerical experiments orchestrated by large language models. Due to this approach to proof discovery, the resulting argument is heavy with computation; this is most evident in the fourth step of the outline below.

We will prove an inequality that yields both connected support and positivity of
the density. On a proposed gap, it forces a third derivative to be positive
where the variational conditions force it to be nonpositive. On an interval
of support, it instead forces the distribution function to have positive
derivative. 

Write $L=\beta^2$ and $m(t)=\nu_{\beta,h}([0,t/L])$. The variable $t=Lq$ is
a rescaled overlap coordinate, used as the time parameter of an auxiliary
diffusion. The coefficient $m$ determines the
Parisi PDE solution $U$ and the diffusion $Y$ through
\[
 \begin{aligned}
 U_t&=-\frac12(U_{xx}+m(t)U_x^2),& U(L,x)&=\log\cosh x,\\
 \dd Y_t&=m(t)U_x(t,Y_t)\dd t+\dd W_t,&Y_0&=h,
 \end{aligned}
\]
where $W$ is standard Brownian motion. Thus the external-field parameter
$h$ in the Gibbs weight is the starting point of $Y$. Put $B=U_x$ and
$C=U_{xx}$. In expectations below, spatial derivatives are evaluated at
$(t,Y_t)$ unless another argument is specified.

\medskip
\emph{Step 1: Reduce a gap to strict positivity of $Q'''$.}
Define
\[
 Q(t)=\E B(t,Y_t)^2,\qquad S(t)=\E C(t,Y_t)^2=Q'(t).
\]
A gap in the support makes the cumulative distribution function $m$
constant on the corresponding time interval. If its endpoints are
$t_-,t_+$, the variational conditions stated below give
$Q(t_-)/t_-=Q(t_+)/t_+=1/L$ and $S(t_-),S(t_+)\le1/L$.
Define
\[
 N(t)=Q(t)-tS(t).
\]
Since $Q'=S$,
\[
 \left(\frac{Q(t)}t\right)'
 =\frac{tS(t)-Q(t)}{t^2}
 =-\frac{N(t)}{t^2}.
\]
The endpoint conditions on $S$ imply
$N(t_-),N(t_+)\ge0$, while the equality of $Q(t)/t$ at the two
endpoints gives
\[
 \int_{t_-}^{t_+}\frac{N(t)}{t^2}\dd t=0.
\]
Unless $N$ vanishes identically on the gap, it must be negative
somewhere in its interior. Since it is nonnegative at both endpoints,
it  attains a negative interior minimum.

On the gap, using that $m$ is constant gives the differentiation formulas
\[
 Q''=\E(C_x^2-2mC^3),\qquad
 Q'''=J(t;m)=\E(C_{xx}^2-12mCC_x^2+6m^2C^4).
\]
At an interior minimum of $N$, the identities $N'=-tQ''$ and
$N''=-Q''-tQ'''$ give $Q''=0$ and $Q'''\le0$. It is thus enough to prove
\[
 N(t)\le0,\qquad Q''(t)=0\quad\Longrightarrow\quad J(t;m)>0.
\]
This implication is a special case of \zcref{thm:derivative-moments}.  The remaining steps establish the strict inequality
and explain how it also determines the density on the support.
The variational conditions and differentiation formulas are proved in
\zcref{sec:variation}.

\medskip
\emph{Step 2: Normalize the spatial derivatives and control the Gaussian parameters.}
Fix a positive time $t$. We divide $C$ by its value at the origin and choose
a spatial scale so that the normalized function $A$ satisfies $A(0)=1$
and its logarithmic derivative $u=-A'/(2A)$ satisfies $u'(0)=1$. Explicitly,
\[
 c_0=C(t,0),\qquad \lambda^2=-\frac{C_{xx}(t,0)}{2c_0},\qquad
 A(v)=\frac{C(t,v/\lambda)}{c_0},\qquad u(v)=-\frac{A'(v)}{2A(v)}.
\]
Here $v=\lambda x$; primes on $A$ and $u$ denote derivatives in $v$.
Our goal is to show that, if $N(t)\le0$, then the Gaussian comparison
below cannot have too large a tilt parameter. We normalize the spatial
profile so that this restriction can be expressed using two dimensionless
parameters.
The estimates in \zcref{sec:profiles} give $\lambda>0$ and, for $v\ge0$,
\[
 0<u'\le1,\qquad \tanh v\le u(v)\le v,\qquad vu'(v)\le u(v),
 \qquad e^{-v^2}\le A(v)\le\sech^2v.
\]
The normalization puts solutions for different coefficients $m$ on a common
scale. The inequality $vu'\le u$ is preserved by the interpolation used
below, while the fourth inequality in the previous display 
places $A$ between the Gaussian profile and the terminal profile of the PDE.

We now make two comparisons. The first replaces the law of $Y_t$ by an
explicit Gaussian law while keeping the spatial functions $B$ and $C$
fixed. The drift of $|Y|$ points away from zero, so comparison with
reflected Brownian motion and monotonicity of
$B(t,x)^2-tC(t,x)^2$ for $x\ge0$ give
\[
 N(t)\ge\E\bigl[B(t,h+\sqrt t Z)^2-tC(t,h+\sqrt t Z)^2\bigr],
 \qquad Z\sim\mathcal N(0,1).
\]
Thus $N(t)\le0$ forces the expectation on the right to be nonpositive.
We will show that it becomes strictly positive once $\alpha$ is large enough. 
For the corresponding normalized Gaussian, set
\[
 \alpha=\frac h{\lambda t},\qquad d=\frac1{\lambda^2t}.
\]
Then
\[
 \lambda(h+\sqrt t Z)\sim\mathcal N\left(\frac\alpha d,\frac1d\right).
\] 
The density of the absolute value of this Gaussian is proportional to
$e^{-dv^2/2}\cosh(\alpha v)$ on $v\ge0$. Thus $d$ is its reciprocal variance, while $\alpha$ is the coefficient of
$v$ in its exponential density. We refer to $\alpha$ as the tilt parameter.

This Gaussian expectation still involves the unknown function $A$ and its
primitive $I(v)=\int_0^v A(y)\dd y$. As we show below, it has the opposite sign to
\[
 \mathcal E_\alpha[A]=\int_0^\infty
       (A(v)^2-dI(v)^2)e^{-dv^2/2}\cosh(\alpha v)\dd v.
\]
The second comparison replaces the unknown profile $A$ by the explicit
Gaussian profile $A_G(v)=e^{-v^2}$ while preserving the sign information
needed above. We interpolate between them by
\[
 A_\varepsilon=A^{1-\varepsilon}e^{-\varepsilon v^2},
 \qquad 0\le\varepsilon\le1.
\]
For $0<\alpha\le1$, we prove that every zero of
$\varepsilon\mapsto\mathcal E_\alpha[A_\varepsilon]$ is crossed in the
increasing direction:
\[
 \mathcal E_\alpha[A_\varepsilon]=0
 \quad\Longrightarrow\quad
 \frac{\dd}{\dd\varepsilon}\mathcal E_\alpha[A_\varepsilon]>0.
\]
Therefore a negative value at the Gaussian endpoint
$A_1=A_G$ forces a negative value at the original profile $A_0=A$.
The proof uses the preserved inequality $vu'\le u$ to show that the
derivative kernel changes sign exactly once; see
\zcref{sec:gaussian-comparison}.

For an explicit absolute constant $\kappa\in(0,1)$ specified in
\zcref{sec:gaussian-comparison}, define the threshold 
$\alpha_*(d)>0$ by
\[
 \alpha_*(d)^2=\min\{\kappa,\sqrt{\kappa d}\}.
\]
The Gaussian calculation gives
$\mathcal E_{\alpha_*(d)}[A_G]<0$, and the interpolation therefore gives
$\mathcal E_{\alpha_*(d)}[A]<0$. Since the Gaussian expectation above
has the opposite sign from $\mathcal E_{\alpha}[A]$, it is strictly
positive at $\alpha=\alpha_*(d)$. It is strictly increasing in $\alpha$,
so it remains positive for every $\alpha\ge\alpha_*(d)$.

But $N(t)\le0$ forces this Gaussian expectation to be nonpositive.
Hence $\alpha<\alpha_*(d)$.
Using the definition of $\alpha_*(d)$, this is equivalent to
\[
 N(t)\le0\quad\Longrightarrow\quad
 \theta=\alpha^2<\kappa,\qquad d>\frac{\theta^2}{\kappa}.
\]
These are conditional bounds on rescaled
combinations of $h$ and $t$ at the time under consideration; $h>0$ itself
is arbitrary. Their role is to control the terms introduced by the nonzero
starting point of $Y_t$ in the later integration-by-parts estimates.

\medskip
\emph{Step 3: Estimate the weighted diffusion density.}
After expressing $C_x$ and $C_{xx}$ in the normalized variables,
the integrand defining $J(t;m)$ has a common factor $C(t,Y_t)^2$.
We absorb
it into a probability measure by setting
\[
 \int f(v)\rho(\dd v)=
 \frac{\E[C(t,Y_t)^2f(\lambda|Y_t|)]}{S(t)}.
\]
For the constant value $m$ in the gap, put
$a(v)=mC(t,v/\lambda)/\lambda^2$. The condition $Q''(t)=0$ arising at an interior minimum of $N$
has a simple form in these normalized variables:
\[
 Q''(t)=0\quad\Longleftrightarrow\quad
 \E_\rho a=2\E_\rho u^2.
\] 
At a positive time the left side cannot hold with $m=0$, since
$\E C_x(t,Y_t)^2>0$. We may therefore work with $m>0$. Both this moment
identity and the expression whose positivity gives $J>0$ are now averages
under the same measure $\rho$. The remaining problem is to control these moments under $\rho$. For this we need quantitative information about
the density of $\rho$. Its logarithmic derivative will enter the
integration-by-parts identities in the next step, while concavity of the
non-Gaussian correction will give the sign needed to bound the resulting
variance terms.

To estimate its density, let $p_h(\sigma,x)$ be the density of $Y_\sigma$.
Multiplication of the absolute-value density by $e^{-mU}$ removes the
spatial drift from its forward equation. For smooth nondecreasing $m$,
the transformed density satisfies
\[
 g(\sigma,x)=e^{-m(\sigma)U(\sigma,x)}
              [p_h(\sigma,x)+p_h(\sigma,-x)],\qquad
 g_\sigma=\frac12g_{xx}-m'(\sigma)Ug.
\]
We compare $g$ with the Neumann heat kernel
$k_h(\sigma,x)=K_\sigma(x-h)+K_\sigma(x+h)$, where
$K_\sigma(x)=(2\pi\sigma)^{-1/2}e^{-x^2/(2\sigma)}$.
On the half-line, this is the density of $|h+\sqrt\sigma Z|$.
Write $r=x^2$. Since $m'\ge0$, monotonicity of
$U(\sigma,\sqrt r)$ gives a nonnegative source in the equation for the
first $r$-derivative of $-\log(g/k_h)$. Once this derivative is
nonnegative, concavity of $U(\sigma,\sqrt r)$ and the explicit Neumann
kernel give nonpositive sources in the equation for its second
$r$-derivative. Parabolic comparison propagates these signs. Hence the
negative logarithm of $g/k_h$ is nondecreasing and concave in $x^2$.
The Brownian-bridge representation in \zcref{sec:diffusion-density}
justifies the derivative estimates and passage to general nondecreasing
coefficients.

Restoring the explicit factors and applying the $C^2$ weight gives
\[
 \begin{gathered}
 \rho(\dd v)=\mathcal Z^{-1}\omega(v)
       e^{-dv^2/2-\psi(v^2)}\cosh(\alpha v)\dd v,\\
 \omega(v)=C(t,v/\lambda)^2e^{mU(t,v/\lambda)},\qquad
 \psi'\ge0,\quad\psi''\le0.
 \end{gathered}
\]
Here $\mathcal Z$ is the normalizing constant, and $\psi$ is the negative
logarithm of the preceding ratio after spatial rescaling. Primes on
$\psi$ denote derivatives with respect to its argument $r$. The factor
$\cosh(\alpha v)$ displays the Gaussian contribution of the starting point;
$\psi$ may still depend on $h$. The derivative estimates and concavity give
a bounded coefficient $s(v)=d+2\psi'(v^2)\ge d$ with $s'(v)\le0$ for
$v\ge0$.

\medskip
\emph{Step 4: Combine the moment inequalities to prove $J>0$.}
Write
\[
 w=u^2,\qquad j=u'-a,\qquad
 \mathcal T=1-a+2w-j,\qquad M=\E_\rho a.
\]
The moment identity from Step~3 gives $\E_\rho w=M/2$. After factoring
a positive normalization, the inequality $J>0$ reduces to positivity of
a dimensionless expression $O$ involving the variances of $a$ and
$\mathcal T$, the covariance of $a$ and $w$, and the mean of $j$.

The density estimate from Step~3 gives integration-by-parts inequalities
under $\rho$. Combined with the differential inequalities for $a,j,u$,
these control the terms in $O$. We organize the remaining argument according
to
\[
 0<\mu=a(0)=\frac{mc_0}{\lambda^2}\le1.
\]
We cover this range by three cases. 
For
$0<\mu\le11/20$, two variance inequalities already give a strictly
positive lower bound for $O$. For $\mu\ge29/40$, the
integration-by-parts estimates give incompatible upper and lower bounds
on the normalized mean of $j$, so the moment identity cannot hold.

It remains to treat
\[
 \frac{11}{20}\le\mu\le\frac{29}{40}.
\]
In this range we combine five explicitly nonnegative quantities obtained
from variances, covariance inequalities, and the pointwise differential
bounds. With suitable positive coefficients, their sum reduces the desired
estimate to
\[
 O\ge
 \mathfrak a\theta^2+\mathfrak b\theta+\mathfrak c,
 \qquad \mathfrak a>0.
\]
The exact algebra in \zcref{app:arithmetic} verifies
\[
 4\mathfrak a\mathfrak c-\mathfrak b^2>0
\]
throughout this intermediate range. Hence $O>0$ and therefore
$J(t;m)>0$. Sections~\ref{sec:weighted-moments}--\ref{sec:positivity}
give the detailed moment inequalities and algebraic combination.

\medskip
\emph{Step 5: Recover the density and endpoint atoms.}
Once gaps have been excluded, suppose that the support is the
nondegenerate interval $[q_-,q_+]$, and set
\[
 t_-=Lq_-,\qquad t_+=Lq_+.
\]
Every point of this interval is a support point, so the variational
characterization in \zcref{sec:variation} gives
\[
 Q(t)=\frac{t}{L}\qquad (t_-\le t\le t_+).
\]
In particular $Q''=0$ in the interior. Since
\[
 Q''=F-2mG,\qquad
 F(t)=\E C_x(t,Y_t)^2,\qquad
 G(t)=\E C(t,Y_t)^3>0,
\]
we obtain
\[
 m(t)=\frac{F(t)}{2G(t)}
 \qquad (t_-\le t<t_+).
\]
This identity already gives regularity of the measure. The functions
$F$ and $G$ are Lipschitz, and $G$ is bounded away from zero on
$[t_-,t_+]$, so the quotient $F/(2G)$ is Lipschitz. Hence the
distribution function $m$ is Lipschitz in the interior of the support:
there are no interior atoms or singular continuous mass. Adapting the
induction in \cite[Theorem~2(ii)]{AuffingerChen2015Properties} and iterating
the differentiated moment identities then gives smoothness, with one-sided
derivatives at $t_-$ and $t_+$.

It remains to show that this density is strictly positive. Differentiating
$F=2mG$ gives
\[
 2G(t)m'(t)
 =F'(t)-2m(t)G'(t)
 =J(t;m(t)),
\]
because $Q'''=0$ on the support. Moreover,
\[
 N(t)=Q(t)-tQ'(t)=0
\]
there, and $F-2mG=0$. Thus \zcref{thm:derivative-moments} applies and
gives
\[
 J(t;m(t))>0.
\]
Since $G(t)>0$, it follows that
\[
 m'(t)>0,
\]
and therefore the density in the original overlap coordinate,
\[
 f(q)=Lm'(Lq),
\]
is strictly positive. Thus the same inequality that ruled out gaps in
Step~1 now shows that the distribution function increases strictly
throughout its support.

We also need the corresponding one-sided statements at the endpoints.
For this purpose, \zcref{thm:derivative-moments} allows the coefficient
appearing in $F-2m_0G$ and $J(t;m_0)$ to be a fixed value $m_0$, while
leaving the PDE solution and diffusion unchanged, provided
\[
 m\le m_0\quad\text{before }t,\qquad
 m\ge m_0\quad\text{after }t.
\]
At $t_-$ and $t_+$ we take $m_0$ to be the value approached from the
interior of the support. This gives $J>0$ at both endpoints and hence
strictly positive one-sided limits of the density.

Finally, these endpoint values also identify the atoms. Since $t_->0$,
the diffusion law has positive density and $C_x(t_-,\cdot)$ is not
identically zero, so
\[
 F(t_-)>0.
\]
The quotient formula therefore gives $m(t_-)>0$. There is no mass below
$q_-$, so
\[
 \nu_{\beta,h}(\{q_-\})=m(t_-)>0.
\]
At the upper endpoint, write
\[
 m_+=m(t_+-).
\]
Because $m(t)=1$ for $t>t_+$, the Parisi PDE can be solved explicitly
above $t_+$, giving
\[
 U(t_+,x)=\log\cosh x+\frac{L-t_+}{2}.
\]
Substituting this terminal profile into the moment bounds reduces the
problem to a scalar inequality for $m_+$, from which we obtain
$m_+<5/8$. Hence
\[
 \nu_{\beta,h}(\{q_+\})=1-m_+>\frac38.
\]
The details of the regularity and endpoint calculations are given in
\zcref{sec:structure}.

\subsection{Acknowledgments}

P.L. was partially supported by NSF grant DMS-2450004. This paper was written
by the author with assistance from large language models. That assistance
included suggestions for arguments, help with drafting and revision, and
code for computational checks and experiments.

\section{The variational reduction}\label{sec:variation}

This section states the variational conditions and moment identities in the rescaled
time coordinate used to exclude support gaps. Brownian motions are defined
with the usual augmentation of their natural filtrations.

Fix $\beta>0$ and $h\ge0$, and let $\nu$ be a Borel probability measure
on $[0,1]$. Put $L=\beta^2$ and $m(t)=\nu([0,t/L])$ for $0\le t\le L$.
We use the solution of
\begin{equation}\label{eq:parisi-time}
 U_t=-\frac12(U_{xx}+m(t)U_x^2),\qquad
 U(L,x)=\log\cosh x,
\end{equation}
and the diffusion
\begin{equation}\label{eq:optimal-diffusion}
 \dd Y_t=m(t)U_x(t,Y_t)\dd t+\dd W_t,\qquad Y_0=h.
\end{equation}
The regularity facts used below are proved in \zcref{lem:pde-regularity}.
In particular, the drift is bounded and globally Lipschitz in space.
The PDE and diffusion depend on $m$ only through its almost-everywhere values.

Write $B=U_x$, $C=U_{xx}$, and set
\begin{equation}\label{eq:moment-QSFG}
 Q(t)=\E B(t,Y_t)^2,\quad S(t)=\E C(t,Y_t)^2,\quad
 F(t)=\E C_x(t,Y_t)^2,\quad G(t)=\E C(t,Y_t)^3.
\end{equation}
Constants in estimates for the spatial derivatives may depend on $L$
and the fixed derivative order, but are uniform over coefficients taking
values in $[0,1]$. The functions $F,G,S$ are Lipschitz on $[0,L]$, and $G>0$.
For a fixed value $m_0$, expressions such as $J(t;m_0)$ below always use
the PDE solution and diffusion associated with the original coefficient $m$.

The following identities are the stochastic differentiation formulas of
\cite[Lemma~2]{AuffingerChen2015Unique} and
\cite[Proposition~3]{AuffingerChen2015Properties}, together with their
immediate higher-order consequences. The SK third-derivative formula is
also recorded in \cite[Lemma~2.5]{Lopatto2026}. We include the calculation
in the time coordinate $t=\beta^2q$.

\begin{lemma}\label{lem:ito-moments}
For every nondecreasing measurable $m:[0,L]\to[0,1]$,
\begin{align}
 Q'&=S,\label{eq:ito-Q}\\
 S'&=F-2mG,\label{eq:ito-S}\\
 F'&=\E(C_{xx}^2-6mCC_x^2),\label{eq:ito-F}\\
 G'&=3\E(CC_x^2-mC^4)\label{eq:ito-G}
\end{align}
in integral form. More precisely,
\[
 Q\in C^1([0,L]),\qquad S,F,G\in W^{1,\infty}([0,L]),
\]
and \eqref{eq:ito-Q} holds pointwise in the interior, with one-sided
interpretation at the endpoints, while \eqref{eq:ito-S}--\eqref{eq:ito-G}
hold almost everywhere. 
If $m=m_0$ almost everywhere on an open interval $I\subset(0,L)$, then
$Q$ is smooth on $I$ and
\begin{equation}\label{eq:ito-J}
 Q'''(t)=J(t;m_0)
 =\E(C_{xx}^2-12m_0CC_x^2+6m_0^2C^4),
 \qquad t\in I.
\end{equation}
More generally, put $D_j=\partial_x^jU$, $j\ge1$. For every integer
$k\ge1$ and every polynomial $P$ in $k$ variables, the moment
\[
 M_P(t)=\E\bigl[P(D_1(t,Y_t),\ldots,D_k(t,Y_t))\bigr]
\]
is Lipschitz on $[0,L]$. There are polynomials $P_0,P_1$ in at most
$k+1$ variables, determined by It\^o's formula and
\eqref{eq:all-derivative-ito}, such that
\[
 M_P'=M_{P_0}+mM_{P_1}
\]
almost everywhere.
\end{lemma}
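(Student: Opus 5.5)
The plan is to obtain every identity from one mechanism: a moment $\E[\varphi(t,Y_t)]$ of a smooth function evaluated along the diffusion has derivative $\E[(\partial_t+\mathcal L_t)\varphi](t,Y_t)$, where $\mathcal L_t=\frac12\partial_{xx}+m(t)B\,\partial_x$ is the generator of \eqref{eq:optimal-diffusion}. The stochastic integrals are true martingales because all spatial derivatives are bounded by \zcref{lem:pde-regularity}. I would first do the computation for smooth (or piecewise constant) nondecreasing $m$, where the PDE holds classically, and then pass to general measurable $m$ by approximation.

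\textbf{Equations for the derivatives.} Differentiating \eqref{eq:parisi-time} $j$ times in $x$ gives
\[
 \partial_tD_j=-\tfrac12\Bigl(D_{j+2}+m\,\partial_x^{j}(D_1^2)\Bigr),
 \qquad
 \partial_x^{j}(D_1^2)=\sum_{i=0}^{j}\binom{j}{i}D_{i+1}D_{j-i+1}.
\]
The term $2D_1D_{j+1}$ in this sum cancels exactly the transport part $mB\,\partial_x D_j$ of $\mathcal L_t$. This yields the general rule recorded as \eqref{eq:all-derivative-ito}:
\[
 (\partial_t+\mathcal L_t)D_j=-\tfrac m2\sum_{i=1}^{j-1}\binom{j}{i}D_{i+1}D_{j-i+1}.
\]
The cases I need are:
\begin{itemize}
 \item $B=D_1$ has zero drift, so $B(t,Y_t)$ is a martingale;
 \item $C=D_2$ has drift $-mC^2$;
 \item $C_x=D_3$ has drift $-3mCC_x$.
\end{itemize}
For a polynomial $P$, Itô's formula gives a drift $\sum_j \partial_jP\cdot(\text{drift of }D_j)+\frac12\sum_{j,l}\partial_{jl}P\,D_{j+1}D_{l+1}$. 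This splits as $P_0+mP_1$, with $P_0,P_1$ polynomials in $D_1,\dots,D_{k+1}$, which proves the general statement.

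\textbf{The specific identities.} Specializing:
\begin{itemize}
 \item $d(B^2)$ has drift $C^2$, giving \eqref{eq:ito-Q};
 \item $d(C^2)$ has drift $-2mC^3+C_x^2$, giving \eqref{eq:ito-S};
 \item $d(C_x^2)$ has drift $-6mCC_x^2+C_{xx}^2$, giving \eqref{eq:ito-F};
 \item $d(C^3)$ has drift $3C^2(-mC^2)+3CC_x^2$, giving \eqref{eq:ito-G}.
\end{itemize}
If $m=m_0$ on $I$, then $Q''=S'=F-m_0\cdot 2G$ there, and
\[
 F'-2m_0G'=\E\bigl(C_{xx}^2-6m_0CC_x^2-6m_0CC_x^2+6m_0^2C^4\bigr),
\]
which is \eqref{eq:ito-J}. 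Smoothness of $Q$ on $I$ follows by iterating the general rule: each moment there has derivative another moment of the same type, and all such moments are continuous.

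\textbf{Regularity and approximation.} The main obstacle is justifying these formulas for merely measurable $m$, where $U$ is only Lipschitz in $t$. I would approximate $m$ in $L^1$ by smooth nondecreasing $m_n$ with values in $[0,1]$. Using the uniform derivative bounds and stability of the Parisi PDE in $L^1$ of the coefficient (from \zcref{lem:pde-regularity}), the derivatives $D_j^{(n)}$ converge locally uniformly to $D_j$. Since the drifts are uniformly Lipschitz, $Y^{(n)}\to Y$ in $L^2$ uniformly in time. The integrated identities
\[
 M_P(t)-M_P(s)=\int_s^t\bigl(M_{P_0}+mM_{P_1}\bigr)\dd r
\]
then pass to the limit by dominated convergence. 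This yields the Lipschitz property, and hence $W^{1,\infty}$ membership, with the derivatives holding almost everywhere. Finally, $Q\in C^1$ with pointwise $Q'=S$ because $S$ is continuous. Positivity of $G$ follows from $C=U_{xx}>0$, which is also part of \zcref{lem:pde-regularity}.
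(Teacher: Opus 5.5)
Your proposal is correct and follows essentially the same route as the paper. It derives the drift rule \eqref{eq:all-derivative-ito} by It\^o's formula along $Y$, specializes it to $B^2,C^2,C_x^2,C^3$ (with the $J$ computation), and passes the integral identities to general $m$ by $L^1$ approximation, stability from \zcref{lem:pde-regularity}, a coupled Gronwall argument and dominated convergence, then bootstraps all polynomial moments simultaneously on a constant-$m$ interval. The only difference is cosmetic: the paper approximates by nondecreasing step functions, applying It\^o between jumps and using continuity of the spatial derivatives across jump times, whereas your smooth nondecreasing approximants work equally well.
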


\begin{proof}
For $\mathscr L=\partial_t+(1/2)\partial_{xx}+mB\partial_x$,
differentiating \eqref{eq:parisi-time} in space gives
\[
 \mathscr L B=0,\qquad \mathscr L C=-mC^2,
 \qquad \mathscr L C_x=-3mCC_x.
\]
Suppose first that $m$ is a step function with finitely many jumps. On each
open interval between successive jumps, the differentiated integral
PDE in \zcref{lem:pde-regularity} has continuous integrands. Thus the
spatial derivatives of $U$ are $C^1$ in time and smooth in space there,
which gives the $C^{1,2}$ regularity required for classical It\^o's
formula on compact subintervals.  The spatial-derivative bounds make all
stochastic integrals below square-integrable martingales. Passing to the
interval endpoints and summing gives the integral identities on $[0,L]$,
since the spatial derivatives of $U$ are continuous across the partition
times. Applying It\^o's formula to $B^2,C^2,C_x^2,C^3$ then gives
\eqref{eq:ito-Q}--\eqref{eq:ito-G} for step functions.

To obtain integral identities for polynomial moments of the spatial
derivatives, set $D_j=\partial_x^jU$, $j\ge1$. Differentiating the
Parisi PDE and applying It\^o's formula along $Y$ gives
\begin{equation}\label{eq:all-derivative-ito}
 \begin{aligned}
 \dd D_j(t,Y_t)={}&-\frac{m(t)}2
 \sum_{\ell=1}^{j-1}\binom j\ell
 D_{\ell+1}(t,Y_t)D_{j-\ell+1}(t,Y_t)\dd t\\
 &+D_{j+1}(t,Y_t)\dd W_t,
 \end{aligned}
\end{equation}
where the sum is empty for $j=1$. For a polynomial $P$ in
$D_1,\ldots,D_k$, the first-derivative terms in It\^o's formula
contain one explicit factor of $m(t)$, while the quadratic-covariation
terms contain products $D_{i+1}D_{j+1}$ and no explicit factor of $m(t)$.
Thus the drift has the form $P_0+m(t)P_1$, where $P_0,P_1$ are
polynomials in $D_1,\ldots,D_{k+1}$. The bounds in
\zcref{lem:pde-regularity} make the stochastic integrals square-integrable
martingales and justify the corresponding expectation identities in
integral form.

Approximate a general nondecreasing $m$ in $L^1$ by nondecreasing step
functions. The bounds on spatial derivatives and their stability imply
uniform convergence of the spatial functions. Drive the approximating
diffusions $Y^n$ and $Y$ by the same Brownian motion. Since $|B|\le1$ and
$|B_x|\le1$, Gronwall's inequality gives
\[
 \sup_{t\le L}|Y_t^n-Y_t|
 \le e^L\left(\|m_n-m\|_{L^1[0,L]}
       +L\sup_{t,x}|B_n(t,x)-B(t,x)|\right)\longrightarrow0.
\]
The moment integrands are bounded, so dominated convergence proves their
integral identities, including at jump times. The same approximation applies to every fixed polynomial in the $D_j$.
The stability estimates for the spatial derivatives, together with
$Y^n\to Y$ uniformly, give convergence of the corresponding polynomial
evaluations along the diffusions. Their drift terms converge in $L^1$,
while the stochastic-integral terms converge in $L^2$ by the It\^o
isometry. Passing to the limit in the integral semimartingale identities
therefore gives \eqref{eq:all-derivative-ito} and the corresponding
polynomial-moment identities for general $m$. 
The continuity of the spatial derivatives in time and space, together
with continuity of $Y$ and their uniform bounds, implies that $S$ is
continuous. Hence the integral identity for $Q$ gives
$Q\in C^1([0,L])$ with $Q'=S$. The bounded right-hand sides in the other
integral identities give
$S,F,G\in W^{1,\infty}([0,L])$. 

On a constant-$m$ interval, the drift of every polynomial moment is a
finite linear combination of polynomial moments, possibly involving one
higher spatial derivative. All such moments are initially continuous by
the spatial-derivative bounds and continuity of $Y$. Hence their integral
identities make all polynomial moments $C^1$ simultaneously. Repeating
the same argument, if all polynomial moments are $C^r$, then the drift of
each is $C^r$, so all are $C^{r+1}$. Thus every polynomial moment is
smooth on the interval. In particular, $Q$ is smooth there.
Differentiating $Q''=F-2mG$ and using
\eqref{eq:ito-F}--\eqref{eq:ito-G} gives \eqref{eq:ito-J}.
\end{proof}

In the rescaled variable, the Parisi functional is
\begin{equation}\label{eq:parisi-functional-time}
 \mathcal P_{\beta,h}(\nu)=\log2+U(0,h)
 -\frac1{2L}\int_0^L tm(t)\dd t.
\end{equation}
We next recall the variational characterization of
\cite[Theorem~2 and Proposition~1]{Chen2017Variational}, obtained
independently in
\cite[Proposition~1.1 and Section~3]{JagannathTobasco2017}.
The fixed-point identity at support points goes back to
\cite[Proposition~3.2]{Talagrand2006ParisiMeasures}; see also
\cite[Theorem~5]{AuffingerChen2015Properties}. We give the specialization
needed here, including the endpoint consequences.

\begin{lemma}\label{lem:variational-support}
Let $h\ge0$. If $\nu$ minimizes \eqref{eq:parisi-functional-time}, then its support
is contained in the set of global minima of
\begin{equation}\label{eq:variational-potential}
 \mathscr V(q)=\frac12\int_{Lq}^L\bigl(Q(t)-t/L\bigr)\dd t,
 \qquad 0\le q\le1.
\end{equation}
Then $1\notin\supp\nu$, and if $h>0$, also $0\notin\supp\nu$.
At each $q\in\supp\nu\cap(0,1)$,
\begin{equation}\label{eq:support-self-consistency}
 Q(Lq)=q,\qquad S(Lq)\le L^{-1}.
\end{equation}
\end{lemma}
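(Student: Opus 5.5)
We will take the first-order optimality condition of Chen and of Jagannath--Tobasco as given. In their form, for a minimizer $\nu$ the function
\[
 \Gamma(q)=\int_q^1\bigl(\E U_x(Ls,Y_{Ls})^2-s\bigr)\dd s
\]
satisfies $\Gamma(q)\ge\Gamma(q')$ for every $q\in\supp\nu$ and every $q'\in[0,1]$. Equivalently, after the substitution $t=Ls$, the support lies in the set of global minima of $\mathscr V$. The plan is to carry out this change of variables, checking that the factor $1/2$ and the scaling $L=\beta^2$ agree with the normalization \eqref{eq:parisi-time} and \eqref{eq:parisi-functional-time}. Here $\mathscr V(q)=\frac12\int_{Lq}^L(Q(t)-t/L)\dd t$, and the integrand $Q(t)-t/L$ is, up to the factor $L$, the first variation density $\E B^2-q$. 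This gives the first assertion.

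For the interior conditions, take $q\in\supp\nu\cap(0,1)$ and set $t=Lq$. By \zcref{lem:ito-moments}, $Q\in C^1([0,L])$, so $\mathscr V$ is $C^2$ in $q$ with
\[
 \mathscr V'(q)=-\tfrac L2\bigl(Q(Lq)-q\bigr),\qquad \mathscr V''(q)=-\tfrac{L^2}2\bigl(S(Lq)-L^{-1}\bigr).
\]
The second formula is valid pointwise because $Q'=S$ is continuous. At an interior global minimum we have $\mathscr V'(q)=0$ and $\mathscr V''(q)\ge0$. These give $Q(Lq)=q$ and $S(Lq)\le L^{-1}$, which is \eqref{eq:support-self-consistency}.

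For the endpoints, we argue by contradiction in each case.

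\emph{The point $1$.} The terminal condition gives $B(L,x)=\tanh x$, so $Q(L)=\E\tanh^2(Y_L)<1$. Hence $\mathscr V'(1)=-\frac L2(Q(L)-1)>0$. Since $\mathscr V(1)=0$, this gives $\mathscr V(q)<0=\mathscr V(1)$ for $q$ slightly below $1$. So $1$ is not a global minimum and $1\notin\supp\nu$.

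\emph{The point $0$ when $h>0$.} We have $Q(0)=B(0,h)^2$. Since $B(0,\cdot)$ is odd and strictly increasing (because $C>0$), $h>0$ gives $Q(0)>0$. Therefore $\mathscr V'(0)=-\frac L2 Q(0)<0$, so $\mathscr V$ strictly decreases near $0$, and $0$ is not a minimizer. We will verify the two facts used here, $C>0$ and $B(0,\cdot)$ odd, from the representation in \zcref{lem:pde-regularity} or by comparison: the terminal data $\log\cosh$ is even and strictly convex, and both properties propagate backward.

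The main obstacle is making the appeal to \cite{Chen2017Variational} exact. That paper works in the $q$ variable with the $\beta^2\xi''$ normalization, so we must match constants and make sure the minimality statement covers all global minima, including the boundary points. Once this is checked, the rest is elementary calculus using the $C^1$ regularity of $Q$ from \zcref{lem:ito-moments}.
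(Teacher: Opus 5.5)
Your proposal is correct. It differs from the paper only in how the first assertion is obtained.

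You import the first-order optimality condition of Chen and Jagannath--Tobasco and translate it via $t=Ls$. With $\xi''\equiv L$, your $\Gamma$ is exactly $(2/L)\mathscr V$, so the minima agree. The paper instead derives the condition from scratch in its own normalization:
\begin{itemize}
\item It perturbs to $\nu_\varepsilon=(1-\varepsilon)\nu+\varepsilon\widetilde\nu$.
\item It writes $(U_\varepsilon-U)/\varepsilon$ as the solution of a linear PDE with a stochastic representation, justified by step-function approximation and the stability estimate \eqref{eq:profile-stability}.
\item It lets $\varepsilon\downarrow0$, coupling the auxiliary diffusion with $Y$, and obtains the one-sided directional derivative $\int\mathscr V\dd(\widetilde\nu-\nu)$.
\item Taking $\widetilde\nu=\delta_q$ gives $\mathscr V\ge\int\mathscr V\dd\nu$ everywhere, with equality $\nu$-a.e. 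Continuity of $\mathscr V$ then gives equality on $\supp\nu$.
\end{itemize}

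Your route is shorter; its cost is exactly the normalization check you flag. The paper's route is self-contained, needs only the regularity and stability estimates already proved, and avoids translating conventions. If the cited result is stated as ``the argmin set has full $\nu$-measure,'' note explicitly that this set is closed because $\mathscr V$ is continuous; that is what gives the support inclusion. Your endpoint and interior calculus matches the paper's.

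One slip to fix: as written, ``$\Gamma(q)\ge\Gamma(q')$ for $q\in\supp\nu$'' says support points are global maxima. The inequality must be reversed, as your next sentence about global minima already assumes.
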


\begin{proof}
Let $\widetilde\nu$ be a probability measure, and put
\[
 \widetilde m(t)=\widetilde\nu([0,t/L]),\qquad
 \delta m(t)=\widetilde m(t)-m(t),\qquad
 m_\varepsilon(t)=m(t)+\varepsilon\delta m(t).
\]
Let $U_\varepsilon$ correspond to
$\nu_\varepsilon=(1-\varepsilon)\nu+\varepsilon\widetilde\nu$,
and write $B_\varepsilon=\partial_xU_\varepsilon$.
Subtracting the two PDEs shows that
$V_\varepsilon=(U_\varepsilon-U)/\varepsilon$ satisfies
\[
 (V_\varepsilon)_t+\frac12(V_\varepsilon)_{xx}
 +\frac12m_\varepsilon(B_\varepsilon+B)(V_\varepsilon)_x
 =-\frac12\delta mB^2,
 \qquad V_\varepsilon(L,\cdot)=0.
\]
For $0\le s\le L$ and $x\in\R$, set
\[
 b_\varepsilon(r,x)=\frac12m_\varepsilon(r)
        [B_\varepsilon(r,x)+B(r,x)]
\]
and let
\[
 \dd Z_r^\varepsilon=b_\varepsilon(r,Z_r^\varepsilon)\dd r+\dd W_r,
 \qquad Z_s^\varepsilon=x.
\]
The coefficient is bounded and uniformly Lipschitz in space. The stochastic
representation of the linear equation is
\begin{equation}\label{eq:variation-stochastic-representation}
 V_\varepsilon(s,x)=\frac12\E
    \int_s^L\delta m(r)B(r,Z_r^\varepsilon)^2\dd r.
\end{equation}
To justify this representation, approximate $m$ and $\widetilde m$ in
$L^1$ by nondecreasing step functions on common finite partitions, form
the corresponding quantities in the display above, and apply It\^o's
formula on each interval between jump times, as in
\zcref{lem:ito-moments}. 
For each fixed $\varepsilon>0$, the stability estimate
\eqref{eq:profile-stability} gives uniform convergence of
$B_{\varepsilon,n}$ and $B_n$ to $B_\varepsilon$ and $B$, respectively.
Hence $b_{\varepsilon,n}\to b_\varepsilon$ in $L^1$ in time, uniformly
in space. Coupling the corresponding diffusions with the same Brownian
motion and applying Gronwall's inequality gives uniform convergence of
their paths. 
The source terms
$\delta m_n(r)B_n(r,\cdot)^2$ are uniformly bounded and converge in
$L^1$ in time, uniformly in space. Passing to the limit therefore gives
\eqref{eq:variation-stochastic-representation} for the original
coefficients.

From now on take $s=0$ and $x=h$ in
\eqref{eq:variation-stochastic-representation}, and write
$Z^\varepsilon$ for the resulting diffusion.
As $\varepsilon\downarrow0$, \eqref{eq:profile-stability} implies that
$b_\varepsilon\to mB$ in $L^1$ in time, uniformly in space. Coupling
$Z^\varepsilon$ with $Y$ using the same Brownian motion therefore gives
\[
 \sup_{0\le r\le L}|Z_r^\varepsilon-Y_r|\longrightarrow0.
\]
Since $B$ is uniformly Lipschitz in space,
\[
 B(r,Z_r^\varepsilon)^2\longrightarrow B(r,Y_r)^2
\]
uniformly in $r$. Using $|\delta m|\le1$ and dominated convergence in
\eqref{eq:variation-stochastic-representation} yields
\[
 V_\varepsilon(0,h)\longrightarrow
 \frac12\int_0^L\delta m(r)Q(r)\dd r.
\] 
Together with the explicit linear term in the Parisi functional, this yields
\[
 \left.\frac{d}{d\varepsilon}\mathcal P_{\beta,h}(\nu_\varepsilon)
 \right|_{\varepsilon=0+}
 =\frac12\int_0^L\bigl(Q(t)-t/L\bigr)\delta m(t)\dd t
 =\int\mathscr V\dd(\widetilde\nu-\nu).
\]
At a minimizer this is nonnegative for every $\widetilde\nu$.
Taking $\widetilde\nu=\delta_q$ shows that
\[
 \mathscr V(q)\ge c,\qquad
 c=\int\mathscr V\dd\nu,
\]
for every $q\in[0,1]$. Since $\mathscr V-c\ge0$ everywhere and
\[
 \int(\mathscr V-c)\dd\nu=0,
\]
we have $\mathscr V=c$ $\nu$-almost everywhere. Continuity of
$\mathscr V$ then gives $\mathscr V=c$ on $\supp\nu$.

Since $Q'=S$ and $S$ is continuous,
\[
 \mathscr V'(q)=\frac L2\bigl(q-Q(Lq)\bigr),\qquad
 \mathscr V''(q)=\frac L2\bigl(1-LS(Lq)\bigr).
\]
Since $Q(L)=\E\tanh^2Y_L<1$, one has $\mathscr V'(1)>0$, so $1$
is not a global minimum. If $h>0$, strict convexity and evenness of
$U(0,\cdot)$ imply $B(0,h)>0$ and hence $\mathscr V'(0)<0$; this
also excludes zero. Every positive support point is an interior minimum,
which gives \eqref{eq:support-self-consistency}.
\end{proof}

\zcref{thm:derivative-moments} is a positive-field counterpart of the
zero-field inequality in \cite[Proposition~9.1]{Lopatto2026}.
Here the additional hypothesis $N(t)\le0$ provides the parameter bounds
needed for a noncentered diffusion law.

For a fixed $m_0\in[0,1]$, define
\[
 J(t;m_0)=\E\bigl[C_{xx}^2-12m_0CC_x^2+6m_0^2C^4\bigr],
\]
where the spatial derivatives are evaluated at $(t,Y_t)$. The theorem below
allows this value $m_0$ to differ from the value assigned to $m(t)$.
Either choice $m_0=m(t-)$ or $m_0=m(t+)$ satisfies the two
one-sided coefficient inequalities in the theorem.
The PDE solution and diffusion still correspond to the original coefficient $m$.

\begin{theorem}\label{thm:derivative-moments}
Let $m:[0,L]\to[0,1]$ be nondecreasing and measurable, let $0<t<L$
and $h>0$, and let $m_0\in[0,1]$ satisfy
\[
 m(s)\le m_0\quad\text{for a.e. }s\in[0,t),\qquad
 m(s)\ge m_0\quad\text{for a.e. }s\in(t,L].
\]
For the PDE solution and diffusion determined by $m$, define
\[
 N(t)=Q(t)-tS(t).
\]
The implication
\begin{equation}\label{eq:strict-derivative-moment-inequality}
 N(t)\le0,\qquad F(t)-2m_0G(t)=0
 \quad\Longrightarrow\quad J(t;m_0)>0
\end{equation}
holds.
\end{theorem}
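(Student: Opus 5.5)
The proof follows the outline of Steps~2--4 in the introduction, now with a general coefficient $m_0$ in place of the gap value. Fix $t$ and write $C=C(t,\cdot)$. First I will establish the profile estimates of \zcref{sec:profiles}. With
\[
 c_0=C(t,0),\qquad \lambda^2=-C_{xx}(t,0)/(2c_0),\qquad A(v)=C(t,v/\lambda)/c_0,\qquad u=-A'/(2A),
\]
I will show $\lambda>0$, $0<u'\le1$, $\tanh v\le u\le v$, $vu'\le u$ and $e^{-v^2}\le A\le\sech^2 v$. This is done by propagating these shape properties backward from the terminal profile $\log\cosh x$ through the Parisi PDE; they are preserved because $m\in[0,1]$ is nondecreasing. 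In the normalized variables I then rewrite the data of the theorem:
\begin{itemize}
 \item $C_x$ and $C_{xx}$ become $-2u$ and $4u^2-2u'$ (in units of $\lambda$, $\lambda^2$) times $C$;
 \item $F-2m_0G=0$ becomes $\E_\rho a=2\E_\rho u^2$, where $a=m_0C(t,v/\lambda)/\lambda^2$ and $\rho$ is the $C^2$-weighted law of $\lambda|Y_t|$;
 \item $J(t;m_0)>0$ becomes positivity of a dimensionless quadratic expression $O$ in moments of $a$, $w=u^2$ and $j=u'-a$ under $\rho$.
\end{itemize}
The case $m_0=0$ is trivial, since then $F(t)=0$ is impossible for $t>0$, so I may assume $m_0>0$.

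Second, I use the hypothesis $N(t)\le0$ to control the tilt. The drift of $|Y|$ points outward, and $x\mapsto B^2-tC^2$ is nondecreasing on $x\ge0$. Comparison with reflected Brownian motion therefore gives
\[
 N(t)\ge\E\bigl[B(t,h+\sqrt tZ)^2-tC(t,h+\sqrt tZ)^2\bigr].
\]
Integrating by parts in the Gaussian, the sign of this expectation is opposite to that of $\mathcal E_\alpha[A]$, with $\alpha=h/(\lambda t)$ and $d=1/(\lambda^2t)$. I then interpolate $A_\varepsilon=A^{1-\varepsilon}e^{-\varepsilon v^2}$ and show that zeros of $\varepsilon\mapsto\mathcal E_\alpha[A_\varepsilon]$ are crossed upward. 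This uses the preserved inequality $vu'\le u$: it implies that the $\varepsilon$-derivative kernel changes sign once, and a standard single-crossing argument follows. An explicit Gaussian computation gives $\mathcal E_{\alpha_*}[e^{-v^2}]<0$. Since the expectation is monotone in $\alpha$, I conclude $\theta=\alpha^2<\kappa$ and $d>\theta^2/\kappa$.

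Third, I obtain the diffusion density estimate. I will prove that
\[
 \rho\propto\omega(v)\,e^{-dv^2/2-\psi(v^2)}\cosh(\alpha v)
\]
with $\psi'\ge0$ and $\psi''\le0$. The method is to transform the forward equation by $e^{-mU}$, compare with the Neumann kernel $k_h$, and propagate the signs of the first two $r$-derivatives of $-\log(g/k_h)$ by parabolic comparison. For step functions $m$ this is done directly; the Brownian-bridge representation then passes the estimate to general nondecreasing $m$. The role of the conditions $m\le m_0$ before $t$ and $m\ge m_0$ after $t$ is to let the weight $\omega$ and the differential inequalities for $a$ and $j$ use $m_0$ consistently. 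The PDE profile at time $t$ depends only on $m$ after $t$, and the density only on $m$ before $t$, so replacing $m$ by $m_0$ on the relevant side only moves the needed inequalities in the favorable direction.

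Finally, integrating by parts against $\rho$, with the log-derivative bound $s=d+2\psi'\ge d$ and $s'\le0$, yields variance and covariance inequalities for $a$, $\mathcal T$ and $w$. I then split according to $\mu=a(0)\in(0,1]$:
\begin{itemize}
 \item for $\mu\le11/20$, two variance bounds already give $O>0$;
 \item for $\mu\ge29/40$, upper and lower bounds on the mean of $j$ are incompatible with the moment identity, so this case cannot occur;
 \item in the middle range, I add five nonnegative quantities with tuned coefficients to reach $O\ge\mathfrak a\theta^2+\mathfrak b\theta+\mathfrak c$ with $\mathfrak a>0$, and verify $4\mathfrak a\mathfrak c>\mathfrak b^2$ by exact arithmetic.
\end{itemize}
I expect this intermediate regime to be the main obstacle: finding multipliers for which the discriminant is negative uniformly in $\mu$, while the $\theta$-dependent terms from the tilt $\cosh(\alpha v)$ are absorbed using only $\theta<\kappa$ and $d>\theta^2/\kappa$. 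My first attempt would be to optimize the coefficients numerically on a grid in $\mu$, then certify them with rational bounds on subintervals.
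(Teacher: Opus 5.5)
Your proposal is correct in outline and follows the paper's proof essentially step for step. The shared steps are: excluding $m_0=0$ via $F(t)>0$; the normalized profile inequalities, which use $m\ge m_0$ after $t$; the reflected-Brownian comparison together with the interpolation $A^{1-\varepsilon}e^{-\varepsilon v^2}$, giving $\theta<\kappa$ and $d>\theta^2/\kappa$; the Neumann-kernel concavity of the weighted density, where $m\le m_0$ before $t$ enters as a nonnegative final jump; and the three-range analysis of $O$. Two ingredients your sketch leaves implicit, but which the paper's argument relies on, are the mean bound $y\ge a_0+\Sigma/2-\theta\xi/(d+4-\mu)$, used in all three ranges and in the middle range to eliminate $\Sigma$, and the covariance floor $\chi>(3+8\mu-8g)/50$, obtained from $V_a\mathcal H\ge 4R_a^2$ and $V_a\le 2g\chi$, which makes the leftover positive multiple of $\chi$ usable in the discriminant check.
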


Sections~\ref{sec:profiles}--\ref{sec:positivity} prove
\zcref{thm:derivative-moments}. The coefficient bound after $t$ gives the
normalized PDE inequalities in \zcref{sec:profiles}, while the bound
before $t$ enters the density comparison in \zcref{sec:diffusion-density}.
\zcref{sec:gaussian-comparison} uses $N(t)\le0$ to control the Gaussian
parameters. Sections~\ref{sec:weighted-moments}--\ref{sec:positivity}
combine these inputs to prove the strict moment inequality, whose proof
is completed in \zcref{sec:strict-inequality-completion}.
\zcref{sec:structure} then deduces \zcref{thm:main}.

\section{Estimates for the Parisi PDE}\label{sec:profiles}
The density comparison and the integrations by parts require spatial
derivative estimates that remain valid for arbitrary nondecreasing
coefficients in the original Parisi time coordinate, including coefficients
with jumps corresponding to atoms of the Parisi measure. We first prove
uniform regularity estimates and then establish the differential inequalities
preserved by the PDE after passing to reverse time.

We use reverse time throughout this section. Starting from the original
Parisi time coordinate in \eqref{eq:parisi-time}, set
\[
 \tau=L-t,\qquad
 U_{\mathrm{rev}}(\tau,x)=U(L-\tau,x),\qquad
 m_{\mathrm{rev}}(\tau)=m(L-\tau).
\]
The terminal-value Parisi PDE then becomes the initial-value equation
\begin{equation}\label{eq:reverse-profile-pde}
 \partial_\tau U_{\mathrm{rev}}
 =\frac12\bigl((U_{\mathrm{rev}})_{xx}
       +m_{\mathrm{rev}}(\tau)(U_{\mathrm{rev}})_x^2\bigr),
 \qquad
 U_{\mathrm{rev}}(0,x)=\log\cosh x.
\end{equation}
Since $m$ is nondecreasing in the original Parisi time coordinate,
$m_{\mathrm{rev}}$ is nonincreasing. In this section only, we suppress
the subscript ``$\mathrm{rev}$'' and write $U,m$ for these reverse-time
quantities, with $B=U_x$ and $C=U_{xx}$.

We collect standard regularity and approximation estimates for the Parisi
PDE; see \cite[Propositions~1--2]{AuffingerChen2015Properties} and
\cite[Theorem~4 and Lemmas~14, 16]{JagannathTobasco2016Dynamic}. The proof
below also records the weighted spatial-derivative bounds and the $L^1$
stability estimates for every fixed derivative order used later.

\begin{lemma}\label{lem:pde-regularity}
For every \(0<T<\infty\) and measurable \(m:[0,T]\to[0,1]\),
\eqref{eq:reverse-profile-pde} has a unique solution in the mild sense whose
derivative \(B\) is bounded. For every compact $K\subset\R$ and every integer
$j\ge0$, the maps
\[
 \tau\longmapsto U(\tau,\cdot)|_K,\qquad
 \tau\longmapsto \partial_x^jB(\tau,\cdot)|_K
\]
are absolutely continuous with values in $C(K)$. The solution is even in
\(x\), smooth in \(x\), and satisfies
\[
 B(\tau,-x)=-B(\tau,x),\qquad
 0<C(\tau,x)\le 1-B(\tau,x)^2,\qquad
 \lim_{x\to\pm\infty}B(\tau,x)=\pm1 .
\]
For every integer \(j\ge0\), the spatial derivatives
\(\partial_x^j B\) are bounded on \([0,T]\times\mathbb R\).  Their time
derivatives exist almost everywhere, are bounded, and satisfy the
differentiated equations.  There exist constants \(0<c_T<C_T<\infty\),
independent of $m$, such that
\begin{equation}\label{eq:profile-two-sided-tails}
 c_T\operatorname{sech}^2x
 \le C(\tau,x)\le C_T\operatorname{sech}^2x
 \qquad(0\le\tau\le T).
\end{equation}
For every integer $k\ge0$, there exists a constant $K_{T,k}$, independent
of $m$, such that
\begin{equation}\label{eq:profile-weighted-derivatives}
 |\partial_x^k C(\tau,x)|\le K_{T,k}\sech^2x,
 \qquad \left|\frac{\partial_x^kC(\tau,x)}{C(\tau,x)}\right|\le K_{T,k}.
\end{equation}
Moreover $U\ge0$ and $C_x\le0$ on the positive half-line.
For two coefficients $m_n,m$, let $U_n,U$ be the corresponding solutions
and put $B_n=(U_n)_x$, $B=U_x$. Then every integer $j\ge0$ satisfies
\begin{equation}\label{eq:profile-stability}
 \sup_{\tau\le T}\|\partial_x^j(B_n-B)(\tau)\|_\infty
 \le K_{T,j}\|m_n-m\|_{L^1[0,T]}.
\end{equation}
Moreover, $U_n\to U$ locally uniformly together with their spatial
derivatives whenever $m_n\to m$ in $L^1[0,T]$.
\end{lemma}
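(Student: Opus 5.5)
The plan is to prove everything first for step coefficients $m$, where the Hopf--Cole transform makes every quantity explicit, and then pass to general measurable $m$ by $L^1$ approximation using the stability estimate \eqref{eq:profile-stability}. The main obstacle is the two-sided tail bound \eqref{eq:profile-two-sided-tails}, and the weighted bounds \eqref{eq:profile-weighted-derivatives} that follow from it, with constants independent of $m$.

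\textbf{Existence, uniqueness and stability.} Differentiating \eqref{eq:reverse-profile-pde} in $x$ gives the viscous Burgers-type equation
\[
 B_\tau=\tfrac12B_{xx}+mBB_x,\qquad B(0,x)=\tanh x .
\]
I would solve it in mild (Duhamel) form, $B(\tau)=P_\tau\tanh+\int_0^\tau P_{\tau-s}\,\partial_x\bigl(\tfrac m2B^2\bigr)(s)\dd s$, where $P$ is the heat semigroup. The bound $\|\partial_xP_r\|_{\infty\to\infty}\lesssim r^{-1/2}$ makes this a contraction on $C([0,T];L^\infty)$ over short time windows. Iterating over windows gives a global solution, because the a priori bound $|B|\le1$ holds: for step $m$, $B$ is a tilted average of $\tanh$, as explained below.

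Higher derivatives $\partial_x^jB$ are bounded by induction on $j$, differentiating the Duhamel formula and using the same $r^{-1/2}$ gain each time. The map $\tau\mapsto \partial_x^jB|_K$ is then absolutely continuous, and the differentiated equations hold almost everywhere. For stability, I subtract the mild equations for $m_n$ and $m$. The difference $B_n-B$ is then driven by a term bounded by $|m_n-m|$ plus terms linear in $\partial_x(B_n-B)$. A singular Gronwall inequality gives \eqref{eq:profile-stability}, first for $j=0$ and then inductively for each $j$. $U$ is recovered from $B$ by integrating \eqref{eq:reverse-profile-pde}. Evenness and the oddness of $B$ follow from uniqueness.

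\textbf{Structure for step coefficients.} On an interval where $m\equiv c>0$, the function $e^{cU}$ solves the heat equation. Hence $B(\tau,x)$ is the mean of $B(\tau_0,\cdot)$ under the tilted Gaussian measure
\[
 \mu_x(\dd y)\propto e^{cU(\tau_0,y)}\,K_{\tau-\tau_0}(x-y)\dd y,
\]
where $K$ is the heat kernel as in the introduction. Differentiating in $x$, $C$ is an averaged $C(\tau_0,\cdot)$ plus $c\Var_{\mu_x}B(\tau_0,\cdot)$. Induction over the steps then gives $|B|<1$ and $C>0$.

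For $C\le1-B^2$ I would use the quantity $R=1-B^2-C$, which vanishes at $\tau=0$ since $\tanh$ is the initial datum. From the differentiated equations,
\[
 R_\tau-\tfrac12R_{xx}-mBR_x=(1-m)\,C^2\ \ge0 ,
\]
so the maximum principle gives $R\ge0$. The same argument applied to the equation for $C_x$ on $x\ge0$, with the boundary value $C_x(\tau,0)=0$ coming from evenness, gives $C_x\le0$ there. Since $\log\cosh\ge0$ and $U_\tau\ge\frac12U_{xx}$, comparison gives $U\ge0$. The limits $B\to\pm1$ at $\pm\infty$ follow from the tail bound below.

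\textbf{The tail bound (main obstacle).} \eqref{eq:profile-two-sided-tails} must hold uniformly in $m$. For the upper bound, I would show that $1-B^2\le C_T'\sech^2x$. For $x\ge0$, $1-B$ solves a linear parabolic equation with bounded drift $mB\le1$, starting from $1-\tanh x\le2e^{-2x}$. Comparison with the supersolution $e^{-2x+c\tau}$, for $c$ large, gives $1-B\le C_T'e^{-2x}$, and $C\le1-B^2$ finishes the upper bound.

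The lower bound $C\ge c_T\sech^2x$ is what I expect to be hardest. I would try comparison for $\log C$: the function $\phi=\log C$ satisfies
\[
 \phi_\tau=\tfrac12\phi_{xx}+\tfrac12\phi_x^2+mB\phi_x+mC .
\]
Against the subsolution $-2|x|-c\tau-\text{const}$, the drift $mB\phi_x$ has the favorable sign far out, since $B\approx\operatorname{sign}x$. Near the origin, $C$ is bounded below directly from the tilted-Gaussian representation. Once the lower bound is in hand, the weighted bounds \eqref{eq:profile-weighted-derivatives} follow by the same supersolution comparison applied to the linear equations for $\partial_x^kC$, whose coefficients are bounded, giving $|\partial_x^kC|\lesssim\sech^2x$; dividing by $C\ge c_T\sech^2x$ gives the ratio bound.

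\textbf{Passage to general $m$.} All the constants above are uniform over step coefficients with values in $[0,1]$. For general measurable $m$, \eqref{eq:profile-stability} together with local uniform convergence lets every pointwise inequality pass to the limit. Strict positivity $C>0$ survives because it follows from the uniform lower bound in \eqref{eq:profile-two-sided-tails}.
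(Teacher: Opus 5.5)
\textbf{Overall.} Most of your plan is a workable alternative to the paper's route. The paper works with smooth rather than step coefficients and uses the maximum principle throughout instead of Hopf--Cole. Your steps for $C>0$ and $|B|\le1$ via the tilted-Gaussian formula, for $R=1-B^2-C\ge0$ and $C_x\le0$ by the maximum principle, and for the upper tail via the barrier $2e^{c\tau-2x}$ for $1-B$ on $x\ge0$ combined with $C\le(1-B)(1+B)\le2(1-B)$ are all fine.

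\textbf{The gap.} The gap is the uniform lower bound $C\ge c_T\sech^2x$, which you identify as the main obstacle but do not close. The function $-2|x|$ has a concave corner at $x=0$, so $-2|x|-c\tau-K$ is not a subsolution across the origin. Your comparison for $\log C$ therefore only works on $x\ge0$, and it needs a boundary bound $C(\tau,0)\ge c_T'$ that is uniform in $\tau\le T$ and in $m$. You propose to get this ``directly from the tilted-Gaussian representation.'' That formula, however, holds only on a single interval where $m$ is constant, and it bounds $C(\tau,\cdot)$ below by an average of $C(\tau_0,\cdot)$ at the start of that interval. For a step coefficient with many short steps you must chain it, and each link loses a factor. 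Unless you prove a one-step estimate such as $\E_{\mu_x}[\sech^2]\ge e^{-K(\tau-\tau_0)}\sech^2x$ uniformly in $x$ (a barrier argument in disguise), the constant degenerates as the number of steps grows. Your passage to measurable $m$ relies precisely on uniformity over step coefficients. So the lower half of \eqref{eq:profile-two-sided-tails} is not established, and neither are the strict positivity of $C$ in the limit and the ratio bounds in \eqref{eq:profile-weighted-derivatives}, which both depend on it. A side remark: the drift is not favorable there. With $\Phi_x=-2$ it contributes $-2mB\le0$; what makes $-2x-c\tau-K$ a subsolution is that $\tfrac12\Phi_x^2=2$ compensates it, because $mB\le1$.

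\textbf{How to close it.} The paper avoids the corner entirely. The equation $C_\tau=\tfrac12C_{xx}+mBC_x+(mC)C$ is linear, with drift $mB$ and potential $mC$ both bounded by $1$. Since $\phi=\sech^2x$ satisfies $\phi'/\phi=-2\tanh x$ and $\phi''/(2\phi)=2-3\phi$, the functions $e^{\pm5\tau}\sech^2x$ are super- and subsolutions on the whole line. Comparison then gives $e^{-5\tau}\sech^2x\le C\le e^{5\tau}\sech^2x$ in one step, with no patch at the origin. In your $\log C$ language, this amounts to replacing $-2|x|$ by the smooth $-2\log\cosh x$.

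You can also close the gap with facts you already have. $C(\tau,\cdot)$ is even and nonincreasing on $[0,\infty)$, and $\int_\R C\,\dd x=2$. Your upper tail bound gives $\int_{|x|>R_T}C\,\dd x\le1$ for some $R_T$ depending only on $T$. Hence $C(\tau,0)\ge1/(2R_T)$. Then $\kappa e^{-2x}$ with $\kappa=\min\{1,1/(2R_T)\}$ is a subsolution on $x\ge0$ with admissible initial and boundary data.

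\textbf{A smaller caution on stability.} In the Duhamel formula, keep the source $(m_n-m)BB_x$ undifferentiated, and move $\partial_x$ onto the heat kernel only for the terms linear in $B_n-B$. Otherwise you get $\int_0^\tau(\tau-s)^{-1/2}|m_n-m|(s)\,\dd s$, which is only $O(\|m_n-m\|_{L^1}^{1/2})$, not the linear bound in \eqref{eq:profile-stability}.
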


\begin{proof}
The bound $C+B^2\le1$ gives global existence. We then use the weight
$\sech^2x$ to bound $C$, its spatial derivatives, and their ratios to $C$.
Let $\mathsf P_t$ be the heat semigroup. The Burgers equation
\begin{equation}\label{eq:profile-burgers}
 B_\tau=\frac12B_{xx}+mBB_x,\qquad B(0,x)=\tanh x,
\end{equation}
has the mild form
\[
 B(\tau)=\mathsf P_\tau\tanh+
 \frac12\int_0^\tau m(s)\partial_x\mathsf P_{\tau-s}(B(s)^2)\dd s.
\]
The estimate $\|\partial_x\mathsf P_t f\|_\infty
\le\sqrt{2/(\pi t)}\|f\|_\infty$ makes the integral map a contraction
on a bounded ball in $C([0,T_0];\mathrm{BUC}(\R))$ for small $T_0$. Using the
nondivergence form and differentiating the heat semigroup gives the same
construction in $C_b^1$, since the singular kernel $t^{-1/2}$ is integrable.
Work first with smooth coefficients $m$. The maximum principle gives
$|B|\le1$, and differentiation yields
\begin{align}
 C_\tau&=\frac12C_{xx}+mBC_x+mC^2,\label{eq:curvature-pde}\\
 (C+B^2)_\tau
 &=\frac12(C+B^2)_{xx}+mB(C+B^2)_x+(m-1)C^2.
 \label{eq:curvature-plus-square}
\end{align}
The first equation, regarded locally as linear with potential $mC$,
preserves $C>0$; the second gives $C+B^2\le1$. These bounds continue
the local solution to every finite time. Reflection gives parity.
All comparisons below may first be made on bounded spatial intervals:
add $\varepsilon e^{a\tau}(1+x^2)$ to a supersolution, choose $a$ larger
than the bounds on the drift and reaction coefficients, and then let the
interval grow and $\varepsilon$ decrease to zero. This gives comparison
for bounded solutions on the line and on the half-line with the stated
boundary values.

Put $\phi(x)=\sech^2x$. Since
$\phi'/\phi=-2\tanh x$ and $\phi''/(2\phi)=2-3\phi$,
multiples of $e^{A\tau}\phi$ are comparison barriers for
$\partial_\tau-(1/2)\partial_{xx}-b\partial_x-a$ whenever $a,b$
are bounded. More precisely, a solution with initial data bounded by
$K\phi$ and forcing bounded by $K\phi$ has absolute value bounded by
$K_T\phi$; choose $A>2+2\|b\|_\infty+\|a\|_\infty$ and increase
the prefactor. Applied to the equation for $C$, the same comparison gives
\[
 e^{-5\tau}\phi(x)\le C(\tau,x)\le e^{5\tau}\phi(x).
\]
For ${C_k}=\partial_x^kC$, $k\ge1$, differentiation gives
\[
 ({C_k})_\tau=\frac12({C_k})_{xx}+mB({C_k})_x+m(k+2)C{C_k}+\mathscr R_k,
\]
where
\[
 \mathscr R_k=
 m\sum_{l=2}^k\binom kl C_{l-1}C_{k-l+1}
 +m\sum_{l=1}^{k-1}\binom kl C_lC_{k-l}.
\]
Only lower derivatives occur in $\mathscr R_k$. The initial derivatives
are bounded by constants times $\phi$, and $\phi^2\le\phi$.
Induction with the same barriers gives
$|{C_k}|\le K_{T,k}\phi$, and the lower bound for $C$ gives
\eqref{eq:profile-weighted-derivatives}. This also bounds all spatial derivatives
of $B$; their equations bound the time derivatives.
In particular $B_\tau=C_x/2+mBC=O_T(\phi)$, so integration in time
gives $B(\tau,x)-\tanh x=O_T(\phi(x))$ and the limits at infinity.
For $k=1$ the source $\mathscr R_1$ vanishes. Its nonpositive initial
data on $x\ge0$ and zero boundary value give $C_x\le0$.
Also $U_\tau=(C+mB^2)/2\ge0$, hence $U\ge\log\cosh x\ge0$.

For stability put $w=B_n-B$. Subtraction gives
\[
 w_\tau=\frac12w_{xx}+m_nB_nw_x+m_nCw+(m_n-m)BC,\qquad w(0)=0.
\]
Since $0\le m_nC\le1$ and $|BC|\le1$, comparison gives
$\sup_{\tau\le T}\|w(\tau)\|_\infty\le e^T\|m_n-m\|_{L^1}$.
After $j$ spatial differentiations the drift is $m_nB_n$, the
zero-order coefficient is $m_n(jC_n+C)$, and the remaining terms
involve lower derivatives of $w$ or
$(m_n-m)\partial_x^j(BC)$. Induction proves \eqref{eq:profile-stability}.
Finally
\[
 U(\tau,0)=\frac12\int_0^\tau C(s,0)\dd s,\qquad
 U(\tau,x)=U(\tau,0)+\int_0^xB(\tau,y)\dd y
\]
give convergence of $U$. For measurable $m$, choose smooth
$m_n\to m$ in $L^1[0,T]$. Applying \eqref{eq:profile-stability} pairwise gives
\[
 \sup_{\tau\le T}\|\partial_x^j(B_n-B_k)(\tau)\|_\infty
 \le K_{T,j}\|m_n-m_k\|_{L^1[0,T]}.
\]
Thus every spatial derivative is Cauchy in $C([0,T];C_b(\R))$; successively
passing to the limit identifies these limits with the spatial derivatives of
$B$. The differentiated right-hand sides converge in $L^1([0,T];C_b(\R))$
and are uniformly bounded, so their time-integral forms pass to the limit,
giving the asserted time regularity and differentiated equations for
measurable $m$.
\end{proof}

We next compare the spatial derivative of $-C_x/(2C)$ with $m_0C$,
where $m_0$ is a lower bound for the coefficient in reverse time.
The inequalities below control their nonnegative difference and show
that its ratio to $C$ is nondecreasing. These comparisons will give
the normalized inequalities used in \zcref{sec:weighted-moments}.
They are the inequalities of
\cite[Propositions~3.6--3.7]{Lopatto2026} in the original spatial
coordinate; we give a comparison proof that also includes $m_0=0$.

\begin{lemma}\label{lem:differential-inequalities}
Suppose $m$ is nonincreasing in the reverse time coordinate. At a fixed
time $\tau$, choose $m_0\in[0,1]$ no larger than $m$ almost everywhere
on $[0,\tau]$, and put
\[
 \zeta=-\frac{C_x}{2C},\qquad \mathsf f=\zeta_x-m_0C.
\]
Then, for $x\ge0$,
\begin{equation}\label{eq:differential-inequalities}
 \zeta\ge0,\qquad \mathsf f\ge0,\qquad
 -2\zeta\mathsf f\le\mathsf f_x\le\zeta\mathsf f,
 \qquad \left(\frac{\mathsf f}{C}\right)_x\ge0.
\end{equation}
Returning to the original time coordinate $t=L-\tau$, the hypothesis becomes
\[
 m(s)\ge m_0\qquad\text{for almost every }s\in[t,L].
\]
In particular, at a support endpoint one may take $m_0$ to be the value
approached from within the support interval.
\end{lemma}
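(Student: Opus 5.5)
The plan is to work directly with the reverse-time equations from \zcref{lem:pde-regularity}, first for coefficients $m$ that are smooth, or piecewise constant, and nonincreasing, and then to pass to general measurable $m$ by the stability estimate \eqref{eq:profile-stability}. All the claimed inequalities are pointwise in $x$ at a fixed $\tau$, and the spatial derivatives converge locally uniformly, so they survive the limit.

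The first inequality, $\zeta\ge0$, is immediate from $C>0$ and $C_x\le0$ on the half-line in \zcref{lem:pde-regularity}. Next I would rewrite the remaining claims as two monotonicity statements. A direct computation using $C_x=-2C\zeta$ gives
\[
 \Bigl(\frac{\mathsf f}{C}\Bigr)_x=\frac{\mathsf f_x+2\zeta\mathsf f}{C},\qquad
 (\mathsf f\,C^{1/2})_x=C^{1/2}(\mathsf f_x-\zeta\mathsf f).
\]
So the lower bound $\mathsf f_x\ge-2\zeta\mathsf f$ is the same statement as $(\mathsf f/C)_x\ge0$, and the upper bound $\mathsf f_x\le\zeta\mathsf f$ is the statement that $\mathsf f C^{1/2}$ is nonincreasing. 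The task is therefore to propagate three signs: $\mathsf f\ge0$, $g_1:=\mathsf f_x+2\zeta\mathsf f\ge0$, and $g_2:=\zeta\mathsf f-\mathsf f_x\ge0$ on $x\ge0$.

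At $\tau=0$ one has $C=\sech^2x$, $\zeta=\tanh x$ and $\mathsf f=(1-m_0)\sech^2x\ge0$. Hence $g_1\equiv0$ and $g_2=3(1-m_0)\tanh x\,\sech^2x\ge0$, so all three signs hold initially, with $g_1$ exactly critical.

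To propagate them I would derive the evolution equations. Taking logarithms in \eqref{eq:curvature-pde} and differentiating gives
\[
 \zeta_\tau=\tfrac12\zeta_{xx}+(mB-2\zeta)\zeta_x+2mC\zeta .
\]
Differentiating once more and subtracting $m_0$ times \eqref{eq:curvature-pde} yields a linear equation of the form
\[
 \mathsf f_\tau=\tfrac12\mathsf f_{xx}+b\,\mathsf f_x+c\,\mathsf f+(m-m_0)\,R ,
\]
with bounded drift $b$ and bounded zero-order coefficient $c$. I expect the source $R$ to be a nonnegative combination such as $C\zeta_x+\dots$ or $2C\zeta^2$, and this is exactly where the hypothesis $m\ge m_0$ on $[0,\tau]$ enters. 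Since $\mathsf f$ is even, $\mathsf f_x(\tau,0)=0$, and $\mathsf f\to0$ at infinity with $\sech^2$ weight by \eqref{eq:profile-weighted-derivatives}. The barrier comparison from the proof of \zcref{lem:pde-regularity} then gives $\mathsf f\ge0$.

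For $g_1$ and $g_2$, the cleanest route is to treat $\varphi=\log(\mathsf f/C)$ and $\log(\mathsf f C^{1/2})$. These are well defined once $\mathsf f>0$; I would obtain strictness by perturbing $m_0$ downward slightly and letting it return at the end. I would then write the parabolic equations for their $x$-derivatives. Each such equation should have the form
\[
 \text{(linear operator)}\,[\,\cdot\,]=(\text{nonnegative terms}),
\]
and the terms on the right should involve $m-m_0\ge0$, $\zeta\ge0$, $C>0$, or products of the quantities already known to be nonnegative, namely $g_1$, $g_2$ and $\mathsf f$. Oddness in $x$ gives the boundary value $0$ at $x=0$, and the weighted bounds control the behaviour at infinity, so the same half-line comparison applies. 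Where a source term involves the other quantity, I would run the argument for $g_1$ and $g_2$ jointly as a cooperative system, using the first time either of them touches zero.

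The main obstacle will be the upper bound $g_2\ge0$, that is, $(\mathsf fC^{1/2})_x\le0$. The equation for $g_1$ should close easily, because $g_1$ starts at zero and its source is driven by $(m-m_0)$ and by $\zeta_x\ge m_0C$. By contrast, the source in the equation for $g_2$ contains terms like $-\zeta_x\mathsf f$ and $-\zeta^2\mathsf f$ whose sign is not obvious. To handle this I would first try to absorb those terms using $g_1\ge0$ together with the structural identity $\zeta_x=\mathsf f+m_0C$. If that fails, the fallback is to borrow the argument of \cite[Propositions~3.6--3.7]{Lopatto2026}, translated to this spatial coordinate. The final sentence of the lemma, translating the hypothesis back to original time, is just the substitution $t=L-\tau$. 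At a support endpoint, the one-sided limit of $m$ from inside the support is a valid lower bound for $m$ on $[t,L]$, because $m$ is nondecreasing in original time.
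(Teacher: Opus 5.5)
There is a genuine gap in propagating $\mathsf f\ge0$, and it comes from freezing $m_0$ inside $\mathsf f$ during the evolution. Put $\mathscr L_{\mathrm{rev}}=\partial_\tau-\tfrac12\partial_{xx}-(mB-2\zeta)\partial_x$. Then $\mathscr L_{\mathrm{rev}}\zeta_x=3mC\zeta_x-2\zeta_x^2-4mC\zeta^2$ and $\mathscr L_{\mathrm{rev}}C=mC^2-4\zeta^2C$. Hence, for $\mathsf f=\zeta_x-m_0C$,
\[
 \mathscr L_{\mathrm{rev}}\mathsf f=-\bigl(2\mathsf f-(3m-4m_0)C\bigr)\mathsf f+2(m-m_0)C\bigl(m_0C-2\zeta^2\bigr).
\]
The inhomogeneous term has the wrong sign wherever $2\zeta^2>m_0C$, for example at large $x$, where $\zeta\to1$ and $C\to0$. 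So the nonnegative $R$ you expected (``$C\zeta_x$ or $2C\zeta^2$'') is not what appears. Moving terms between drift and reaction cannot repair this unless you already know $\mathsf f\ge(m-m_0)C$.

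The same defect recurs for $g_1$ and $g_2$. Written in terms of your frozen $\mathsf f$, their natural sources involve $\mathsf f-(m-m_0)C$, not $\mathsf f$. A telltale sign is that your argument never uses the hypothesis that $m$ is nonincreasing in reverse time; it only uses $m\ge m_0$.

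The missing idea is to use the running value $m(\tau)$ in $\mathsf f$ while propagating, and to pass to $m_0$ only at the final time.

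With the running value, $\mathscr L_{\mathrm{rev}}\mathsf f=-(2\mathsf f+mC)\mathsf f-\dot mC$. Its source is nonnegative precisely because $\dot m\le0$.

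The quantity $\mathsf g=\mathsf f_x+2\zeta\mathsf f=C(\zeta_x/C)_x$ does not depend on the subtracted constant. Together with $\mathsf p=\zeta\mathsf f-\mathsf f_x$, it gives a triangular system:
\[
 \mathscr L_{\mathrm{rev}}\mathsf g=-(8\mathsf f+4mC)\mathsf g+12\zeta\mathsf f(\mathsf f+mC),\qquad
 \mathscr L_{\mathrm{rev}}\mathsf p=-(5\mathsf f+mC)\mathsf p+3\zeta\mathsf f^2-3\dot m\zeta C.
\]
So the signs propagate in the order $\mathsf f$, $\mathsf g$, $\mathsf p$. No cooperative coupling, logarithmic transform, or strictness perturbation is needed, since the $\mathsf p$ source needs only $\mathsf f\ge0$ and $\dot m\le0$.

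At the fixed time, replace $m(\tau)$ by $m_0=m(\tau)-\delta$ with $\delta\ge0$. This sends $\mathsf f\mapsto\mathsf f+\delta C$, leaves $\mathsf g$ unchanged, and sends $\mathsf p\mapsto\mathsf p+3\delta\zeta C$, so all signs persist. Smooth monotone approximation then gives the general case.

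The rest of your proposal is correct: the reformulation of the two derivative bounds via $(\mathsf f/C)_x$ and $(\mathsf fC^{1/2})_x$, the initial data, and the translation back to original time.
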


\begin{proof}
By \zcref{lem:pde-regularity}, $C_x\le0$ on $x\ge0$, so
$\zeta\ge0$ there. First take a smooth nonincreasing coefficient $m$
in reverse time and use its value $m=m(\tau)$. Define
\[
 \mathsf f=\zeta_x-mC,\qquad
 \mathsf g=\mathsf f_x+2\zeta\mathsf f,\qquad
 \mathsf p=\zeta\mathsf f-\mathsf f_x,
\]
and write
$\mathscr L_{\mathrm{rev}}=\partial_\tau-(1/2)\partial_{xx}-(mB-2\zeta)\partial_x$.
Differentiation of \eqref{eq:curvature-pde} first gives
\begin{equation}\label{eq:log-derivative-evolution}
 \zeta_\tau=\frac12\zeta_{xx}+(mB-2\zeta)\zeta_x+2mC\zeta.
\end{equation}
Using $C_x=-2\zeta C$ and $\zeta_x=\mathsf f+mC$, further
differentiation gives the triangular system
\begin{align}
 \mathscr L_{\mathrm{rev}}\mathsf f
   &=-(2\mathsf f+mC)\mathsf f-\dot mC,
       \label{eq:physical-f-evolution}\\
 \mathscr L_{\mathrm{rev}}\mathsf g
   &=-(8\mathsf f+4mC)\mathsf g
       +12\zeta\mathsf f(\mathsf f+mC),
       \label{eq:physical-g-evolution}\\
 \mathscr L_{\mathrm{rev}}\mathsf p
   &=-(5\mathsf f+mC)\mathsf p
       +3\zeta\mathsf f^2-3\dot m\zeta C.
       \label{eq:physical-p-evolution}
\end{align}
Each fixed spatial derivative of $\zeta=-C_x/(2C)$ is a polynomial in
ratios of derivatives of $C$ to $C$. Thus
\eqref{eq:profile-weighted-derivatives} bounds all coefficients and unknowns
in this system on a finite time interval. For a smooth approximation,
$\dot m$ is bounded as well. Initially $C=\sech^2x$, $\zeta=\tanh x$, and
\[
 \mathsf f=(1-m(0))C\ge0,\qquad \mathsf g=0,\qquad
 \mathsf p=3(1-m(0))\tanh x\sech^2x\ge0\quad(x\ge0).
\]
Since $-\dot m\ge0$, the first equation has nonnegative source
$-\dot m C$ and a bounded reaction coefficient $-(2\mathsf f+mC)$.
Comparison gives $\mathsf f\ge0$ on $\R$. Its sign makes the source
in the second equation nonnegative on $x\ge0$, so $\mathsf g\ge0$.
The source in the third is then nonnegative as well, giving
$\mathsf p\ge0$. Both odd quantities vanish at zero.
Each equation has diffusion coefficient $1/2$; comparison on the
unbounded domain follows by a quadratic spatial barrier and an
exponential time weight.
The three signs give the first three inequalities in
\eqref{eq:differential-inequalities}, and
$(\mathsf f/C)_x=\mathsf g/C\ge0$ gives the last.

At the fixed time $\tau$, keep $B$ and $C$ unchanged and replace $m(\tau)$
by $m_0\le m(\tau)$ in the definitions of these quantities. With
$\delta=m(\tau)-m_0\ge0$, this changes them by
\[
 \mathsf f\mapsto\mathsf f+\delta C,\qquad
 \mathsf g\mapsto\mathsf g,\qquad
 \mathsf p\mapsto\mathsf p+3\delta\zeta C.
\]
Thus their signs persist. Approximate an arbitrary nonincreasing $m$
by smooth nonincreasing functions preserving the lower bound $m_0$.
The spatial-derivative convergence in \zcref{lem:pde-regularity} and
positivity of $C$ on compact intervals pass the inequalities to the
limit, including when $m_0=0$.
\end{proof}

For the Gaussian comparison we need $\zeta(x)/x$ to be nonincreasing.
The next lemma also proves positivity of the normalization parameter
$\lambda$.

\begin{lemma}\label{lem:log-derivative}
Suppose $m$ is nonincreasing in reverse time. At a fixed reverse time
$\tau$, write $C(x)=C(\tau,x)$ and put $\zeta=-C_x/(2C)$. Then
\begin{equation}\label{eq:log-derivative-radial}
 x\zeta_x(x)-\zeta(x)\le0\qquad(x>0).
\end{equation}
Moreover, with $c_0=C(0)$ and
$\lambda^2=-C_{xx}(0)/(2c_0)$, one has $\lambda>0$ and
\begin{equation}\label{eq:log-derivative-bound}
 0<\zeta_x(x)\le\lambda^2\qquad(x\in\R).
\end{equation}
\end{lemma}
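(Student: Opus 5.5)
The plan is to derive both assertions from one sign statement: concavity of $\zeta$ on the half-line.

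\textbf{Reduction.} Since $C$ is even, $\zeta=-C_x/(2C)$ is odd with $\zeta(0)=0$. Differentiating $C_x=-2\zeta C$ and evaluating at $0$, where $\zeta$ and $C_x$ vanish, gives $C_{xx}(0)=-2\zeta_x(0)c_0$, so
\[
 \zeta_x(0)=-\frac{C_{xx}(0)}{2c_0}=\lambda^2 .
\]
Set $R(x)=x\zeta_x(x)-\zeta(x)$. Then $R(0)=0$ and $R_x=x\zeta_{xx}$. So \eqref{eq:log-derivative-radial} and the upper bound $\zeta_x\le\zeta_x(0)=\lambda^2$ on $x\ge0$ both follow from
\[
 \zeta_{xx}\le0\qquad(x\ge0).
\]
Evenness of $\zeta_x$ then extends the upper bound to all of $\R$. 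It therefore remains to prove this concavity and the strict lower bound $\zeta_x>0$; positivity of $\lambda$ follows from the latter at $x=0$.

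\textbf{Positivity of $\zeta_x$.} I would use \zcref{lem:differential-inequalities} with $m_0=m(\tau)$, or rather the pre-replacement quantity $\mathsf f=\zeta_x-m(\tau)C$ for smooth nonincreasing $m$. This satisfies \eqref{eq:physical-f-evolution}, which is linear in $\mathsf f$ with bounded coefficients, nonnegative source $-\dot mC$, and initial value $(1-m(0))\sech^2x$. There are two cases. If $m(\tau)>0$, then $\zeta_x=\mathsf f+m(\tau)C>0$ because $C>0$. If $m(\tau)=0$, then $m\equiv0$ on $[0,\tau]$ since $m$ is nonincreasing in reverse time; hence $m(0)=0$, the initial datum $\sech^2x$ is strictly positive, and the strong maximum principle gives $\mathsf f=\zeta_x>0$. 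To pass to general $m$, I would avoid a naive limit, which only yields $\ge0$. Instead I would argue directly at the fixed time. Write the $\zeta_x$-equation, obtained by differentiating \eqref{eq:log-derivative-evolution}, as a linear equation for $\eta=\zeta_x$ with bounded potential. Its only inhomogeneous term is $-4mC\zeta^2$, which is controlled by comparison with a positive barrier built from $\mathsf f+m_0C$.

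\textbf{Concavity (the main obstacle).} Put $k=\zeta_{xx}$, which is odd and vanishes at $0$. Initially $\zeta=\tanh x$, so $k=-2\tanh x\sech^2x\le0$ on $x\ge0$. Differentiating \eqref{eq:log-derivative-evolution} twice gives a linear equation
\[
 \mathscr L_{\mathrm{rev}}k=(\text{bounded})\,k+\Sigma .
\]
The source $\Sigma$ collects terms in $m\,C$, $\zeta$, $\zeta_x$, $C_x=-2\zeta C$, together with $\dot m$ terms. The key step is to show $\Sigma\le0$ on $x\ge0$. I would first rewrite $\Sigma$ through $\mathsf f$, $\mathsf g=\mathsf f_x+2\zeta\mathsf f$ and $\mathsf p=\zeta\mathsf f-\mathsf f_x$, exactly as in the triangular system of \zcref{lem:differential-inequalities}. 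Each such piece becomes a product of the known nonnegative quantities $\zeta,\mathsf f,\mathsf g,\mathsf p,C$ with a definite sign, plus $k$-multiples absorbed into the potential. The $\dot m$ contributions should enter with the favourable sign $-\dot m\ge0$ multiplied by $-\zeta C\le0$.

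If a direct sign check fails, the fallback is to run the comparison on $R$ itself, or on $\zeta/x$. This quantity is controlled by the $-(mB-2\zeta)\partial_x$ drift, and its equation may have a cleaner source thanks to $\mathsf p\ge0$, that is, $\zeta_{xx}\le\zeta\mathsf f+m C_x$.

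Finally, comparison on the half-line gives $k\le0$. This uses the boundary value $k(\tau,0)=0$ and the weighted bounds \eqref{eq:profile-weighted-derivatives}, together with the quadratic barrier and exponential time weight used in the previous lemma. The smooth approximation of $m$ then passes the non-strict inequalities to general nonincreasing $m$.
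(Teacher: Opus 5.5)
There is a genuine gap. The statement you reduce everything to, concavity $\zeta_{xx}\le0$ on $x\ge0$, is false in general, so the sign check you identify as the main obstacle cannot succeed. Differentiating \eqref{eq:log-derivative-evolution} twice gives, for $k=\zeta_{xx}$,
\[
 k_\tau=\tfrac12k_{xx}+(mB-2\zeta)k_x+(4mC-6\zeta_x)k+2mC\zeta\,(4\zeta^2-7\zeta_x).
\]
There are no $\dot m$ terms: the $\zeta$-equation contains none, and they appear only if you split $\zeta_x=\mathsf f+m(\tau)C$, after which they cancel. Since $C\asymp\sech^2x$ forces $\zeta\to1$ while $\zeta_x\to0$, this source is strictly positive for large $x$ whenever $m>0$.

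Rewriting it through $\mathsf f,\mathsf g,\mathsf p$ cannot help. For instance, $\mathsf p\ge0$ only gives $\zeta_{xx}\le\zeta(\zeta_x-3mC)$, which allows positive values.

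Concavity really does fail. Take $m\equiv c\in(0,1)$ and $\tau$ large. Cole--Hopf gives $e^{cU}=\E\cosh^c(x+\sqrt\tau Z)$. For $1/c\ll x$ up to about $c\tau$ one finds $C\approx c\sech^2(cx)$, so $\zeta\approx c\tanh(cx)$: a plateau near height $c$ with exponentially small slope. Beyond $x\approx c\tau$, $C$ behaves like $e^{-(x+c\tau)^2/(2\tau)}$, and $\zeta$ ramps up roughly like $(x+c\tau)/(2\tau)$ toward $1$. A concave $\zeta$ would lie below its nearly flat tangent line at the plateau. That contradicts $\int_0^X\zeta\dd x=\tfrac12\log(c_0/C(X))=X+O(\tau)$ when $X$ is a large multiple of $\tau$, which follows from \eqref{eq:profile-two-sided-tails}.

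Your fallback is the right route, and it is the one the paper takes: run the comparison on $R=x\zeta_x-\zeta$ itself. Its equation is
\[
 R_\tau=\tfrac12R_{xx}+(mB-2\zeta-x^{-1})R_x+2(mC-\zeta_x)R+m(xC-B)\zeta_x-4mxC\zeta^2 .
\]
Both sources are nonpositive on $x>0$. This uses $\zeta_x\ge0$ (from \zcref{lem:differential-inequalities} with $m_0=0$) and $B(x)=\int_0^xC\ge xC(x)$ (from $C_x\le0$). The singular drift $-x^{-1}$ is handled by $R=O(x^3)$ at the origin together with a barrier $\varepsilon e^{As}(1+x^2)$ on $[\delta,R_0]$. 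The upper bound then comes from $\zeta_x\le\zeta/x\le\lim_{x\downarrow0}\zeta/x=\lambda^2$, not from monotonicity of $\zeta_x$.

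Your positivity step is also left unfinished when the essential infimum of $m$ on $[0,\tau]$ is zero. The barrier you propose is not specified, and the source $-4mC\zeta^2$ in the $\zeta_x$-equation has the wrong sign for a lower bound. The paper reverses the order and uses only non-strict inequalities, which survive the smooth approximation:
\begin{enumerate}
\item If $\lambda=0$, then $0\le\zeta_x\le\zeta/x\le0$, so $\zeta\equiv0$ and $C$ is constant. This contradicts $C>0$ and $C\to0$, so $\lambda>0$.
\item The last inequality of \eqref{eq:differential-inequalities} with $m_0=0$ says $\zeta_x/C$ is nondecreasing on $x\ge0$. Hence $\zeta_x\ge(\lambda^2/c_0)C>0$.
\end{enumerate}
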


\begin{proof}
\zcref{lem:differential-inequalities} with $m_0=0$ gives
$\zeta_x\ge0$. Since $C_x\le0$, one has
$B(x)=\int_0^xC(y)\dd y\ge xC(x)$ for $x\ge0$.
Set $H=x\zeta_x-\zeta$. Its sign determines the monotonicity of
$\zeta(x)/x$. The weighted derivative bounds allow $H$ to grow with $x$,
so we use a quadratic barrier. For smooth nonincreasing $m$, differentiating 
\eqref{eq:log-derivative-evolution} gives the equation for $H$:
\begin{align}\label{eq:radial-quantity-evolution}
 H_\tau={}&\frac12H_{xx}
 +(mB-2\zeta-x^{-1})H_x+2(mC-\zeta_x)H+m(xC-B)\zeta_x-4mxC\zeta^2.
\end{align}
Both source terms are nonpositive. Initially
$H=x\sech^2x-\tanh x\le0$, and $H(\tau,0)=0$.
The bounds on ratios of spatial derivatives to $C$ give
$|H(\tau,x)|\le K_T(1+x)$. Since $\zeta$ is odd and smooth, its Taylor
expansion at zero has the form
\[
 \zeta(\tau,x)=a(\tau)x+b(\tau)x^3+O_T(x^5),
 \qquad H(\tau,x)=2b(\tau)x^3+O_T(x^5).
\]
Fix $\varepsilon>0$ and set
\[
 \Phi_\varepsilon(s,x)=\varepsilon e^{As}(1+x^2).
\]
Choose $A$ sufficiently large that $\Phi_\varepsilon$ is a strict
supersolution on $x>0$. Although the drift contains $-x^{-1}$, the
corresponding term
\[
 -x^{-1}\partial_x\Phi_\varepsilon=-2\varepsilon e^{As}
\]
remains bounded at zero. Apply comparison first on
$[0,\tau]\times[\delta,R]$. For $\delta$ sufficiently small, the Taylor
estimate gives $H(s,\delta)\le\Phi_\varepsilon(s,\delta)$ uniformly in
$s$, while for $R$ sufficiently large the linear growth bound on $H$
gives $H(s,R)\le\Phi_\varepsilon(s,R)$. The initial inequality follows
from $H(0,x)\le0$. Hence
\[
 H(s,x)\le\Phi_\varepsilon(s,x).
\]
Letting $\delta\downarrow0$, $R\uparrow\infty$, and then
$\varepsilon\downarrow0$ proves
\eqref{eq:log-derivative-radial}. Smooth monotone approximation
extends it to every nonincreasing coefficient $m$.

Thus $\zeta(x)/x$ is nonincreasing on $(0,\infty)$, and
\[
 0\le\zeta_x(x)\le\frac{\zeta(x)}x
 \le\zeta_x(0)=\lambda^2.
\]
If $\lambda=0$, then $\zeta$ vanishes identically, contradicting
$C>0$ and $C(x)\to0$. Hence $\lambda>0$.
The last inequality of \eqref{eq:differential-inequalities}, with $m_0=0$,
gives $\zeta_x/C\ge\lambda^2/c_0$ on $x\ge0$. This proves strict
positivity of $\zeta_x$; parity handles $x<0$.
\end{proof}

We now express the preceding differential inequalities in normalized
variables and record their elementary consequences.

\begin{lemma}\label{lem:normalized-estimates}
Let \(U\) solve \eqref{eq:reverse-profile-pde} with a nonincreasing
coefficient $m$, and evaluate its spatial derivatives at a fixed reverse
time $\tau$. Write $B(x)=U_x(\tau,x)$ and $C(x)=U_{xx}(\tau,x)$, and set
\[
 c_0=C(0),\qquad
 \lambda^2=-\frac{C_{xx}(0)}{2c_0},\qquad
 v=\lambda x,\qquad A(v)=C(v/\lambda)/c_0,\qquad
 u(v)=-\frac{C_x(v/\lambda)}{2\lambda C(v/\lambda)} .
\]
Then \(\lambda>0\), \(A'=-2Au\), \(u(0)=0\), \(u'(0)=1\), and, for \(v\ge0\),
\begin{equation}\label{eq:normalized-profile-estimates}
\begin{gathered}
 0<A\le u'\le1,\qquad
 \tanh v\le u(v)\le v,\qquad vu'(v)\le u(v),\\
 1-A\le u^2,\qquad
 \lim_{v\to\infty}u(v)=1/\lambda .
\end{gathered}
\end{equation}

Now fix \(m_0\in(0,1]\) no larger than $m$ almost everywhere on $[0,\tau]$.
Define functions of the normalized coordinate $v$ by
\[
 a(v)=\frac{m_0C(v/\lambda)}{\lambda^2},\qquad
 j(v)=u'(v)-a(v),\qquad b(v)=\frac{m_0B(v/\lambda)}{\lambda},\qquad
 \mu=\frac{m_0c_0}{\lambda^2}.
\]
Then \(0<\mu\le1\), \(a(0)=\mu\), \(j(0)=1-\mu\), and, for \(v\ge0\),
\begin{align}
 &a'=-2au,\qquad u'=a+j,\qquad b'=a,\qquad
 -2uj\le j'\le uj,\label{eq:normalized-differential-inequalities}\\
 &j\ge ka,\qquad k=\frac{1-\mu}{\mu},\qquad
 0<u'\le1,\qquad \tanh v\le u(v)\le v,
 \label{eq:normalized-log-derivative-bounds}\\
 &a\ge\mu(1-u^2),\qquad
 0\le b\le\mu u,\qquad
 b^2\le\mu^2(1-a/\mu).
 \label{eq:normalized-magnetization-bounds}
\end{align}
Returning to the original time coordinate $t=L-\tau$, the estimates involving
$a,j,b,\mu,k$ apply whenever
\[
 m(s)\ge m_0\qquad\text{for almost every }s\in[t,L].
\]
In particular, they apply with $m_0$ equal to either positive one-sided
support value approached from the interior.
\end{lemma}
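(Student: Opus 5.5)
The plan is to translate \zcref{lem:differential-inequalities} and \zcref{lem:log-derivative} into the rescaled variable $v=\lambda x$ by the chain rule, and then obtain the remaining bounds by integrating these differential inequalities from $v=0$.

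\emph{Basic normalization.} Since $C$ is even and smooth, we have $A(0)=1$, and $A'(v)=C_x(v/\lambda)/(\lambda c_0)=-2A(v)u(v)$ by the definition of $u$. Moreover $u(v)=\zeta(v/\lambda)/\lambda$, so
\[
 u'(v)=\zeta_x(v/\lambda)/\lambda^2 .
\]
Evenness of $C$ makes $\zeta$ odd, so $u(0)=0$, and $u'(0)=\zeta_x(0)/\lambda^2=1$ by the definition of $\lambda$. \zcref{lem:log-derivative} gives $\lambda>0$ and $0<u'\le1$. Its radial inequality $x\zeta_x\le\zeta$ rescales to $vu'\le u$. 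Integrating $u'\le1$ gives $u\le v$.

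\emph{The bound $A\le u'$.} Apply the last inequality of \eqref{eq:differential-inequalities} with $m_0=0$: $\zeta_x/C$ is nondecreasing on $x\ge0$. Hence $\zeta_x(x)/C(x)\ge\lambda^2/c_0$, which is exactly $u'\ge A$.

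\emph{The bounds $1-A\le u^2$ and $\tanh v\le u$.} I would compare $u$ with the solution of $y'=1-y^2$. Set $\Phi=u^2+A-1$. Then $\Phi(0)=0$ and
\[
 \Phi'=2uu'-2Au=2u(u'-A)\ge0
\]
for $v\ge0$, since $u\ge0$ there. This proves $1-A\le u^2$. It then follows that $u'\ge A\ge1-u^2$, and comparing with $\tanh$ (same initial value, $y'=1-y^2$) gives $u\ge\tanh v$.

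\emph{The limit of $u$.} Since $C(x)\to0$, we have $\int_0^\infty u=\infty$. Also $\int_0^\infty C=1$ because $B(\infty)=1$. The most natural route is the identity
\[
 u=\frac{\zeta}{\lambda},\qquad \zeta(\infty)=\lim\Bigl(-\frac{C_x}{2C}\Bigr).
\]
From \eqref{eq:profile-two-sided-tails} and \eqref{eq:profile-weighted-derivatives} the ratio $C_x/C$ is bounded, and $\zeta$ is monotone (because $\zeta_x>0$), so the limit exists. The two-sided tails $C\asymp\sech^2 x$ force $-\tfrac12\log C$ to grow like $x$, so $\zeta(\infty)=1$. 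This gives $u\to1/\lambda$. This is the step I expect to need the most care, but it only uses monotonicity together with the tail bounds.

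\emph{Statements with $m_0>0$.} The identities $a'=-2au$ and $b'=a$ follow from $C_x=-2\zeta C$ and $B_x=C$ by the chain rule, and $u'=a+j$ is the definition of $j$. Since $j(v)=\mathsf f(v/\lambda)/\lambda^2$, the inequalities $-2\zeta\mathsf f\le\mathsf f_x\le\zeta\mathsf f$ rescale to $-2uj\le j'\le uj$. The values $a(0)=\mu$ and $j(0)=1-\mu$ are immediate. The condition $\mu\le1$ follows from $j(0)\ge0$, i.e.\ $\mathsf f\ge0$, and $\mu>0$ holds because $m_0,c_0>0$. Monotonicity of $\mathsf f/C$ says $j/a$ is nondecreasing, so $j\ge j(0)a/\mu=ka$.

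For the lower bound on $a$, note that $a/\mu=A$, so $a\ge\mu(1-u^2)$ is the inequality $A\ge1-u^2$ already proved. For $b$: $b(0)=0$ and $b'=a\ge0$ give $b\ge0$, while $b'=a\le\mu u'$ (since $j\ge ka\ge0$ implies $u'\ge a/\mu$) gives $b\le\mu u$ after integration. Finally,
\[
 \bigl(b^2+\mu a\bigr)'=2ab-2\mu au=2a(b-\mu u)\le0 ,
\]
and the value at $0$ is $\mu^2$. Hence $b^2\le\mu^2-\mu a=\mu^2(1-a/\mu)$.

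Translating back to original time, the reverse-time hypothesis $m\ge m_0$ on $[0,\tau]$ becomes $m\ge m_0$ on $[t,L]$, as noted in \zcref{lem:differential-inequalities}. This covers the one-sided support values approached from the interior.
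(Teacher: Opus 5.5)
Your proof is correct and follows essentially the same route as the paper. You rescale the differential inequalities and the log-derivative lemma, get $u'\ge A$ from monotonicity of $\zeta_x/C$ with $m_0=0$, integrate $u^2+A-1$ and compare with $\tanh$, identify the limit by a Ces\`aro argument from the $\sech^2$ tail bounds, and derive $j\ge ka$, $0\le b\le\mu u$ and $b^2\le\mu(\mu-a)$ from the same derivative identities; the only cosmetic difference is that you run the limit argument for $\zeta$ in the unnormalized coordinate rather than for $u$ directly.
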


\begin{proof}
\zcref{lem:log-derivative} gives $\lambda>0$ and $u'\le1$.
Apply \zcref{lem:differential-inequalities} with $m_0=0$, so
$\mathsf f=\zeta_x$ and $\mathsf f/C\ge\lambda^2/c_0$ on $x\ge0$.
With unnormalized functions evaluated at $x=v/\lambda$, this gives
\[
 u'=\frac{\zeta_x}{\lambda^2}\ge A>0.
\]
Integrating $u'\le1$ gives $u\le v$ on $v\ge0$, and
\eqref{eq:log-derivative-radial} gives $vu'\le u$.
The function $e=u^2-1+A$ vanishes at zero and satisfies
\[
 e'=2u(u'-A)\ge0.
\]
Thus $u'\ge A\ge1-u^2$, and scalar ODE comparison gives
$u\ge\tanh v$. Finally \eqref{eq:profile-two-sided-tails} implies
\[
 -\log A(v)=2v/\lambda+O(1)=2\int_0^vu(s)\dd s.
\]
Since $u$ is increasing, this Ces\`aro limit gives
$\lim_{v\to\infty}u(v)=1/\lambda$.

For a fixed $m_0>0$ as above, put $\mathsf f=\zeta_x-m_0C$.
Then
\[
 \lambda^2=m_0c_0+\mathsf f(0),\qquad
 a=\frac{m_0C}{\lambda^2},\qquad
 j=\frac{\mathsf f}{\lambda^2},\qquad u'=a+j.
\]
Hence $0<\mu\le1$ and $j(0)=1-\mu$. The differential inequalities
in \eqref{eq:differential-inequalities} give $-2uj\le j'\le uj$, while
\[
 \frac ja=\frac{\mathsf f}{m_0C}
 \ge\frac{\mathsf f(0)}{m_0c_0}=\frac{1-\mu}{\mu}.
\]
The remaining bounds on $u$ were already proved independently of
$m_0$, and $a=\mu A\ge\mu(1-u^2)$.
Finally $a\le\mu u'$ and $b'=a$, with $b(0)=0$, give
$0\le b\le\mu u$. Therefore
\[
 (b^2)'=2ba\le2\mu ua=-\mu a',
 \qquad b^2\le\mu(\mu-a)=\mu^2(1-a/\mu).
\]
\end{proof}

\section{The diffusion density}\label{sec:diffusion-density}

We estimate the diffusion density at a fixed time before weighting it by
$C^2$. Dividing the transformed density by the Neumann heat kernel
extracts the explicit heat-kernel contribution of the starting point $h$.
The remaining ratio may still depend on $h$; we prove that its negative
logarithm is nondecreasing and concave as a function of squared position.

For the rest of the manuscript, we use the original Parisi time $t$
and the notation $U,m,B,C$ of \zcref{sec:variation}. Fix $0<t\le L$,
$h\ge0$, and $m_*\in[0,1]$ such that the nondecreasing coefficient on
$[0,t)$ is at most $m_*$ and the coefficient on $(t,L]$ is at least
$m_*$, almost everywhere. Keep the restriction of $m$ to $(t,L]$,
and hence the resulting solution $U(t,\cdot)$, fixed. On $[0,t)$ use
the right-continuous representative of $m$ and assign $m(t)=m_*$. Its
Stieltjes measure $dm$ on $(0,t]$ includes any final jump. Let $p_h(t,x)$
be the density of $Y_t$ for
\begin{equation}\label{eq:mc-lower-sde}
 \dd Y_r=m(r)B(r,Y_r)\dd r+\dd W_r,\qquad Y_0=h.
\end{equation}

We use the Brownian-bridge representation and the construction of the
transformed density in \cite[Proposition~7.1, (7.2)]{Lopatto2026}, with
initial point $h$ and rescaled time. The additional estimate is concavity of the
negative logarithm of the ratio to the Neumann heat kernel, as a function
of squared position. We prove the estimate first for smooth nondecreasing
coefficients and then pass to the limit in the bridge representation.

\begin{lemma}\label{lem:lower-density}
In the preceding setting, $p_h(t,\cdot)$ is strictly positive and smooth.
Let $K_t(x)=(2\pi t)^{-1/2}e^{-x^2/(2t)}$, and define $k_h$ and $\Psi$ by
\begin{equation}\label{eq:transformed-density-ratio}
 \begin{aligned}
 k_h(t,x)&=K_t(x-h)+K_t(x+h),\\
 e^{-m_*U(t,x)}[p_h(t,x)+p_h(t,-x)]&=k_h(t,x)e^{-\Psi(t,x^2)}.
 \end{aligned}
\end{equation}
The function $k_h$ is the Neumann heat kernel on the half-line, extended
evenly in $x$. The function $\Psi(t,\cdot)$ is smooth on $[0,\infty)$ and
\begin{equation}\label{eq:lower-density-concavity}
 \Psi_r(t,r)\ge0,\qquad \Psi_{rr}(t,r)\le0\qquad(r\ge0),
\end{equation}
with one-sided derivatives at zero. More generally, for every $t_0>0$ and
positive integer $k$, the $k$th spatial derivative of $\Psi(t,x^2)$ is
bounded uniformly over $t_0\le t\le L$, $x\in\R$, and all coefficients $m$
satisfying the preceding monotonicity and endpoint conditions. The bound
depends only on $L,h,k,t_0$.

For any $\lambda>0$, weighting the law of $|Y_t|$ by $C(t,Y_t)^2$ and
rescaling by $v=\lambda x$ gives
\begin{equation}\label{eq:mc-rho-density}
 \rho(\dd v)=\frac1{\mathcal Z}\omega(v)e^{-dv^2/2-\psi(v^2)}
                  \cosh(\alpha v)\dd v,\qquad v\ge0,
\end{equation}
where $0<\mathcal Z<\infty$ and
\begin{equation}\label{eq:mc-alpha-d}
 \begin{gathered}
 \omega(v)=C(t,v/\lambda)^2e^{m_*U(t,v/\lambda)},\qquad
 d=\frac1{\lambda^2t},\qquad \alpha=\frac h{\lambda t},\\
 \psi(r)=\Psi(t,r/\lambda^2).
 \end{gathered}
\end{equation}
Define the even function
\begin{equation}\label{eq:coefficient-s}
 s(v)=d+2\psi'(v^2).
\end{equation}
It satisfies
\begin{equation}\label{eq:coefficient-s-bounds}
 d\le s(v)\le s(0)<\infty,\qquad s'(v)\le0\qquad(v\ge0).
\end{equation}
\end{lemma}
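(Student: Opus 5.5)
The plan is to prove everything first for smooth nondecreasing coefficients, and only at the end pass to general $m$ through the Brownian-bridge representation.

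\textbf{Step 1: bridge representation.} For smooth $m$, write the drift as $m(r)B=\partial_x(mU)$. It\^o's formula and \eqref{eq:parisi-time} give
\[
 \dd(mU)(r,W_r)=m'U\dd r+mB\dd W_r-\tfrac12m^2B^2\dd r.
\]
So the Girsanov density of \eqref{eq:mc-lower-sde} with respect to Brownian motion started at $h$ is
\[
 \exp\Bigl(m(t)U(t,W_t)-m(0)U(0,h)-\int_0^tU(r,W_r)\,\dd m(r)\Bigr).
\]
For a general right-continuous $m$, the Stieltjes form includes the final jump to $m_*$. Conditioning on $W_t=\pm x$ and using evenness of $U$ gives
\[
 e^{-m_*U(t,x)}[p_h(t,x)+p_h(t,-x)]
 =e^{-m(0)U(0,h)}\bigl[K_t(x-h)\,\Xi_+(x)+K_t(x+h)\,\Xi_-(x)\bigr],
\]
where $\Xi_\pm(x)$ are Brownian-bridge expectations from $h$ to $\pm x$ of $\exp(-\int_0^tU(r,|X_r|)\dd m(r))$.

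This display has several consequences. It gives strict positivity and smoothness of $p_h$. It identifies $\Psi$ as $-\log$ of a convex combination of $\Xi_\pm$, shifted by the constant $m(0)U(0,h)$. Because the bridge can be written as $X_r=\pm\tfrac rt x+(1-\tfrac rt)h+\text{(pinned noise)}$, differentiating in $x$ under the expectation and using the bounds of \zcref{lem:pde-regularity} on spatial derivatives of $U$ gives the uniform derivative bounds for $\Psi(t,x^2)$ on $t\ge t_0$. Since these bounds depend only on $L,h,k,t_0$ and $\int\dd m\le1$, the $L^1$ stability \eqref{eq:profile-stability} together with dominated convergence in the bridge formula transfers smoothness, the derivative bounds, and the two inequalities \eqref{eq:lower-density-concavity} from smooth to general coefficients. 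The final assignment $m(t)=m_*$ is handled by the same formula.

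\textbf{Step 2: signs by parabolic comparison (main step).} For smooth $m$, set $g=e^{-mU}[p_h(\cdot,x)+p_h(\cdot,-x)]$. A direct computation from the forward equation $p_t=\tfrac12p_{xx}-(mBp)_x$ and the Parisi PDE gives $g_\sigma=\tfrac12g_{xx}-m'Ug$. Writing $g=k_he^{-\Phi}$ with $\Phi(\sigma,r)$ and $r=x^2$, and using that $k_h$ solves the heat equation, I get
\[
 \Phi_\sigma=\tfrac12\Phi_{xx}-\tfrac12\Phi_x^2+\frac{(k_h)_x}{k_h}\Phi_x+m'U.
\]
In the variable $r$ this becomes a parabolic equation with drift coefficients built from $(k_h)_x/(xk_h)=-1/\sigma+(h/(\sigma x))\tanh(hx/\sigma)$. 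Differentiating once in $r$, the equation for $\Phi_r$ has source $m'\,\partial_rU(\sigma,\sqrt r)\ge0$, because $B\ge0$ on $x\ge0$. Its initial data vanish in the small-time limit, since $g/k_h\to1$ by the bridge formula. Comparison with a quadratic barrier, as in \zcref{lem:log-derivative}, then gives $\Phi_r\ge0$.

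Differentiating again, the equation for $\Phi_{rr}$ has sources of three kinds:
\begin{itemize}
\item $m'\,\partial_r^2U(\sigma,\sqrt r)$, which is $\le0$ because $\partial_r^2U(\sigma,\sqrt r)=(xC-B)/(4x^3)\le0$, using $B\ge xC$ from $C_x\le0$;
\item terms $\Phi_r\cdot\partial_r\bigl[(k_h)_x/(xk_h)\bigr]$, which are $\le0$ because $\tanh(y)/y$ is decreasing and $\Phi_r\ge0$;
\item quadratic terms in $\Phi_{rr}$, which I treat as a bounded reaction coefficient.
\end{itemize}
Comparison then gives $\Phi_{rr}\le0$.

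\textbf{Step 3: the rescaled density and $s$.} Since $k_h(t,v/\lambda)\propto e^{-dv^2/2}\cosh(\alpha v)$ with $d,\alpha$ as in \eqref{eq:mc-alpha-d}, multiplying by $C(t,v/\lambda)^2$ and changing variables gives \eqref{eq:mc-rho-density}. The constant $\mathcal Z$ is finite because $C\le C_t\sech^2$ and $U$ grows at most linearly, so $\omega$ is bounded by $e^{O(|v|)}$ against the Gaussian factor. Finally, $s=d+2\psi'\ge d$, $s'(v)=4v\psi''(v^2)\le0$, and $s(0)<\infty$ by the derivative bounds; together these give \eqref{eq:coefficient-s-bounds}.

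The step I expect to be hardest is the $\Phi_{rr}$ computation in Step 2. I have to check that every source term has the right sign once $\Phi_r\ge0$ is known, and I have to make the comparison rigorous near $r=0$ and near $\sigma=0$, where the kernel $k_h$ and the ratio $(k_h)_x/(xk_h)$ are singular. If the PDE route is too delicate at the singular points, the fallback is to work directly with the bridge formula, viewing $-\log$ of the mixture as a function of $r$. That route needs a log-convexity argument, but I expect it to be much more involved.
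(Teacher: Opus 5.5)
Your outline follows the paper's route: the bridge formula, $g_\sigma=\tfrac12g_{xx}-m'Ug$, the ratio to $k_h$ in $r=x^2$, comparison first for $\Phi_r$ and then for $\Phi_{rr}$, and passage to general $m$ through the bridge formula. However, the initialization of the comparison in Step~2 fails when $h>0$. Write the $r$-form of your equation as $\Phi_\sigma=2r\Phi_{rr}+\mathfrak d\,\Phi_r-2r\Phi_r^2+m'U$, with $\mathfrak d=1+2r\varkappa$, $\varkappa=(k_h)_x/(xk_h)$ and $\alpha_\sigma=h/\sigma$.

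You claim the data of $\Phi_r$ vanish as $\sigma\downarrow0$ because $g/k_h$ tends to a constant. That convergence does not control $r$-derivatives near $r=0$, because the weights $w_\pm=\tfrac12(1\pm\tanh(hx/\sigma))$ in your mixture $F=w_+\Xi_++w_-\Xi_-$ vary on the scale $x\sim\sigma/h$. Concretely:
\begin{itemize}
\item Since $F$ is even, $\Phi_r(\sigma,0)=-F''(0)/(2F(0))$, and $F''(0)=\tfrac{2h}{\sigma}\Xi_+'(0)+\Xi_+''(0)$.
\item For smooth $m$ with $m'(0)>0$, differentiating under the bridge expectation gives $\Xi_+'(0)=-\sigma m'(0)c_h+o(\sigma)$, where $c_h=\int_0^1 sB(0,(1-s)h)\dd s>0$.
\item Hence $\Phi_r(\sigma,0)\to h\,m'(0)c_h>0$, not $0$.
\item The $\tanh'''$ term in the fourth-order expansion makes $\Phi_{rr}(\sigma,0)$ of order $-h^3m'(0)c_h\sigma^{-2}$, which tends to $-\infty$.
\end{itemize}
The coefficients are also singular there: $\mathfrak d_r=2h^2/\sigma^2-2/\sigma$ at $r=0$. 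A comparison started at a small time $\sigma_0$ from only approximately signed data would therefore lose a Gronwall factor of order $e^{2h^2/\sigma_0}$. As written, Step~2 has no valid initial condition.

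The missing idea, which is what the paper does, is to take the smooth nondecreasing approximations \emph{constant on an initial interval} $[0,\sigma_1]$.
\begin{itemize}
\item Your bridge-formula limit allows this: approximate $\dd m$ weakly by smooth densities supported in $(0,t)$, keeping $m(0)$.
\item On that interval $\int_{(0,\sigma]}U\,\dd m=0$. So $g=e^{-m(0)U(0,h)}k_h$ exactly, and $\Phi_r=\Phi_{rr}=0$ at some $\sigma_0>0$.
\item Run the comparison on $[\sigma_0,T]\times[0,\infty)$, where all coefficients are bounded.
\item At infinity use $\Phi_r=O(r^{-1/2})$ and $\Phi_{rr}=O(r^{-1})$, which follow from your bounds on $x$-derivatives.
\item At $r=0$ the second-order term degenerates. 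The first-order coefficients $2+\mathfrak d=3$ and $4+\mathfrak d=5$ are positive, which gives the correct sign at a boundary extremum.
\item Then let $T\uparrow t$.
\end{itemize}

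A smaller correction concerns your sources for $\Phi_{rr}$. The term linear in $\Phi_r$ is $\mathfrak d_{rr}\Phi_r$ with $\mathfrak d_{rr}=4\varkappa_r+2r\varkappa_{rr}$. Monotonicity of $\tanh y/y$ gives only $\varkappa_r\le0$, and $\varkappa_{rr}>0$ near $r=0$, so the sign must be checked for the combination. It holds because $\mathfrak d_r=-2/\sigma+\alpha_\sigma^2(\tanh z/z+\sech^2z)$ with $z=\alpha_\sigma\sqrt r$, and both summands decrease in $z$. The term $-4r\Phi_{rr}^2$ is nonpositive and is simplest to treat as a favorable source rather than a reaction term.
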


\begin{proof}
The bridge formula supplies positivity, spatial regularity, and convergence
under approximation. We establish it first and then prove the two signs in
\eqref{eq:lower-density-concavity} by a maximum principle.

\medskip
\emph{Step 1: The bridge representation.}
First assume that $m$ is smooth on the time interval under consideration,
while the coefficient on $(t,L]$ and hence the terminal function
$U(t,\cdot)$ are kept fixed. The differentiated PDE and
\zcref{lem:pde-regularity} give the time and spatial regularity required for
classical It\^o's formula. Along reference Brownian motion
$\mathsf X_r=h+W_r$ it gives
\[
 \dd U(r,\mathsf X_r)=B(r,\mathsf X_r)\dd W_r
                 -\frac12m(r)B(r,\mathsf X_r)^2\dd r.
\]
Here $B$ is bounded and $U$ has at most linear spatial growth, so localization
and passage to the interval endpoints are justified. The product rule for
$mU$ therefore gives
\[
 \int_0^t mB\dd W-\frac12\int_0^t m^2B^2\dd r
 =m_*U(t,\mathsf X_t)-m(0)U(0,h)
       -\int_{(0,t]}U(r,\mathsf X_r)\dd m(r).
\]
The left side is the logarithm of the Girsanov density. Since $|mB|\le1$,
Novikov's condition holds. Let $\E_{h\to x}^{\mathrm{bridge}}$ denote
expectation for a Brownian bridge on $[0,t]$ from $h$ to $x$.
Conditioning on $\mathsf X_t=x$ yields
\begin{equation}\label{eq:lower-bridge-formula}
 \begin{split}
 p_h(t,x)={}&K_t(x-h)e^{m_*U(t,x)-m(0)U(0,h)} \E_{h\to x}^{\mathrm{bridge}}
       \exp\left(-\int_{(0,t]}U(r,\mathsf X_r)\dd m(r)\right).
 \end{split}
\end{equation}
The same formula holds for a nondecreasing coefficient of bounded variation.
To justify the passage, extend $\dd m$ from $(0,t]$ to a finite measure
on $[0,t]$ by assigning zero mass to $\{0\}$, and choose smooth
nondecreasing approximations such that 
$\dd m_n\Rightarrow\dd m$ weakly as finite measures on $[0,t]$, with the
prescribed initial value and total mass. Equivalently, the smooth densities
may be supported in $(0,t)$ while approximating a possible terminal atom.
Keep the coefficient on $(t,L]$ fixed in every approximation, so the terminal
function $U(t,\cdot)$ is unchanged. Then $m_n\to m$ in $L^1$, and the PDE
stability estimates and a coupling with the same Brownian motion give
convergence of the diffusion laws. On each continuous bridge path,
\[
 \begin{aligned}
 &\left|\int_{(0,t]}U_n(r,\mathsf X_r)\dd m_n(r)
       -\int_{(0,t]}U(r,\mathsf X_r)\dd m(r)\right|\\
 &\qquad\le
 \sup_{0\le r\le t}|U_n(r,\mathsf X_r)-U(r,\mathsf X_r)|
       \,[m_n(t)-m_n(0)]
 +\left|\int_{(0,t]}U(r,\mathsf X_r)\dd(m_n-m)(r)\right|.
 \end{aligned}
\]
The first term tends to zero because the path has compact range and the
solutions converge uniformly there; the second tends to zero by weak
convergence against the continuous function
$r\mapsto U(r,\mathsf X_r)$. The integrands in the bridge expectation are
at most one because $U_n\ge0$. The prefactor in
\eqref{eq:lower-bridge-formula} is bounded by a constant times
$K_t(x-h)e^{|x|}$, uniformly in the approximations. Dominated convergence
in $x$ identifies the limit with the density of $Y_t$. A final jump is
included in this argument: its contribution $\Delta mU(t,x)$ to the
Stieltjes integral cancels the corresponding change in
$e^{m_*U(t,x)}$.

A bridge from $h$ to $x$ has the form
$\mathsf X_r=(1-r/t)h+(r/t)x+\mathfrak b_r$, where
$\mathfrak b$ is a centered Brownian bridge independent of $x$. For
$V_x=\int_{(0,t]}U(r,\mathsf X_r)\dd m(r)$,
\[
 \partial_x^kV_x=\int_{(0,t]}(r/t)^k
       \partial_x^kU(r,\mathsf X_r)\dd m(r),\qquad
 |\partial_x^kV_x|\le C_{L,k}\quad(k\ge1).
\]
Here $m(t)-m(0)\le1$, and \zcref{lem:pde-regularity} bounds every fixed
spatial derivative. Derivatives of $e^{-V_x}$ are $e^{-V_x}$ times
polynomials in these bounded derivatives. Differentiating under the expectation therefore bounds each derivative of
the bridge expectation by a constant times the expectation itself. Its
logarithmic
derivatives are consequently bounded.

Oddness of the drift identifies $p_h(t,-x)$ with the density at $x$
of the same diffusion started at $-h$. Thus, for the ratio in
\eqref{eq:transformed-density-ratio}, combine the bridge factors
starting at $h$ and $-h$ with weights $e^{hx/t}$ and $e^{-hx/t}$,
and divide by $2\cosh(hx/t)$. Derivative ratios of a positive sum are
weighted averages of the corresponding derivative ratios. Thus all fixed
positive-order derivatives of $\Psi(t,x^2)$ are bounded, uniformly away
from $t=0$. The function is even and smooth in $x$. Taylor's formula for
an even smooth function gives a smooth function of $r=x^2$ on the closed
half-line, with one-sided derivatives at zero.

\medskip
\emph{Step 2: The differential inequalities.}
First assume that $m$ is smooth, nondecreasing, and constant near zero.
Use $\sigma$ for running forward time and put
\[
 g(\sigma,x)=e^{-m(\sigma)U(\sigma,x)}
                   [p_h(\sigma,x)+p_h(\sigma,-x)],\qquad
 \mathcal U(\sigma,r)=U(\sigma,\sqrt r).
\]
We first record the joint regularity needed below. Fix
$0<\delta<T<t$. For smooth $m$, represent the Brownian bridge from $h$ to
$x$ over time $\sigma$ on one unit-time bridge by
\[
 \mathsf X_{\sigma u}=(1-u)h+ux+\sqrt\sigma\,\mathfrak b_u,
 \qquad 0\le u\le1.
\]
After the change of variables $r=\sigma u$, the bridge representation,
the differentiated PDE, and the bounds in \zcref{lem:pde-regularity}
permit dominated differentiation in $(\sigma,x)$ locally on
$[\delta,T]\times\R$. Thus every spatial derivative of $p_h$ is jointly
continuous there. The SDE gives the forward equation in the weak sense; its
right-hand side and all required spatial derivatives are jointly continuous,
so the equation is classical and, for every $k\ge0$,
\begin{equation}\label{eq:density-joint-regularity}
 \partial_\sigma\partial_x^k p_h
 =\frac12\partial_x^{k+2}p_h
  -\partial_x^{k+1}\!\left(mB\,p_h\right)
 \qquad (\delta\le\sigma\le T).
\end{equation}
Consequently the substitutions and spatial differentiations below are
classical on every positive-time strip. Evenness and the spatial Taylor
expansions give the corresponding one-sided values after passing to
$r=x^2=0$.

Since $B$ is odd, the sum $p_h(\sigma,x)+p_h(\sigma,-x)$ satisfies
the same forward density equation as $p_h$. Substituting this sum as
$e^{mU}g$ gives
\[
 g_\sigma+(\dot m U+mU_\sigma)g
 =\frac12g_{xx}-\frac m2(C+mB^2)g.
\]
Using $U_\sigma=-(C+mB^2)/2$ cancels the terms containing $m$ and leaves
\begin{equation}\label{eq:transformed-density-evolution}
 g_\sigma=\frac12g_{xx}-\dot mUg.
\end{equation}
Since $k_h$ solves the heat equation, division by $k_h$ and passage to the
negative logarithm give, with $r=x^2$ and $\alpha_\sigma=h/\sigma$,
\begin{equation}\label{eq:lower-ratio-pde}
 \begin{gathered}
 \Psi_\sigma=2r\Psi_{rr}+\mathfrak d\Psi_r-2r\Psi_r^2+
                              \dot m\mathcal U,\\
 \mathfrak d(\sigma,r)=1-\frac{2r}{\sigma}
       +2\alpha_\sigma\sqrt r\tanh(\alpha_\sigma\sqrt r).
 \end{gathered}
\end{equation}
Set $P_\Psi=\Psi_r$ and $Z_\Psi=\Psi_{rr}$. Differentiation yields
\begin{align}
 (P_\Psi)_\sigma={}&2r(P_\Psi)_{rr}
 +(2+\mathfrak d-4rP_\Psi)(P_\Psi)_r
 +\mathfrak d_rP_\Psi-2P_\Psi^2+\dot m\mathcal U_r,
 \label{eq:lower-ratio-first}\\
 (Z_\Psi)_\sigma={}&2r(Z_\Psi)_{rr}
 +(4+\mathfrak d-4rP_\Psi)(Z_\Psi)_r
 +(2\mathfrak d_r-8P_\Psi)Z_\Psi\notag\\
 &-4rZ_\Psi^2+\mathfrak d_{rr}P_\Psi+\dot m\mathcal U_{rr}.
 \label{eq:lower-ratio-second}
\end{align}
For $x=\sqrt r>0$, the inequalities $C_x\le0$ and $B\ge xC$ imply
\begin{equation}\label{eq:squared-profile-concavity}
 \mathcal U_r=\frac B{2x}\ge0,\qquad
 \mathcal U_{rr}=\frac{xC-B}{4x^3}\le0.
\end{equation}
The remaining source term also has the required sign. With $z=\alpha_\sigma x$,
\begin{equation}\label{eq:lower-heat-coefficient}
 \mathfrak d_{rr}=\frac{\alpha_\sigma}{2x^3}
 [z\sech^2z-\tanh z-2z^2\sech^2z\tanh z]\le0.
\end{equation}
Indeed, $\tanh z-z\sech^2z$ vanishes at zero and has derivative
$2z\sech^2z\tanh z\ge0$. These signs extend to $r=0$ by smoothness.
When $h=0$, $\mathfrak d=1-2r/\sigma$ and $\mathfrak d_{rr}=0$.

On the first interval where $m$ is constant, $g$ is a constant multiple
of $k_h$, so $P_\Psi=Z_\Psi=0$. Choose $\sigma_0>0$ within this initial
interval. For any $T\in(\sigma_0,t)$, apply comparison on
$[\sigma_0,T]\times[0,\infty)$, with
$P_\Psi(\sigma_0,\cdot)=Z_\Psi(\sigma_0,\cdot)=0$.
The logarithmic derivative bounds above imply, uniformly in time,
\[
 P_\Psi(\sigma,r)=O(r^{-1/2}),\qquad
 Z_\Psi(\sigma,r)=O(r^{-1})
 \qquad (r\to\infty).
\]
Indeed, if $H(x)=\Psi(\sigma,x^2)$, then
\[
 P_\Psi=\frac{H_x}{2x},\qquad
 Z_\Psi=\frac{xH_{xx}-H_x}{4x^3},
\]
and $H_x,H_{xx}$ are uniformly bounded on the positive-time strip. 
At zero, the same bounds and evenness control their limits. Also
$\mathfrak d_r$ is bounded on this time interval.

Write $P=P_\Psi$ and $Z=Z_\Psi$. On
$[\sigma_0,T]\times[0,\infty)$, the reaction coefficients
\[
 c_P=\mathfrak d_r-2P,\qquad c_Z=2\mathfrak d_r-8P
\]
are bounded, and $P,Z\to0$ as $r\to\infty$, uniformly in time.
Choose $K$ larger than the absolute values of both reaction
coefficients on this strip.

To prove $P\ge0$, set
\[
 P_\varepsilon=P+\varepsilon e^{K(\sigma-\sigma_0)}.
\]
This function is positive initially and at spatial infinity. If it first
vanished at an interior point, then there
$(P_\varepsilon)_r=0$, $(P_\varepsilon)_{rr}\ge0$, and
$(P_\varepsilon)_\sigma\le0$. Since $P<0$ at that point and
$\dot m\mathcal U_r\ge0$, equation
\eqref{eq:lower-ratio-first} and the choice of $K$ instead give
$(P_\varepsilon)_\sigma>0$, a contradiction. At $r=0$, the
second-order term vanishes, the first-order coefficient equals $3$, and
a boundary minimum has $(P_\varepsilon)_r\ge0$, so the same contradiction
holds there. Letting $\varepsilon\downarrow0$ gives $P\ge0$.

Now set
\[
 Z_\varepsilon=Z-\varepsilon e^{K(\sigma-\sigma_0)}.
\]
Once $P\ge0$, the terms
$\mathfrak d_{rr}P$, $\dot m\mathcal U_{rr}$, and $-4rZ^2$ in
\eqref{eq:lower-ratio-second} are nonpositive. If $Z_\varepsilon$
first reached zero at an interior maximum, the equation and the choice
of $K$ would give $(Z_\varepsilon)_\sigma<0$, contradicting the
first-contact inequality $(Z_\varepsilon)_\sigma\ge0$. At $r=0$, the
first-order coefficient equals $5$ and a boundary maximum has
$(Z_\varepsilon)_r\le0$, so the same argument applies. Since
$Z_\varepsilon<0$ at spatial infinity, every possible first contact
occurs at finite $r$. Letting $\varepsilon\downarrow0$ gives $Z\le0$. 
The initial constant interval avoids the time-zero singularity. Letting
$T\uparrow t$ and using continuity gives the same signs at time $t$.

Finally use the smooth nondecreasing approximations from the bridge
argument, chosen constant near zero. Pointwise convergence of their
positive densities gives $\Psi_n(t,r)\to\Psi(t,r)$. Monotonicity and
concavity are preserved by this limit, and smoothness was already obtained
from \eqref{eq:lower-bridge-formula}. Multiplication by
$C(t,x)^2e^{m_*U(t,x)}$ and rescaling yield \eqref{eq:mc-rho-density}.
Since $\psi(r)\ge\psi(0)$, $C\le1$, and $|U_x|\le1$, the Gaussian
factor makes $\mathcal Z$ finite; it is positive because the density is
positive. The two signs for $\psi$ give
\eqref{eq:coefficient-s-bounds}.
\end{proof}

\section{Gaussian comparison and parameter bounds}\label{sec:gaussian-comparison}

The condition $N(t)\le0$ bounds the combinations
$\alpha=h/(\lambda t)$ and $d=1/(\lambda^2t)$.
We first compare the normalized second derivative of the Parisi PDE with
a Gaussian function. An explicit Gaussian estimate then yields bounds
for the original diffusion by comparison with reflected Brownian motion. These bounds
will control the $\theta$-dependent terms in the moment inequalities.

\subsection{Comparison with a Gaussian function}

Fix the PDE solution at time $t$, determined by a nondecreasing
coefficient on $[t,L]$. Write $B(x)=B(t,x)$ and $C(x)=C(t,x)$,
suppressing this fixed time. Put
\begin{equation}\label{eq:profile-normalization}
 {c_0}=C(0),\qquad \lambda^2=-\frac{C_{xx}(0)}{2{c_0}},\qquad
 A(v)=\frac{C(v/\lambda)}{c_0},\qquad
 u(v)=-\frac{A'(v)}{2A(v)},\qquad {I}(v)=\int_0^v A(y)\dd y.
\end{equation}
The normalization parameter $\lambda$ is positive. \zcref{lem:log-derivative,lem:normalized-estimates} give, for $v\ge0$,
\begin{equation}\label{eq:comparison-profile-estimates}
 \begin{gathered}
 u(0)=0,\quad u'(0)=1,\quad 0<u'\le1,\quad
 \tanh v\le u(v)\le v,\quad vu'(v)\le u(v),\\
 \lim_{v\to\infty}u(v)=1/\lambda<\infty.
 \end{gathered}
\end{equation}
In particular, $e^{-v^2}\le A(v)\le\sech^2v$ and ${I}(v)\le1$.

The functional below has the opposite sign from the expectation in the Brownian
comparison for $N$; this relation is used in \zcref{prop:parameter-bounds}.

\begin{lemma}\label{lem:auxiliary-functional}
For $d>0$ and $0<\alpha\le1$, set
$\mathfrak w(v)=e^{-dv^2/2}\cosh(\alpha v)$.
For a continuous function $P:[0,\infty)\to\R$ for which the
following integral converges absolutely, define
\[
 \mathcal E_\alpha[P]=\int_0^\infty
 \left[P(v)^2-d\left(\int_0^vP(y)\dd y\right)^2\right]
 \mathfrak w(v)\dd v.
\]
If $A_G(v)=e^{-v^2}$ and $\mathcal E_\alpha[A_G]<0$, then
$\mathcal E_\alpha[A]<0$ for every $A$ obtained by normalizing
$U_{xx}(t,\cdot)$ as in \eqref{eq:profile-normalization}.
\end{lemma}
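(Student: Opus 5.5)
We follow the interpolation strategy outlined in Step 2 of the introduction. Put $A_\varepsilon=A^{1-\varepsilon}e^{-\varepsilon v^2}$ for $0\le\varepsilon\le1$, with $I_\varepsilon(v)=\int_0^vA_\varepsilon$, and $E(\varepsilon)=\mathcal E_\alpha[A_\varepsilon]$.

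\emph{Regularity of $E$.} Since $e^{-v^2}\le A\le\sech^2v$, each $A_\varepsilon$ lies between $e^{-v^2}$ and $\sech^2 v$, and $I_\varepsilon\le1$. The weight $\mathfrak w$ is integrable, so $E$ is continuous. It is differentiable in $\varepsilon$ by dominated convergence, because $\partial_\varepsilon A_\varepsilon=A_\varepsilon\ell$ with $\ell=-v^2-\log A$ and $0\le\ell\le v^2$. The corresponding logarithmic derivative is $u_\varepsilon=(1-\varepsilon)u+\varepsilon v$. It satisfies $vu_\varepsilon'\le u_\varepsilon$ and $u_\varepsilon(0)=0$, $u_\varepsilon'(0)=1$. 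Hence every $A_\varepsilon$ belongs to the same class, and it suffices to prove the following: for any profile $A$ in this class with $\mathcal E_\alpha[A]=0$, the derivative at $\varepsilon=0$ is positive. Note also that $\ell'=2(u-v)\le0$, so $\ell\le0$. (Indeed $\ell=\log(e^{-v^2}/A)\le 0$; the earlier bound $0\le\ell$ was a sign slip, and it is the nonpositive function $-\ell=2\int_0^v(y-u)\,dy\ge0$ that we use.) This $-\ell$ is nondecreasing and convex-like.

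\emph{Derivative and kernel.} Write $D(v)=-\ell(v)\ge0$, which is nondecreasing. We have $\partial_\varepsilon A_\varepsilon=-DA$, so
\[
E'(0)=-2\int_0^\infty\Bigl[A^2D-dI\int_0^vAD\Bigr]\mathfrak w\,dv .
\]
Apply Fubini to the second term: $\int_0^\infty I(v)\bigl(\int_0^v AD\bigr)\mathfrak w\,dv=\int_0^\infty A(y)D(y)R(y)\,dy$, where $R(y)=\int_y^\infty I\mathfrak w$. This gives
\[
E'(0)=2\int_0^\infty A(y)D(y)\,k(y)\,dy,\qquad k(y)=dR(y)-A(y)\mathfrak w(y).
\]
Do the same for the constraint $\mathcal E_\alpha[A]=0$. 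Write $d\int I^2\mathfrak w=\int A\cdot 2R\,dy$ after integrating by parts, using $(I^2)'=2AI$. This makes $0=\int_0^\infty A\,(A\mathfrak w-2dR^\sharp)$ for a suitable primitive, which we arrange to read $\int A\,k\,dy=0$ up to a positive factor. We adjust the kernel so that the same $k$ appears in both identities, by rewriting the $I^2$ term with $\int_0^v A\cdot I$.

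\emph{Single sign change.} The core claim is that $k$ changes sign exactly once on $(0,\infty)$, from negative to positive. For small $y$ we have $k(0)=d\int I\mathfrak w-1$, whose sign we must check. The tail behaviour is governed by the ratio $A\mathfrak w/R$, which tends to zero when $A$ is at least Gaussian-like relative to $\mathfrak w$. Granting the single sign change at $y_0$, the argument is a Chebyshev-type step. Since $D$ is nondecreasing and $\int Ak=0$, we get $\int ADk\ge D(y_0)\int Ak=0$. The inequality is strict unless $D$ is constant on the support, that is $D\equiv0$, which means $A=A_G$ already.

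\emph{Main obstacle.} The hard step is proving that $k$, equivalently $(A\mathfrak w)/R$, crosses the level $d$ only once. The plan is to show that $\log(A\mathfrak w)$ is decreasing faster than $\log R$ past any crossing. Its derivative is $-2u-dv+\alpha\tanh(\alpha v)$. At a crossing, compare this with $-I\mathfrak w/R$. Here we would use $vu'\le u$, so that $u(v)/v$ is nonincreasing, together with $\alpha\le1$ and $\tanh v\le u$, which gives $\alpha\tanh(\alpha v)\le u$. This reduces the claim to a one-variable monotonicity inequality for $I\mathfrak w/R$ versus $u+dv$.
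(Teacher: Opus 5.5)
Your outer structure is the paper's: the same interpolation $A_\varepsilon$, and the same Fubini rewriting $E'(0)=2\int_0^\infty ADk\,\dd v$ with $k=dR-A\mathfrak w$ (this $k$ is the paper's $-\mathcal K$). You also have the same Chebyshev step using that $D=-\ell_A$ is nondecreasing, with strictness because $A\neq A_G$ at a zero. The constraint needs no kernel adjustment: the same Fubini gives exactly $\mathcal E_\alpha[A]=\int_0^\infty A(A\mathfrak w-dR)\,\dd v=-\int_0^\infty Ak\,\dd v$, with no factor $2$.

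The gap is that the single sign change of $k$, which is the real content of the lemma, is only granted, and the route you sketch cannot deliver it. Write $\mathcal S=I/A$ and $b_\alpha=\alpha\tanh(\alpha v)$. At a point where $A\mathfrak w=dR$ one has $I\mathfrak w/R=d\mathcal S$, so
\[
 \bigl(\log(A\mathfrak w/R)\bigr)'=d(\mathcal S-v)-(2u-b_\alpha).
\]
A crossing in the required direction therefore needs $d<H_A:=(2u-b_\alpha)/(\mathcal S-v)$ at that point. Since $H_A\to0$ at infinity, this inequality fails for large $v$. So no inequality checked only at crossing points can work; you need global information about where crossings can occur. Likewise, the sign of $k(0)$ that you say must be checked is not verified directly in a working argument; it is forced.

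The missing idea is to differentiate the kernel itself:
\[
 k'=-A\mathfrak w(\mathcal S-v)(d-H_A).
\]
Then show that $H_A$ decreases strictly from $+\infty$ to $0$. Near zero, $\mathcal S-v\sim 2v^3/3$ and $u\sim v$. At infinity, $\mathcal S\ge I\cosh^2v$ grows exponentially while $u\le v$.

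For the monotonicity, $\mathcal S'=1+2u\mathcal S$ makes $\mathcal S$ convex with $\mathcal S(0)=0$. Hence $\mathcal S(t)\le t\mathcal S(v)/v$, and so $\mathcal S-v=2\int_0^vu\mathcal S<vu\mathcal S$. Combined with $vu'\le u$, this gives $u'(\mathcal S-v)<u^2\mathcal S$, and then
\[
 (\mathcal S-v)^2H_A'<-2u\mathcal S(u-b_\alpha)-b_\alpha'(\mathcal S-v)\le0.
\]
The last step uses $b_\alpha\le\tanh v\le u$, which is where $\alpha\le1$ enters, and $b_\alpha'\ge0$.

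Consequently $k$ strictly increases and then strictly decreases to its limit $0$, so $k>0$ on the final interval. The constraint $\int_0^\infty Ak\,\dd v=0$ then forces $k(0)<0$ and exactly one zero $v_0$. That is precisely the sign pattern your Chebyshev step requires. Without this argument, or an equivalent global one, the proof is incomplete.
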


\begin{proof}
Since the functional is negative at the Gaussian endpoint, it suffices to
show that its $\varepsilon$-derivative is strictly positive whenever it
vanishes along the following interpolation:
\begin{equation}\label{eq:mc-geometric-interpolation}
 A_\varepsilon(v)=A(v)^{1-\varepsilon}e^{-\varepsilon v^2},\qquad
 u_\varepsilon(v)=(1-\varepsilon)u(v)+\varepsilon v,
 \qquad 0\le\varepsilon\le1.
\end{equation}
Write $I_\varepsilon(v)=\int_0^v A_\varepsilon(y)\dd y$.
Every $u_\varepsilon$ has positive derivative, lies between $\tanh v$
and $v$, and satisfies
$vu_\varepsilon'-u_\varepsilon=(1-\varepsilon)(vu'-u)\le0$.
Also $A_\varepsilon\le A\le\sech^2v$. The function
\[
 \ell_A(v)=\partial_\varepsilon\log A_\varepsilon(v)
        =-v^2-\log A(v)
\]
is independent of $\varepsilon$ and satisfies
$-v^2\le\ell_A\le0$ and $\ell_A'=-2(v-u)\le0$.
The latter inequality is strict for sufficiently large $v$ by the
finite limit in \eqref{eq:comparison-profile-estimates}. Moreover,
\[
 |A_\varepsilon\ell_A|\le v^2\sech^2v,\qquad
 |\partial_\varepsilon {I_\varepsilon}(v)|
 \le\int_0^\infty y^2\sech^2y\dd y<\infty.
\]
These bounds justify differentiation of the functional on the entire
closed parameter interval, with one-sided derivatives at its endpoints.

Define
\[
 \mathcal K_\varepsilon(v)=A_\varepsilon(v)\mathfrak w(v)
       -d\int_v^\infty I_\varepsilon(y)\mathfrak w(y)\dd y.
\]
Fubini's theorem gives
\[
 \frac{\dd}{\dd\varepsilon}\mathcal E_\alpha[A_\varepsilon]
 =2\int_0^\infty A_\varepsilon\mathcal K_\varepsilon\ell_A\dd v.
\]
We determine the sign pattern of $\mathcal K_\varepsilon$ at a zero of
the functional. Fix $\varepsilon\in[0,1]$ and suppress its subscript. Put
$\mathcal S={I}/A$. Then
\[
 \mathcal S'=1+2u\mathcal S,\qquad \mathcal S(0)=0,\qquad
 \mathcal S''=2u'\mathcal S+2u\mathcal S'>0\quad(v>0).
\]
Convexity gives $\mathcal S(t)\le t\mathcal S(v)/v$ for $0\le t\le v$.
Since $u$ is strictly increasing, for $v>0$,
\begin{equation}\label{eq:comparison-convexity}
 \mathcal S(v)-v=2\int_0^v u(t)\mathcal S(t)\dd t
 <vu(v)\mathcal S(v),\qquad
 u'(v)(\mathcal S(v)-v)<u(v)^2\mathcal S(v).
\end{equation}
The second inequality uses $vu'\le u$.
To determine where $\mathcal K'$ changes sign, write
$b_\alpha(v)=\alpha\tanh(\alpha v)$ and define
\[
 {H_A}(v)=\frac{2u(v)-b_\alpha(v)}{\mathcal S(v)-v}.
\]
Fubini's theorem and differentiation give
\begin{equation}\label{eq:functional-kernel}
 \mathcal E_\alpha[A]=\int_0^\infty A{\mathcal K}\dd v,\qquad
 {\mathcal K}'=A{\mathfrak w}(\mathcal S-v)(d-{H_A}).
\end{equation}
Since $A\mathfrak w(\mathcal S-v)>0$ for $v>0$, it remains to show
that $H_A$ decreases from $\infty$ to zero.
Since $b_\alpha\le\tanh v\le u$ and $b_\alpha'\ge0$, differentiation
and \eqref{eq:comparison-convexity} give
\begin{align*}
 (\mathcal S-v)^2{H_A}'
 &=(2u'-b_\alpha')(\mathcal S-v)-2u\mathcal S(2u-b_\alpha)\\
 &<-2u\mathcal S(u-b_\alpha)-b_\alpha'(\mathcal S-v)\le0.
\end{align*}
Near zero, $\mathcal S-v\sim2v^3/3$ and $u(v)\sim v$, so ${H_A}\to\infty$.
At infinity, $u\le v$, $A\le\sech^2v$, and ${I}$ is bounded away from
zero, so ${H_A}\to0$.

There is therefore exactly one point at which $H_A=d$.
Thus ${\mathcal K}$ first strictly decreases and then strictly increases to zero;
its values on the final increasing interval are negative. If the
functional is zero, ${\mathcal K}(0)>0$ and ${\mathcal K}$ has exactly one zero $v_0$, with positive
sign before $v_0$ and negative sign after it.

At a zero of the functional, subtract $\ell_A(v_0)$ in the integrand. By
\eqref{eq:functional-kernel} the integral is unchanged. The resulting
integrand is nonnegative and strictly positive for all sufficiently large $v$.
Every zero of the functional therefore has strictly positive
$\varepsilon$-derivative, including a zero at $\varepsilon=0$ with its
right derivative. A
nonnegative initial value would become positive and could not reach
the negative value at $\varepsilon=1$. This proves the lemma.
\end{proof}

\subsection{An explicit Gaussian estimate}

We now find an explicit Gaussian tilt at which the comparison functional
is strictly negative.

\begin{lemma}\label{lem:gaussian-estimate}
For $d>0$, put
\begin{equation}\label{eq:mc-gaussian-threshold}
 \kappa=\frac{169}{625},\qquad
 \theta_*(d)=\min\left\{\kappa,\frac{13}{25}\sqrt d\right\},\qquad
 \alpha_*(d)=\sqrt{\theta_*(d)}.
\end{equation}
Then $0<\alpha_*(d)\le13/25<1$, and
$\mathcal E_{\alpha_*(d)}[A_G]<0$ for $A_G(v)=e^{-v^2}$.
\end{lemma}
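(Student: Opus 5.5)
The plan is to evaluate $\mathcal E_\alpha[A_G]$ as explicitly as possible and reduce the claim to an elementary inequality in the two scalars $d>0$ and $\theta=\alpha^2=\theta_*(d)$. The bounds $0<\alpha_*(d)\le 13/25<1$ are immediate from $\kappa=(13/25)^2$. For the sign, the first term is an exact Gaussian integral:
\[
 \int_0^\infty e^{-2v^2}e^{-dv^2/2}\cosh(\alpha v)\dd v
 =\frac12\sqrt{\frac{2\pi}{4+d}}\;e^{\theta/(2(4+d))}.
\]
For the second term, $I_G(v)=\int_0^v e^{-y^2}\dd y=\frac{\sqrt\pi}{2}\operatorname{erf}(v)$. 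I would bound $I_G^2$ from below by a finite combination of Gaussians, so that every resulting integral is again of the closed form $\frac12\sqrt{\pi/c}\,e^{\theta/(4c)}$. The first candidate is Pólya's bound $\operatorname{erf}(v)^2\ge 1-e^{-4v^2/\pi}$. Since this bound is too weak near $v=0$, I would supplement it with a Taylor-type bound such as $I_G(v)^2\ge v^2-\tfrac23v^4$ (truncated where it becomes negative), or with $I_G(v)\ge v e^{-v^2/3}$, whichever is tight in the relevant regime. Each choice turns $\mathcal E_\alpha[A_G]$ into an explicit upper bound $\Phi(d,\theta)$, a short sum of terms $c_i^{-1/2}e^{\theta/(4c_i)}$ with $c_i$ affine in $d$.

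The key structural observation is a near cancellation as $d\to\infty$. Expanding in moments of $e^{-dv^2/2}$ (with $\int v^{2k}e^{-dv^2/2}\dd v\propto d^{-k-1/2}$), the leading terms of the two pieces agree to order $d^{-1/2}$, and also to the first correction $-2/d$. What survives is
\[
 \mathcal E_\alpha[A_G]=\sqrt{\tfrac{\pi}{2d}}\Bigl(-\frac{\theta}{d}+O(d^{-2})\Bigr),
\]
where the $-\theta/d$ comes from the $\cosh$ tilt interacting with the $v^4$ term of $I_G^2$. Hence negativity at large $d$ is driven entirely by the tilt, and the constant value $\theta=\kappa$ keeps it bounded away from zero relative to the $O(d^{-2})$ errors. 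I would therefore split into $d\ge d_0$ and $d\le d_0$ for a concrete $d_0$ (where $\kappa=\frac{13}{25}\sqrt{d}$, i.e. $d_0=\kappa$, is natural). For $d\ge d_0$, I would use the Taylor lower bound on $I_G^2$ with an explicit remainder, and bound the error terms by elementary inequalities like $e^{x}\le 1+x+x^2e^x/2$ for the $\theta$-exponentials.

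For $d\le d_0$, where $\theta=\frac{13}{25}\sqrt d$ and the weight $e^{-dv^2/2}$ is broad, the mass of $d\,I_G^2\mathfrak w$ sits at large $v$, where $I_G^2\approx\pi/4$. Here the Pólya bound should dominate. The leading negative contribution is about $\frac{\pi}{8}\sqrt{2\pi d}\,e^{\theta/(2d)}$, against a bounded positive term of order $\frac12\sqrt{2\pi/4}$, and $e^{\theta/(2d)}=e^{(13/50)d^{-1/2}}$ grows quickly as $d\downarrow0$. The inequality is therefore comfortable for small $d$ and tightest near $d_0$. This is where the specific constant $13/25$ must be checked, possibly with a rigorous interval evaluation on a finite grid in $d$ combined with monotonicity of each term in $d$.

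I expect the main obstacle to be the large-$d$ regime, together with the transition near $d_0$. The cancellation of the two leading orders means crude bounds on $I_G$ lose the sign, so the lower bound for $I_G^2$ must be exact through order $v^4$ and the remainders must be made explicit. I would first try the quartic truncation with Gaussian-moment bookkeeping, verifying that the resulting rational-in-$d^{-1}$ inequality holds for all $d\ge d_0$. If it fails near $d_0$, I would patch that window numerically with certified bounds.
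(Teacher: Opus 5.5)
There is a genuine gap, and it sits where the lemma is tight. What you have is correct as far as it goes: the first integral, the bound $\alpha_*\le13/25$, and the large-$d$ expansion $\sqrt{\pi/(2d)}\,(-\theta/d+O(d^{-2}))$ are all right.

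First, Pólya's inequality goes the other way: $\operatorname{erf}(v)^2\le1-e^{-4v^2/\pi}$. The valid one-Gaussian lower bound is $\operatorname{erf}(v)^2\ge1-e^{-v^2}$, because the quarter disc of radius $v$ lies in $[0,v]^2$. With that replacement your regime $d\le\kappa$ appears to survive numerically, though only with about one percent of slack near $d\approx0.1$.

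The serious problem is $d\ge\kappa$, where $\theta=\kappa$ is fixed. There the constant $13/25$ is almost sharp at moderate $d$, not at large $d$. At $d=0.7$, my numerical evaluation gives $\mathcal E_{\alpha_*}[A_G]\approx-8\times10^{-4}$, while $\int_0^\infty A_G^2\mathfrak w\approx0.595$. So $d\int_0^\infty I_G^2\mathfrak w$ must be bounded below with relative accuracy about $10^{-3}$, against a weight of variance about $1.4$. Your bounds cannot do this:
\begin{itemize}
\item A bound exact only through $v^4$ at the origin fails. For instance $I_G\ge ve^{-v^2/3}$ gives only $\mathcal E\le0.23$ at $d=0.7$. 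The truncated quartic, which vanishes for $v>\sqrt{3/2}$, is worse.
\item The corrected $\tfrac\pi4(1-e^{-v^2})$ gives only $\mathcal E\le0.04$ there.
\item The expansion in $1/d$ says nothing at this scale.
\end{itemize}
So the ``numerical patch'' is the whole proof, not a patch. It would need a certified pointwise lower bound for $\operatorname{erf}^2$, with error of order $10^{-3}$ over the bulk of a broad weight, and you have not said how to obtain or prove one.

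The idea you are missing is to drop pointwise lower bounds on $I_G^2$ and evaluate the delicate part exactly. The paper writes the sign condition as $Q_G(\eta)-\upsilon S_G(\eta)>0$, where $Q_G(\eta)=\E I_G(Y_G)^2$, $S_G(\eta)=\E e^{-2Y_G^2}$, $Y_G\sim\mathcal N(\eta,\upsilon)$, $\upsilon=1/d$ and $\eta=\alpha/d$. Then:
\begin{itemize}
\item $S_G$ is an explicit Gaussian integral.
\item At zero mean, Gaussian integration by parts and the heat equation in $\upsilon$ give the exact value $Q_G(0)=\frac12\arcsin\bigl(2\upsilon/(1+2\upsilon)\bigr)$, which carries the near-cancellation.
\item The derivative in the mean is exactly $\sqrt{\pi/L_G}\,e^{-\eta^2/L_G}\operatorname{erf}\bigl(\eta/\sqrt{L_GM_G}\bigr)$, with $L_G=1+2\upsilon$ and $M_G=1+4\upsilon$.
\end{itemize}
The only estimate used is $\operatorname{erf}(x)>(2x/\sqrt\pi)e^{-x^2}$. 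It is applied only for $x\le\alpha/\sqrt{(d+2)(d+4)}<0.19$, so it costs a small fraction of the tilt increment. At $d=0.7$ this is about $7\times10^{-4}$ in $2[Q_G-\upsilon S_G]$, leaving a positive $5\times10^{-4}$. The result is a closed one-variable inequality in $z=2/(d+2)$. The paper certifies it with rational bounds for $\arctan$ and $e^y$ plus Bernstein-coefficient checks, splitting at $d=\kappa$ as you proposed.
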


\begin{proof}
Put $I_G(v)=\int_0^v A_G(y)\dd y$. The average of
$I_G^2-d^{-1}A_G^2$ under $\mathcal N(\alpha/d,1/d)$ has the
opposite sign from $\mathcal E_\alpha[A_G]$. We estimate this
average by first leaving the Gaussian parameters free.
For $\upsilon>0$ and $\eta\ge0$, let
$Y_G=\eta+\sqrt\upsilon Z_G$, where $Z_G$ is standard normal,
and put $L_G=1+2\upsilon$, $M_G=1+4\upsilon$. Define
\[
 {Q_G}(\eta)=\E {I_G}({Y_G})^2,\qquad {S_G}(\eta)=\E e^{-2{Y_G}^2}.
\]
Both functions also depend on $\upsilon$, which is suppressed in the notation.
A prime denotes differentiation in the mean $\eta$ at fixed variance;
$\partial_\upsilon Q_G(0)$ below differentiates the variance with the mean
held at zero. We use the normalization
\[
 \operatorname{erf}(x)=\frac2{\sqrt\pi}\int_0^x e^{-s^2}\dd s.
\]
Differentiating in the mean under the Gaussian expectation gives
\[
 Q_G'(\eta)=2\E\bigl[I_G(Y_G)e^{-Y_G^2}\bigr].
\]
Completing the square in the weighted Gaussian density yields
\[
 \E\bigl[I_G(Y_G)e^{-Y_G^2}\bigr]
 =L_G^{-1/2}e^{-\eta^2/L_G}
   \E I_G\left(\frac\eta{L_G}+\sqrt{\frac\upsilon{L_G}}Z_G\right).
\]
For $a\in\R$ and $b\ge0$, Gaussian averaging of $I_G$ gives
$\E I_G(a+bZ_G)=(\sqrt\pi/2)\operatorname{erf}
(a/\sqrt{1+2b^2})$; this follows by differentiating in $a$, completing
the square, and using that both sides vanish at $a=0$. Since
$1+2\upsilon/L_G=M_G/L_G$, we obtain
\[
 {S_G}(\eta)={M_G}^{-1/2}e^{-2\eta^2/{M_G}},\qquad
 {Q_G}'(\eta)=\sqrt{\pi/{L_G}}e^{-\eta^2/{L_G}}
       \operatorname{erf}(\eta/\sqrt{{L_G}{M_G}}).
\]
At mean zero, let ${\mathfrak I}=\E[{Y_G}{I_G}({Y_G})e^{-{Y_G}^2}]$. Gaussian integration by
parts and the heat equation give ${\mathfrak I}=\upsilon({S_G}(0)-2{\mathfrak I})$ and
$\partial_\upsilon {Q_G}(0)={S_G}(0)-2{\mathfrak I}$, respectively. Hence
$\partial_\upsilon {Q_G}(0)=1/({L_G}\sqrt {M_G})$, and integration from zero gives
${Q_G}(0)=(1/2)\arcsin(2\upsilon/{L_G})$.
Since $\operatorname{erf}(x)>(2x/\sqrt\pi)e^{-x^2}$ for $x>0$,
integration of the displayed formula for ${Q_G}'$ gives, strictly for $\eta>0$,
\[
 {Q_G}(\eta)>\frac12\arcsin(2\upsilon/{L_G})
       +\frac{\sqrt {M_G}}{2{L_G}}(1-e^{-2\eta^2/{M_G}}).
\]
Substitute $\upsilon=1/d$, $\eta=\alpha\upsilon$, and $z=2\upsilon/{L_G}=2/(d+2)$.
It follows that
\begin{equation}\label{eq:mc-Gaussian-deficit}
 2[{Q_G}(\eta)-\upsilon {S_G}(\eta)]>
 \arcsin z+\sqrt{1-z^2}
 -\frac{1+z-z^2}{\sqrt{1-z^2}}
  \exp\left(-\frac{\alpha^2z^2}{2(1-z^2)}\right).
\end{equation}
The left side has the opposite sign from $\mathcal E_\alpha[A_G]$.
It is therefore enough to show, at $\alpha=\alpha_*(d)$, that
\begin{equation}\label{eq:mc-exponential-target}
 \exp\left(\frac{\alpha^2z^2}{2(1-z^2)}\right)
 >\mathcal R(z)=
 \frac{1+z-z^2}{1-z^2+\sqrt{1-z^2}\arcsin z}.
\end{equation}
To estimate $\mathcal R(z)$ algebraically, introduce
\[
 r=\frac{z}{1+\sqrt{1-z^2}},\qquad q=\frac{1-r}{1+r},\qquad
 d=\frac{(1-r)^2}{r}=\frac{4q^2}{1-q^2}.
\]
The first substitution gives
\[
 z=\frac{2r}{1+r^2},\qquad
 \sqrt{1-z^2}=\frac{1-r^2}{1+r^2},\qquad \arcsin z=2\arctan r.
\]
Thus the square root becomes rational and the inverse sine becomes an
arctangent. The coordinate $r$ tends to zero as $d\to\infty$, whereas
$q$ tends to zero as $d\downarrow0$. We use a different polynomial estimate
in each regime. The two inequalities are proved in
\zcref{lem:arithmetic-certificate} below.

Suppose first that $d\ge\kappa$. Then $0<r<3/5$, since
$\kappa>4/15$, and $\alpha_*^2=\kappa$. Set
\[
 {N_U}(r)=r(105+55r^2),\qquad {H_U}(r)=105+90r^2+9r^4.
\]
The identity
\[
 \frac{\dd}{\dd r}\left(\arctan r-\frac{{N_U}(r)}{{H_U}(r)}\right)
 =\frac{576r^8}{(1+r^2){H_U}(r)^2}\ge0
\]
gives $\arctan r\ge {N_U}(r)/{H_U}(r)$. Put
\[
 A_U=(1-r^2)^2{H_U}+2(1-r^4){N_U},\qquad
 B_U=[(1-r^2)^2+2r(1+r^2)]{H_U},
\]
\[
 {D_U}=(1-r^2)^2,\qquad {Z_U}=\frac{338}{625}r^2,
\]
so that $y=Z_U/D_U=\alpha_*^2z^2/[2(1-z^2)]$ is the exponent in
\eqref{eq:mc-exponential-target}. The arctangent bound gives
$\mathcal R(z)\le B_U/A_U$. We compare this upper bound with the
rational lower bound $(6+2y)/(6-4y+y^2)$ for $e^y$, proved below.
Clearing the positive denominators produces a numerator divisible by
$r^2$. Define the resulting degree-$13$ polynomial by
\begin{equation}\label{eq:mc-uniform-poly}
 p_U(r)=\frac{A_U(6{D_U}^2+2{Z_U}{D_U})-B_U(6{D_U}^2-4{Z_U}{D_U}+{Z_U}^2)}{r^2}.
\end{equation}
The quotient is continued polynomially at zero. For $y\ge0$,
\begin{equation}\label{eq:exponential-rational-bound}
 e^y\ge\frac{6+2y}{6-4y+y^2}.
\end{equation}
Indeed, the denominator is $(y-2)^2+2>0$, and
\[
 \frac{\dd}{\dd y}\left[y-\log\frac{6+2y}{6-4y+y^2}\right]
 =\frac{y^3}{(y+3)(y^2-4y+6)}\ge0.
\]
\zcref{lem:arithmetic-certificate} gives $p_U>0$ on $[0,3/5]$.
Thus
\[
 e^{{Z_U}/{D_U}}\ge\frac{6{D_U}^2+2{Z_U}{D_U}}{6{D_U}^2-4{Z_U}{D_U}+{Z_U}^2}>\frac{B_U}{A_U}
 \ge\frac{(1-r^2)^2+2r(1+r^2)}
 {(1-r^2)^2+2(1-r^4)\arctan r}
 =\mathcal R(z).
\]
This proves \eqref{eq:mc-exponential-target} in the first case.

Suppose now that $d\le\kappa$. Then $0<q<13/50$, since
$\kappa<676/2331$, and $\alpha_*^2=(13/25)\sqrt d$.
Evaluating the same rational lower bound at $r=1$ gives
$\arctan 1\ge40/51$. 
As $\arctan q\le q$ and $\arctan r=\arctan1-\arctan q$, the identity
\[
 \mathcal R(z)=
 \frac{1+4q^2-q^4}
 {4q[q+(1+q^2)(\arctan1-\arctan q)]}
\]
implies $\mathcal R(z)\le B_C(q)/(4qA_C(q))$, where
\[
 A_C(q)=\frac{40}{51}(1+q^2)-q^3>0,\qquad
 B_C(q)=1+4q^2-q^4.
\]
On $0\le q\le13/50$,
\[
 \sqrt{1-q^2}\ge1-\frac{51}{100}q^2.
\]
Indeed the right side is positive and its square is at most $1-q^2$,
since $(51/100)^2(13/50)^2<1/50$. Put
\[
 K(q)=\frac{13}{100}(1-q^2)\left(1-\frac{51}{100}q^2\right)
\]
so that, for $q>0$,
\[
 \frac{K(q)}q\le\frac{13(1-q^2)^{3/2}}{100q}
 =\frac{\alpha_*^2z^2}{2(1-z^2)}.
\]
We compare the cubic Taylor lower bound for $e^{K(q)/q}$ with
$B_C(q)/(4qA_C(q))$. Multiplication of their difference by the positive
factor $4q^3A_C(q)$ gives the degree-$15$ polynomial
\begin{equation}\label{eq:gaussian-d-polynomial}
 p_C(q)=4A_C(q)
 \left(q^3+K(q)q^2+\frac{K(q)^2q}{2}+\frac{K(q)^3}{6}\right)
 -B_C(q)q^2.
\end{equation}
\zcref{lem:arithmetic-certificate} gives $p_C>0$ on $[0,13/50]$.
The bound $e^y\ge1+y+y^2/2+y^3/6$ for $y\ge0$ and the square-root
estimate give
\[
 \exp\left(\frac{13(1-q^2)^{3/2}}{100q}\right)
 \ge e^{K(q)/q}>\frac{B_C(q)}{4qA_C(q)}\ge\mathcal R(z).
\]
This proves \eqref{eq:mc-exponential-target} in the second case
and completes the proof.
\end{proof}

We transfer this Gaussian sign to the original diffusion and obtain the
parameter bounds used in the moment argument.

\begin{proposition}\label{prop:parameter-bounds}
Let $m:[0,L]\to[0,1]$ be nondecreasing, let $0<t\le L$, and let $h>0$.
Let $U$ and $Y$ solve \eqref{eq:parisi-time}--\eqref{eq:optimal-diffusion},
and use the normalization in \eqref{eq:profile-normalization}. Define
\[
 N=Q(t)-tS(t),\qquad \alpha=\frac h{\lambda t},\qquad
 d=\frac1{\lambda^2t}.
\]
Then
\begin{equation}\label{eq:parameters-under-N}
 N\le0\quad\Longrightarrow\quad
 \alpha^2<\frac{169}{625},\qquad
 \alpha<\frac{13}{25},\qquad
 d>\frac{625}{169}\alpha^4.
\end{equation}
\end{proposition}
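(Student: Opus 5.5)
We follow the outline of Step~2 in the introduction: a lower bound for $N$ by a Gaussian expectation, a sign relation between that expectation and $\mathcal E_\alpha[A]$, and then \zcref{lem:gaussian-estimate,lem:auxiliary-functional}.

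\emph{Reflected Brownian comparison.} Put $\Phi(x)=B(t,x)^2-tC(t,x)^2$. This is even, and for $x\ge0$ we have $\Phi'=2C(B-tC_x)\ge0$, since $B\ge0$ and $C_x\le0$ there by \zcref{lem:pde-regularity}. The process $|Y_r|$ has drift $m(r)|B(r,Y_r)|\ge0$ away from zero, plus reflection. Couple it with $|h+W_r|$ through the same Brownian motion (Tanaka's formula, or a comparison theorem for reflected SDEs). This gives $|Y_t|\ge|h+W_t|$ almost surely. Hence
\[
 N=\E\Phi(Y_t)\ge\E\Phi(h+\sqrt tZ).
\]
Since the spatial functions are frozen at time $t$, only the law changes.

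\emph{Rescaling to $\mathcal E_\alpha$.} Set $v=\lambda x$. Then $C=c_0A$, $B=(c_0/\lambda)I$, and $\lambda(h+\sqrt tZ)\sim\mathcal N(\alpha/d,1/d)$, whose absolute value has density proportional to $\mathfrak w(v)=e^{-dv^2/2}\cosh(\alpha v)$ on $v\ge0$. This gives
\[
 \E\Phi(h+\sqrt tZ)=\frac{c_0^2}{\lambda^2}\,\E\bigl[I^2-d^{-1}A^2\bigr],
\]
which is a positive multiple of $-\mathcal E_\alpha[A]$. Therefore $N\le0$ forces $\mathcal E_\alpha[A]\ge0$.

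\emph{Ruling out large $\alpha$.} Suppose $\alpha\ge\alpha_*(d)$. \zcref{lem:gaussian-estimate} gives $\mathcal E_{\alpha_*}[A_G]<0$, and $\alpha_*\le13/25\le1$, so \zcref{lem:auxiliary-functional} yields $\mathcal E_{\alpha_*}[A]<0$. Equivalently, the Gaussian expectation $G(\eta)=\E\Phi(\eta+\sqrt tZ)$ is strictly positive at $\eta=\lambda t\alpha_*$.

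We then need $G$ nondecreasing in $\eta\ge0$. Write $G(\eta)=\E\Phi(|\eta+\sqrt tZ|)$. The law of $|\eta+\sqrt tZ|$ is stochastically increasing in $\eta\ge0$, because folded normals are ordered (coupling via reflection), and $\Phi$ is nondecreasing on $[0,\infty)$. Hence $G(h)\ge G(\lambda t\alpha_*)>0$, so $N>0$, a contradiction.

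Thus $N\le0$ implies $\alpha<\alpha_*(d)$, that is, $\alpha^2<\min\{\kappa,\tfrac{13}{25}\sqrt d\}$. This gives the three claims: $\alpha^2<169/625$; $\alpha<13/25$; and $\alpha^4<(169/625)d$, i.e.\ $d>\tfrac{625}{169}\alpha^4$.

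\emph{Main obstacle.} The delicate point is the rigorous coupling $|Y_t|\ge|h+W_t|$ with a merely measurable, possibly discontinuous drift coefficient $m$. I would handle it in two steps. First prove it for step-function $m$, where the drift $mB$ is Lipschitz in space and a reflected-SDE comparison theorem applies. Then pass to the limit using the uniform path convergence established in the proof of \zcref{lem:ito-moments}. The quantity $N$ converges along this approximation because $Q$ and $S$ are continuous functionals of the diffusion. Note that $\lambda$, $\alpha$ and $d$ depend only on $U(t,\cdot)$ and $h$, not on the law of $Y_t$.
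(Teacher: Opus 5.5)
Your proposal is correct and follows the paper's proof essentially step for step. The steps are the same: the reflected-Brownian comparison giving $N\ge\E\,\Phi(h+\sqrt t Z)$, the rescaling of that expectation to a positive multiple of $-\mathcal E_\alpha[A]$, and \zcref{lem:gaussian-estimate,lem:auxiliary-functional} applied at $\alpha_*(d)$. The last step is monotonicity in the shift, and there your nonstrict stochastic-ordering argument for folded normals suffices in place of the paper's strict covariance argument.

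One correction concerns the coupling. The comparison process must be the reflected Brownian motion driven by the Tanaka Brownian motion $\widetilde W_r=\int_0^r\operatorname{sgn}(Y_s)\dd W_s$, which only has the law of $|h+W|$. With the original $W$, the pathwise inequality $|Y|\ge|h+W|$ is false in general: at the first zero of $Y$ one has $h+W<0$ once the drift has acted. Also, no step-function approximation is needed, since the drift $mB$ is bounded and Lipschitz in space for every measurable $m$.
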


\begin{proof}
The even function
$\mathfrak n(x)=B(t,x)^2-tC(t,x)^2$ is strictly increasing for $x>0$, since
\[
 \mathfrak n'(x)=2B(t,x)C(t,x)-2tC(t,x)C_x(t,x)>0.
\]
The drift in \eqref{eq:mc-lower-sde} is odd and nonnegative on the
positive half-line. Define, with $\operatorname{sgn}(0)=0$,
\[
 \widetilde W_r=\int_0^r\operatorname{sgn}(Y_s)\dd W_s.
\]
Since $\dd\langle Y\rangle_s=\dd s$, the occupation-time formula gives
$\int_0^r\mathbf1_{\{Y_s=0\}}\dd s=0$. Hence
$\langle\widetilde W\rangle_r=r$, and L\'evy's characterization shows that
$\widetilde W$ is Brownian. With the usual symmetric-local-time convention,
Tanaka's formula gives
\[
 \dd|Y_r|=m(r)B(r,|Y_r|)\dd r+\dd\widetilde W_r+\dd L_r^0(Y).
\]
Thus $|Y|$ is a reflected one-dimensional diffusion with this outward drift.
Couple it with a reflected Brownian motion, started at $h$, using
$\widetilde W$. Write the coupled reflected processes as $R$ and $R^0$,
with regulators $L^R,L^0$. Their difference has finite variation. On
$\{R<R^0\}$ one has $R^0>0$, so $\dd L^0=0$, while both the outward drift
of $R$ and $\dd L^R$ are nonnegative. The chain rule for
$(R^0-R)_+$ therefore shows that this positive part cannot increase from
zero. Hence $R\ge R^0$ pathwise.  The bounded
Lipschitz drift guarantees existence and pathwise uniqueness, so this
comparison applies throughout the interval.  Consequently
\begin{equation}\label{eq:mc-brownian-comparison}
 N\ge\E\mathfrak n(h+\sqrt t Z_G),
\end{equation}
where $Z_G$ is standard normal.

Under $v=\lambda x$, the Brownian variable has variance $1/d$ and mean
$\alpha/d$, and
$B(t,x)=({c_0}/\lambda){I}(\lambda x)$,
$C(t,x)={c_0}A(\lambda x)$. The density of the absolute value of the
rescaled Gaussian is a positive constant times
$e^{-dv^2/2}\cosh(\alpha v)$.  Therefore the Brownian expectation in
\eqref{eq:mc-brownian-comparison} has the opposite sign from
$\mathcal E_\alpha[A]$.

Throughout the following comparison, $t$ and the entire coefficient $m$
are fixed. Thus $U,\lambda,d$ are fixed; only the Gaussian starting point
varies, with $h_i=\lambda t\alpha_i$.
Take $\alpha_* =\alpha_*(d)$ from
\eqref{eq:mc-gaussian-threshold}. By \zcref{lem:gaussian-estimate},
$0<\alpha_*\le13/25<1$ and $\mathcal E_{\alpha_*}[A_G]<0$.
Thus \zcref{lem:auxiliary-functional} applies with this $d$, this
$\alpha_*$, and the fixed normalized profile $A$, giving
$\mathcal E_{\alpha_*}[A]<0$.
Hence the Brownian expectation is strictly positive at the field
corresponding to $\alpha_*$. To compare shifts
$\alpha_2>\alpha_1\ge0$, let $\pi_i$ be the probability measure on
$[0,\infty)$ with density proportional to
$e^{-dv^2/2}\cosh(\alpha_i v)$. Put
\[
 f(v)=\mathfrak n(v/\lambda),\qquad
 r(v)=\frac{\cosh(\alpha_2v)}{\cosh(\alpha_1v)}.
\]
Both $f$ and $r$ are strictly increasing on $(0,\infty)$, and
$\pi_2(\dd v)=r(v)\pi_1(\dd v)/\E_{\pi_1}r$. Therefore
\[
 \E_{\pi_2}f-\E_{\pi_1}f
 =\frac{\Cov_{\pi_1}(f,r)}{\E_{\pi_1}r}>0.
\]
The strict inequality follows because $f$ and $r$ are strictly increasing
and $\pi_1$ has positive density. Thus the Brownian
expectation, and hence $N$, is positive whenever $\alpha\ge\alpha_*$.
Consequently $N\le0$ implies
\[
 \alpha^2<\theta_*(d)
 =\min\left\{\frac{169}{625},\frac{13}{25}\sqrt d\right\},
\]
which gives all the asserted inequalities.
\end{proof}

\section{Moment inequalities for the \texorpdfstring{$C^2$}{C2}-weighted measure}
\label{sec:weighted-moments}

Let $m:[0,L]\to[0,1]$ be nondecreasing, and let $U,Y$ satisfy
\eqref{eq:parisi-time}--\eqref{eq:optimal-diffusion}, with $Y_0=h\ge0$.
Fix $0<t\le L$ and $m_0\in(0,1]$ such that
\[
 m\le m_0\quad\text{almost everywhere on }[0,t),\qquad
 m\ge m_0\quad\text{almost everywhere on }(t,L].
\]
Every term in
$J(t;m_0)$ contains a factor $C^2$ after the normalization of
\zcref{sec:profiles}. Absorbing that factor into the law converts
the condition $F(t)=2m_0G(t)$ into one moment identity and allows the
remaining terms to be compared by integration by parts.

Apply \zcref{lem:normalized-estimates} at reverse time $\tau=L-t$,
using the fixed value $m_0$, and use the normalized functions defined
there. Primes on the normalized spatial functions denote derivatives with
respect to $v$; primes on $\psi$ below denote derivatives with respect
to its argument $r$. Thus $a,j$ are even, $u$ is odd, and, for $v\ge0$,
\begin{equation}\label{eq:moment-profile-inequalities}
\begin{gathered}
 a(0)=\mu,\qquad u(0)=0,\qquad u'(0)=1,\\
 a'=-2au,\qquad u'=a+j,\qquad -2uj\le j'\le uj,\\
 j\ge ka,\qquad 0<u'\le1,\qquad k=\frac{1-\mu}{\mu},
\end{gathered}
\end{equation}
where $0<\mu\le1$. Write $b(v)=m_0B(t,v/\lambda)/\lambda$, so that
$b'=a$ and $b(0)=0$. Set
\[
 w=u^2,\qquad {\mathcal X}=\mu-a,\qquad
 \mathcal T=1-a+2w-j,\qquad p=a-2w.
\]
Here $\mathcal X$ measures the decrease of $a$ from its value at zero, and
$p$ encodes the moment constraint. The function
$\mathcal T=1-u'+2w$ is chosen so that $j+a-2w=1-\mathcal T$.
The probability measure $\rho$ is the law of $\lambda|Y_t|$, reweighted
by $C(t,Y_t)^2$ and normalized. Unless otherwise indicated, all
expectations in this section are under $\rho$.
All the estimates below assume the moment identity
\begin{equation}\label{eq:moment-constraint}
 \E_\rho p=0.
\end{equation}
By \zcref{lem:lower-density}, the density has the form
\begin{equation}\label{eq:moment-density}
 \rho(\dd v)=\frac1{\mathcal Z}\omega(v)e^{-dv^2/2-\psi(v^2)}
                  \cosh(\alpha v)\dd v,\qquad v\ge0,
 \qquad (\log\omega)'=-4u+b.
\end{equation}
Here $d>0$, $\psi'\ge0$, $\psi''\le0$, and
\[
 s(v)=d+2\psi'(v^2),\qquad d\le s(v)\le s(0)<\infty,
 \qquad s'(v)=4v\psi''(v^2)\le0\quad(v\ge0).
\]
We also impose the parameter bounds
\begin{equation}\label{eq:moment-parameter-bounds}
 \kappa=\frac{169}{625},\qquad
 0\le\theta=\alpha^2\le\kappa,
 \qquad d>\frac{\theta^2}{\kappa}.
\end{equation}
For $h>0$, these follow from \zcref{prop:parameter-bounds} when
$N(t)\le0$. For $h=0$, one has $\alpha=0$ and $d>0$, so the bounds hold
automatically. In either case $\alpha<1$.

Throughout the remainder of this section, fix $t$ and $m_0$ as above: $m_0$ satisfies the two
one-sided bounds on $m$, the normalized functions are those of
\zcref{lem:normalized-estimates}, and $\rho$ is the $C^2$-weighted
law of the diffusion determined by this same coefficient $m$.
Together with \eqref{eq:moment-constraint} and
\eqref{eq:moment-parameter-bounds}, these are the standing assumptions
invoked below and in \zcref{sec:positivity}.

At the fixed time $t$, differentiation in $v=\lambda x$ gives
\[
 C_x=-2\lambda Cu,\qquad
 C_{xx}=-2\lambda^2C(j+a-2u^2),
\]
where $C,C_x,C_{xx}$ are evaluated at $(t,v/\lambda)$ and $a,j,u$ at $v$.
Set $M=\E_\rho a>0$. Define the normalized integrand in $J$ by
\[
 {\mathcal J}=2(j+a-2w)^2+12a(a-2w)-9a^2,
\]
and put $O=\E_\rho\mathcal J/M^2$. The definition of $\rho$ gives
\begin{equation}\label{eq:weighted-moment-scaling}
 F(t)-2m_0G(t)=-2\lambda^2S(t)\E_\rho p,\qquad
 J(t;m_0)=2\lambda^4S(t)M^2O.
\end{equation}
Thus the moment identity is equivalent to $F(t)=2m_0G(t)$, and proving
$O>0$ proves $J(t;m_0)>0$. Division by $M^2$ puts the expression on the
same scale as the normalized variances and mixed moments.

\subsection{Consequences of the differential inequalities}

The preceding inequalities give $0<a\le\mu$, $0\le j\le1$, and
$\tanh v\le u\le v$ on $v\ge0$. Moreover, ${\mathcal X},w$ are increasing, and
\begin{equation}\label{eq:basic-pointwise-comparisons}
 w\ge\frac {\mathcal X}\mu=1-\frac a\mu,\qquad a\ge\mu(1-w).
\end{equation}
Since $j\ge ka$, one also has
$w'=2u(a+j)\ge2ua/\mu={\mathcal X}'/\mu$.

Direct differentiation gives
\[
 \mathcal T'=6au+4uj-j'.
\]
Consequently
\begin{equation}\label{eq:profile-increment-comparisons}
 \mathcal T'\ge\frac32({\mathcal X}'+w'),\qquad
 w'\ge\frac{{\mathcal X}'}\mu.
\end{equation}
Notice that $\mathcal T(0)=0$, since $j(0)=1-\mu$.

Define the normalized means, variances, and mixed moments by
\begin{equation}\label{eq:moment-definitions}
\begin{gathered}
 g=\frac\mu M-1,\qquad {\xi}=\frac1M=\frac{1+g}{\mu},
 \qquad y=\frac{\E j}{M},\\
 {V_a}=\frac{\Var(a)}{M^2},\qquad
 \chi=-\frac{\Cov(a,w)}{M^2},\qquad
 {V_T}=\frac{\Var(\mathcal T)}{M^2},\qquad
 {V_w}=\frac{\Var(w)}{M^2},\\
 {R_j}=\frac{\E(jw)}{M^2},\qquad
 {R_a}=\frac{\E(aw)}{M^2}.
\end{gathered}
\end{equation}
Since $j+a-2w=1-\mathcal T$, one has
${\mathcal J}=2(1-\mathcal T)^2+3a^2-24aw$ and $\E(1-\mathcal T)=My$.
Consequently
\begin{equation}\label{eq:normalized-J}
 O=\frac{\E {\mathcal J}}{M^2}=2{V_T}+3{V_a}+24\chi+2y^2-9.
\end{equation}
This identity uses only the moment identity \eqref{eq:moment-constraint}.

The following bounds on means, variances, and covariance will show that
the nonnegative terms in \eqref{eq:normalized-J} outweigh the constant
$-9$.

The equivalent formulas
\[
 g=\frac{\E_\rho\mathcal X}{M},\qquad
 \chi=\frac{\Cov_\rho(\mathcal X,w)}{M^2}
\]
identify $g$ with the mean decrease of $a$, relative to $M$, and $\chi$
with the covariance of two increasing functions. In particular, both are
nonnegative. The notation will be used as follows.
\begin{center}
\begin{tabular}{ll}
\toprule
Quantities & Role \\
\midrule
$M,\xi$ & Mean scale and its reciprocal \\
$g,y$ & Relative mean decrease of $a$ and normalized mean of $j$ \\
$V_a,V_w,V_T$ & Variance terms controlled by comparison and integration by parts \\
$\chi$ & Covariance retained with a positive coefficient in the final bound \\
$R_a,R_j$ & Mixed moments arising from derivative expectations \\
\bottomrule
\end{tabular}
\end{center}

The moment identity gives
\begin{equation}\label{eq:moment-relations}
 \E w=\frac M2,\qquad {R_a}=\frac12-\chi.
\end{equation}
For the variance expansions below, the definition
$\mathcal T=1-a+2w-j$ and $\E j=My$ also give
\begin{equation}\label{eq:covariance-T-w}
 \frac{\Cov_\rho(\mathcal T,w)}{M^2}
 =\chi+2V_w-R_j+\frac y2.
\end{equation}

The variables ${V_a},{V_T},{V_w},{R_j},\chi,{R_a}$ are nonnegative.
For $\chi$, use that $a$ decreases and $w$ increases. More generally,
let $f_1,f_2\in L^2(\rho)$ be real-valued, and let $V_1,V_2$ be
independent with law $\rho$. Then
\[
 \Cov_\rho(f_1,f_2)=\frac12\E\bigl[
 (f_1(V_1)-f_1(V_2))(f_2(V_1)-f_2(V_2))\bigr],
\]
where the expectation on the right is under $\rho\otimes\rho$.
The functions ${\mathcal X},w,w-{\mathcal X}/\mu$ are increasing. Applying the covariance
identity first to ${\mathcal X},w-{\mathcal X}/\mu$ and then to $w,w-{\mathcal X}/\mu$ gives
\begin{equation}\label{eq:variance-comparisons}
 {V_a}\le\mu\chi,\qquad \chi\le\mu {V_w}.
\end{equation}

We also need the mean-dependent comparison
\begin{equation}\label{eq:convex-covariance}
 {V_a}\le2g\chi.
\end{equation}
Since $\mathcal X'=2au>0$ on $v>0$, $w$ can be regarded as a
function of $\mathcal X$. Its derivatives satisfy
\[
 \frac{\dd w}{\dd {\mathcal X}}=1+\frac ja,
 \qquad
 \left(\frac ja\right)'=\frac{j'+2uj}{a}\ge0.
\]
Thus $w({\mathcal X})$ is convex and vanishes at zero, so $w({\mathcal X})/{\mathcal X}$ is
nondecreasing for ${\mathcal X}>0$. If $\E_\rho\mathcal X>0$, define
expectation under the $\mathcal X$-weighted probability measure by
\[
 \E_{\mathcal X}f=\frac{\E_\rho(\mathcal X f)}{\E_\rho\mathcal X}.
\]
The two functions $\mathcal X$ and $w(\mathcal X)/\mathcal X$ are increasing,
so
\[
 \Cov_{\mathcal X}\left(\mathcal X,\frac{w(\mathcal X)}{\mathcal X}\right)\ge0.
\]
Multiplying by $(\E_\rho\mathcal X)^2$ gives
\[
 (\E {\mathcal X})\E({\mathcal X}w)\ge(\E {\mathcal X}^2)(\E w).
\]
Subtracting $(\E {\mathcal X})^2\E w$ from both sides shows
\[
 (\E {\mathcal X})\Cov({\mathcal X},w)\ge(\E w)\Var({\mathcal X}).
\]
Since $\E {\mathcal X}=gM$ and $\E w=M/2$, this is
\eqref{eq:convex-covariance}. If $\E {\mathcal X}=0$, then ${\mathcal X}=0$ almost surely
and the same conclusion is immediate.

The pointwise comparisons and their integrated versions give
\begin{equation}\label{eq:global-mean-bounds}
 \frac{2\mu}{\mu+2}\le M\le\mu,\qquad
 0\le g\le\frac\mu2,\qquad
 y\ge k,\qquad
 y\le\left(1-\frac{3\mu}{2}\right){\xi}+\frac34.
\end{equation}
For the lower bound on $M$, average $a\ge\mu(1-w)$ and use the
moment identity. The bound $y\ge k$ follows from $j\ge ka$. Finally,
integrate the first inequality of
\eqref{eq:profile-increment-comparisons} to obtain
$\mathcal T\ge(3/2)({\mathcal X}+w)$. Its expectation is
\[
 1-My=\E\mathcal T\ge\frac{3\mu}{2}-\frac{3M}{4},
\]
which proves the upper bound on $y$.

\subsection[A quadratic bound for j - ka]{A quadratic bound for $j-ka$}

We need an upper bound for $\E_\rho(j-ka)^2$: subtracting this
moment from its upper bound will supply a nonnegative term in the
final linear combination. Put
$K=j-ka$ and $e=w-1+a/\mu=w-{\mathcal X}/\mu$.
Then $K,e\ge0$, both vanish at zero, and
\[
 e'=2uK,\qquad K'\le u(K+3ka).
\]
Moreover
\[
 K+3ka=j+2ka=u'+(2k-1)a
 \le\max\{1,3(1-\mu)\}.
\]
To verify the last bound, if $2k-1\le0$ use $u'\le1$; otherwise
use $a\le\mu$, which gives $1+(2k-1)\mu=3(1-\mu)$.
It follows that
\[
 (K^2)'\le\max\{1,3(1-\mu)\}e'.
\]
For $11/20\le\mu\le29/40$, the affine function
$49/20-2\mu$ majorizes both $1$ and $3(1-\mu)$.
Integration and then averaging give the explicit bound
\begin{equation}\label{eq:quadratic-bound-j-ka}
 K^2\le\left(\frac{49}{20}-2\mu\right)e,
 \qquad
 \frac{\E K^2}{M^2}\le Q_*=
 \left(\frac{49}{20}-2\mu\right)
 \frac{(\mu-2g)(1+g)}{2\mu^2}.
\end{equation}
Here we used
$\E e/M^2=(\mu-2g)(1+g)/(2\mu^2)$, which follows directly from
$\E w=M/2$ and $M=\mu/(1+g)$.

\subsection[Integration by parts under rho]{Integration by parts under $\rho$}

To bound the variances in $O$ from below, we will express derivative
expectations as covariances with a centered function determined by the
density of $\rho$. Cauchy--Schwarz then relates each variance to the
second moment of that function. We bound this second moment from
above using the term $vs'(v)\le0$ in the second derivative of the
negative log-density.

We first record a bound on the drift contribution $bu$.
The bound $b^2\le\mu(\mu-a)$ in
\eqref{eq:normalized-magnetization-bounds} and the nonnegative square
$(b/\sqrt\mu-\sqrt\mu u)^2$ give
\[
 2bu\le\frac{b^2}{\mu}+\mu u^2\le\mu-a+\mu w.
\]
Taking expectations, using $\mu-M=gM$ and $\E w=M/2$, yields
\begin{equation}\label{eq:bound-bu}
 \frac{\E(bu)}M\le\frac\mu4+\frac g2.
\end{equation}

To control the contribution of the starting point, we replace the
half-line factor $\cosh(\alpha v)$ by $e^{\alpha v}$ on $\R$.
Let $\widehat\E$ denote expectation under the probability measure
with density proportional to
$\omega(v)e^{-dv^2/2-\psi(v^2)+\alpha v}$, with the even extension
of $\omega$. This law has the same expectations of even functions
as $\rho$. More precisely, pairing the two half-lines gives, for
an integrable even or odd function $f$,
\begin{equation}\label{eq:parity-expectations}
 \widehat\E f=
 \begin{cases}
  \E_\rho f(v),&f\text{ even},\\
  \E_\rho[f(v)\tanh(\alpha v)],&f\text{ odd},
 \end{cases}
\end{equation}
where $f$ on the right is restricted to $v\ge0$.
Integrating the logarithmic derivative of the density on $\mathbb R$ gives
\[
 \alpha=\widehat\E[4u-b+s(v)v].
\]
For $v\ge0$,
\[
 4u-b+s(v)v-(d+4-\mu)u
 =\mu u-b+s(v)(v-u)+(s(v)-d)u\ge0.
\]
This difference is odd and nonnegative for $v\ge0$. Since $\alpha\ge0$,
\eqref{eq:parity-expectations} therefore gives
\begin{equation}\label{eq:tilted-mean-bound}
 0\le\widehat\E u\le\frac{\alpha}{d+4-\mu}.
\end{equation}
Define the weighted average
\begin{equation}\label{eq:weighted-average-s}
 \Sigma=\frac{2\E_\rho[s(v)w]}M\in[d,s(0)],
\end{equation}
where the bounds use the moment identity $\E_\rho w=M/2$.

Let $\mathcal V$ be the negative logarithm of the density of $\rho$,
up to an additive constant, and put ${\mathcal Y}=u\mathcal V'-u'$. This
choice satisfies
\[
 \mathcal Y e^{-\mathcal V}=-\bigl(u e^{-\mathcal V}\bigr)',
\]
while
\begin{align*}
 \mathcal V'&=4u-b+s(v)v-\alpha\tanh(\alpha v),\\
 \mathcal V''&=4u'-a+s(v)+vs'(v)-\theta\sech^2(\alpha v).
\end{align*}
For $f\in C^1([0,\infty))$ with $f,f'$ of at most polynomial growth,
integration by parts gives the first two identities below. The same
calculation applies to $f=\mathcal Y$ by the integrability argument that
follows and yields the third identity:
\begin{equation}\label{eq:weighted-ibp}
 \E_\rho {\mathcal Y}=0,\qquad
 \E_\rho(f{\mathcal Y})=\E_\rho(f'u),\qquad
 \E_\rho {\mathcal Y}^2=\E_\rho\bigl[u'^2+w\mathcal V''\bigr].
\end{equation}
For the last identity, apply the second to $f={\mathcal Y}$ and use
$\E_\rho[u u'\mathcal V']=\E_\rho[(u u')']$.
The boundary term at zero vanishes because $u(0)=0$. The bound
$b\le\mu u$ gives $(\log\omega)'=-4u+b\le0$. Also
$e^{-\psi(v^2)}$ is nonincreasing, so the density has Gaussian decay
at infinity. The test functions and ${\mathcal Y}$ have at most polynomial growth,
since $s$ is bounded. The term involving $s'$ is integrable: $-s'$ is
nonnegative with integral at most $s(0)-d$, while a polynomial times
the density is bounded. These facts justify all the integrations by
parts, including the one with $f={\mathcal Y}$.

Define
\begin{equation}\label{eq:quantity-H}
 \mathcal H=\frac{\E_\rho {\mathcal Y}^2}{M^2}
     =\frac{\E_\rho\bigl[u'^2+w\mathcal V''\bigr]}{M^2}\ge0.
\end{equation}
For such $f$ and every real $\eta$, completing a square gives
\begin{equation}\label{eq:variance-inequality}
 \frac{\Var_\rho(f)}{M^2}
 -2\eta\frac{\E_\rho(f'u)}{M^2}+\eta^2\mathcal H
 =\frac{\E_\rho[(f-\E_\rho f-\eta {\mathcal Y})^2]}{M^2}\ge0.
\end{equation}
The monotonicity of $s$ gives
\begin{equation}\label{eq:H-first-upper-bound}
 \mathcal H\le\frac{\E_\rho\bigl[u'^2+w[4u'-a+s(v)
                    -\theta\sech^2(\alpha v)]\bigr]}{M^2},
\end{equation}
because $vs'(v)\le0$. Since $\alpha\le1$ and $u\ge\tanh v$,
$\sech^2(\alpha v)\ge1-w$. Expansion therefore gives
\begin{equation}\label{eq:H-moment-upper-bound}
 \mathcal H\le\overline H_\theta
 ={V_T}+y^2+\frac52-7\chi+(\theta-4){V_w}+8{R_j}
 +\frac{(\Sigma-\theta){\xi}}{2}+\frac\theta4.
\end{equation}
The terms independent of $s$ and $\theta$ are
\[
 \frac{\E_\rho[(a+j)^2+3aw+4jw]}{M^2}
 ={V_T}+y^2+\frac52-7\chi-4{V_w}+8{R_j}.
\]
The term containing $s$ contributes $\Sigma {\xi}/2$, and the term
containing $\theta$ contributes at most $-\theta {\xi}/2+\theta {V_w}+\theta/4$.
For any fixed $t_*\ge\theta$ we may use the larger upper bound
\begin{equation}\label{eq:H-enlarged-bound}
 H_{t_*}={V_T}+y^2+\frac52-7\chi+(t_*-4){V_w}+8{R_j}
 +\frac{(\Sigma-\theta){\xi}}{2}+\frac\theta4.
\end{equation}
Only the coefficient of ${V_w}$ is increased. The terms involving
$\Sigma$ and $\theta$ are otherwise unchanged.

\subsection[A lower bound for y and five moment inequalities]{A lower bound for $y$ and five moment inequalities}

We first isolate the mean estimate used in all three ranges of $\mu$
and in the positive- and zero-field bounds for the upper atom.
Set $\gamma=4-\mu$ and $a_0=1-\mu/4-g/2$.

\begin{lemma}\label{lem:general-mean-bound}
Under the standing assumptions of \zcref{sec:weighted-moments}, for
every $0<\mu\le1$ one has
\begin{equation}\label{eq:lower-bound-y}
 y\ge a_0+\frac\Sigma2-\frac{\theta {\xi}}{d+{\gamma}},
 \qquad \Sigma\ge d>\frac{\theta^2}{\kappa}.
\end{equation}
\end{lemma}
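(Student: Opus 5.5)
The plan is to apply the first integration-by-parts identity in \eqref{eq:weighted-ibp}, namely $\E_\rho\mathcal Y=0$, and then bound each resulting term with estimates already proved in this section. Since $\mathcal Y=u\mathcal V'-u'$, this identity says
\[
 \E_\rho u'=\E_\rho[u\mathcal V']
 =\E_\rho\bigl[4w-bu+s(v)vu-\alpha u\tanh(\alpha v)\bigr].
\]
For the left side, $u'=a+j$ gives $\E_\rho u'=M+My$. On the right, the moment identity \eqref{eq:moment-relations} gives $4\E_\rho w=2M$. Dividing by $M$ turns the identity into
\[
 y=1-\frac{\E_\rho(bu)}M+\frac{\E_\rho[s(v)vu]}M
   -\frac{\alpha\E_\rho[u\tanh(\alpha v)]}M .
\]

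Next I would bound the three remaining terms separately.
\begin{itemize}
\item For the drift term, \eqref{eq:bound-bu} gives $\E_\rho(bu)/M\le\mu/4+g/2$. Together with the constant $1$, this produces $a_0=1-\mu/4-g/2$.
\item For the $s$ term, use $u\le v$, $u\ge0$ and $s\ge d>0$ on $v\ge0$. These give $s(v)vu\ge s(v)u^2=s(v)w$, so by the definition \eqref{eq:weighted-average-s} of $\Sigma$ this term is at least $\Sigma/2$.
\item For the tilt term, $u$ is odd, so the parity relation \eqref{eq:parity-expectations} gives $\E_\rho[u\tanh(\alpha v)]=\widehat\E u$. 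Then \eqref{eq:tilted-mean-bound} bounds this by $\alpha/(d+4-\mu)=\alpha/(d+\gamma)$. Multiplying by $\alpha/M=\alpha\xi$ shows the term is at most $\theta\xi/(d+\gamma)$.
\end{itemize}
Combining these three bounds gives the first inequality of \eqref{eq:lower-bound-y}.

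For the second inequality, $\Sigma\ge d$ is recorded in \eqref{eq:weighted-average-s}. It follows from $s\ge d$ and $\E_\rho w=M/2$. The bound $d>\theta^2/\kappa$ is the standing assumption \eqref{eq:moment-parameter-bounds}.

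The only point needing care is justifying the integration by parts, in particular that the boundary term at $v=0$ vanishes and that the tilted measure is integrable. Both were already handled in the discussion following \eqref{eq:weighted-ibp}, since $u(0)=0$ and the density has Gaussian decay. The argument otherwise uses only the sign facts $v\ge u\ge0$ and $s\ge0$, so I expect no real obstacle. The step deserving the most attention is the sign bookkeeping for the term $-\alpha\tanh(\alpha v)$ in $\mathcal V'$ and its conversion to $\widehat\E u$, which requires $\alpha\ge0$.
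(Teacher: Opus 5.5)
Your proof is correct and is essentially the paper's argument. The paper integrates $u'$ by parts against the full-line density defining $\widehat\E$ and then uses parity to move the even terms back to $\rho$. You instead apply the half-line identity $\E_\rho\mathcal Y=0$ and use parity to rewrite $\E_\rho[u\tanh(\alpha v)]$ as $\widehat\E u$, which is the same computation; the three bounds you then apply (\eqref{eq:bound-bu}, $vu\ge w$, and \eqref{eq:tilted-mean-bound} with $\alpha\ge0$) are exactly the paper's.
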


\begin{proof}
The logarithmic derivative of the full-line density is
$-4u+b-s(v)v+\alpha$. Integrating the derivative of $u$ against
that density therefore gives
\[
 \widehat\E u'=\widehat\E\bigl[u(4u-b+s(v)v-\alpha)\bigr].
\]
Use $u'=a+j$, $\E_\rho w=M/2$, and the parity identity
\eqref{eq:parity-expectations}. The even terms have the same
expectations under the two laws, so
\[
 M(1+y)=2M-\E_\rho(bu)+\E_\rho[s(v)vu]-\alpha\widehat\E u.
\]
Use $vu\ge w$, \eqref{eq:tilted-mean-bound}, and
\eqref{eq:bound-bu}, and divide by $M$, to obtain the first inequality
in \eqref{eq:lower-bound-y}. The bounds on $\Sigma$ and $d$ follow from
\eqref{eq:weighted-average-s} and \eqref{eq:moment-parameter-bounds}.
\end{proof}

For the following five quantities only, assume
$11/20\le\mu\le29/40$, so that \eqref{eq:quadratic-bound-j-ka} applies.
The first combines that bound with a variance:
\begin{equation}\label{eq:nonnegative-square-combination}
 R_\square=\frac{\Var(5j+a-w)}{M^2}
   +20\mu\left(Q_*-\frac{\E(j-ka)^2}{M^2}\right).
\end{equation}
Here $Q_*$ is defined in \eqref{eq:quadratic-bound-j-ka}. Expansion using the moment
identity gives
\begin{equation}\label{eq:combined-square-expansion}
\begin{aligned}
 R_\square={}&(25-20\mu){V_T}+\left(16-\frac{20}{\mu}\right){V_a}
                  +(80\mu-99){V_w}+(80\mu-98)\chi\\
 &+(90-80\mu){R_j}-20\mu y^2-5y+20\mu(Q_*-k^2).
\end{aligned}
\end{equation}
Define the remaining four quantities by
\begin{equation}\label{eq:nonnegative-moments}
\begin{aligned}
 R_C&=-{V_a}+2g\chi,\\
 R_1&=36{V_a}+49{V_w}+84\chi-\frac{26}{5}(26{R_a}+14{R_j})
       +\frac{169}{25}H_\kappa,\\
 R_2&=100{V_T}-279{V_w}-180\chi+180{R_j}-90y
       -7(42{R_a}+12{R_j})+\frac{49}{4}H_\kappa,\\
 R_{1/3}&={V_a}-\frac23\chi+\frac19{V_w}.
\end{aligned}
\end{equation}
These five quantities give the nonnegative terms used in the final linear
combination.

\begin{lemma}\label{lem:five-nonnegative-moments}
Under the standing assumptions of \zcref{sec:weighted-moments}, suppose that
$11/20\le\mu\le29/40$. Then the quantities defined in
\eqref{eq:nonnegative-square-combination} and
\eqref{eq:nonnegative-moments} satisfy
\[
 R_\square\ge0,\qquad R_C\ge0,\qquad R_1\ge0,\qquad
 R_2\ge0,\qquad R_{1/3}\ge0.
\]
\end{lemma}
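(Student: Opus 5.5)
The plan is to prove the five inequalities one at a time, since each has a different origin.

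\emph{The two easy ones.} $R_C\ge0$ is exactly \eqref{eq:convex-covariance}. For $R_\square$, the first summand is a variance, and the second is nonnegative by \eqref{eq:quadratic-bound-j-ka}, valid because $11/20\le\mu\le29/40$. Thus $R_\square\ge0$ directly. The expansion \eqref{eq:combined-square-expansion} is only bookkeeping, which I would check separately. To do so, write $5j+a-w$ in terms of $\mathcal T,a,w$ using $j=1-a+2w-\mathcal T$. Expand $\E(j-ka)^2$ as $\E j^2-2k\E(ja)+k^2\E a^2$, then express everything through the normalized moments using \eqref{eq:moment-relations} and \eqref{eq:covariance-T-w}.

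\emph{$R_{1/3}$.} Rewrite it as a variance:
\[
R_{1/3}M^2=\Var\bigl(a+\tfrac13w\bigr)-\tfrac23\bigl(\Cov(a,w)+\chi M^2\bigr)+\ldots
\]
Concretely, $\Var(a+w/3)=\Var a+\tfrac23\Cov(a,w)+\tfrac19\Var w=M^2(V_a-\tfrac23\chi+\tfrac19V_w)$, because $\Cov(a,w)=-\chi M^2$. So $R_{1/3}=\Var(a+w/3)/M^2\ge0$ with no further work.

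\emph{$R_1$ and $R_2$.} These are the genuine integration-by-parts estimates. I would obtain both from the completed-square inequality \eqref{eq:variance-inequality} combined with $\mathcal H\le H_\kappa$. The latter follows from \eqref{eq:H-enlarged-bound} with $t_*=\kappa\ge\theta$, which is allowed by \eqref{eq:moment-parameter-bounds}.

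For $R_1$, take $\eta=13/5$, so that $\eta^2=169/25$. The test function $f$ should be a combination of $a$ and $w$ with $\Var(f)/M^2=36V_a+49V_w+84\chi$. The choice $f=6a-7w$ works, since $\Var(6a-7w)=36\Var a+49\Var w-84\Cov(a,w)$. Then compute
\[
f'u=(-12au^2-14u^2(a+j))=-26aw-14jw,
\]
so that $-2\eta\E(f'u)/M^2=\tfrac{26}{5}\cdot(26R_a+14R_j)$ in magnitude. The sign in $R_1$ comes out right if we take $\eta=-13/5$, or equivalently $f=7w-6a$. Hence $R_1\ge$ the left side of \eqref{eq:variance-inequality} with $\mathcal H$ replaced by the larger $H_\kappa$, and so $R_1\ge0$.

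For $R_2$, take $\eta=7/2$, so that $\eta^2=49/4$, with $f=10\mathcal T+\beta_1 w+\beta_2 a$. I would solve for the coefficients so that $\Var f/M^2$ reproduces $100V_T-279V_w-180\chi+180R_j-90y$ through \eqref{eq:covariance-T-w}. That relation contributes $20\beta_1(\chi+2V_w-R_j+y/2)$, and matching the $y$ term suggests $\beta_1=-9$. The derivative term $f'u$ involves $\mathcal T'u=(6au+4uj-j')u$. The piece $-j'u$ has no exact expectation, so here I would use the differential inequality $j'\ge-2uj$, or its upper counterpart, to bound $\E(j'u)$ by $R_j$-type terms. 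This is where a one-sided inequality turns the identity into the stated bound $-7(42R_a+12R_j)$.

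I expect this matching for $R_2$ to be the main obstacle: choosing $f$ so that all coefficients agree exactly, and getting the sign of the $j'$ correction to favor nonnegativity. Failing an exact match, I would allow an additional nonnegative variance or a pointwise bound ($j\ge0$, $a\le\mu$) to absorb the discrepancy.
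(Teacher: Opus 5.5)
Your route is the paper's for four of the five quantities. $R_C$ is \eqref{eq:convex-covariance}. $R_\square$ is a variance plus the bound \eqref{eq:quadratic-bound-j-ka}. $R_{1/3}=\Var(a+w/3)/M^2$. $R_1$ comes from \eqref{eq:variance-inequality} with $f=7w-6a$, $\eta=13/5$ and $\mathcal H\le H_\kappa$, obtained by taking $t_*=\kappa\ge\theta$; this matches what you found.

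The one point you leave undecided is $R_2$, and only one of the two options you offer works. Take $f=10\mathcal T-9w$; that is, $\beta_2=0$, since no $V_a$ term appears in $R_2$. Then
\[
 f'=42au+22uj-10j'.
\]
Because $\eta=7/2>0$, you need a \emph{lower} bound on $\E(f'u)$. So you must use the upper inequality $j'\le uj$, which gives $f'\ge u(42a+12j)$ on $v\ge0$. Since $u\ge0$ there, this yields $\E(f'u)/M^2\ge42R_a+12R_j$. This is exactly the paper's step $f'\ge15\mathcal X'+6w'$, taken from the first inequality of \eqref{eq:profile-increment-comparisons}, which is itself $j'\le uj$ in disguise.

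The lower inequality $j'\ge-2uj$ only gives an upper bound $f'\le u(42a+42j)$, which points the wrong way and cannot be used here. With $j'\le uj$ the coefficients match $-7(42R_a+12R_j)$ exactly. So the fallback of absorbing a discrepancy with an extra variance or pointwise bound is unnecessary.
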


\begin{proof}
The sign of $R_\square$ follows from \eqref{eq:quadratic-bound-j-ka} and
nonnegativity of variance. The inequality $R_C\ge0$ is
\eqref{eq:convex-covariance}, and $R_{1/3}=\Var(a+w/3)/M^2$.
The two inequalities obtained by integration by parts use the upper bound
$H_\kappa$ from \eqref{eq:H-enlarged-bound}.

For $R_1$, apply~\eqref{eq:variance-inequality} with
$f=7w-6a$, $\eta=13/5$, and use this upper bound for $\mathcal H$. Its variance is
$M^2(36{V_a}+49{V_w}+84\chi)$ and the corresponding term in
\eqref{eq:variance-inequality} is
\[
 \frac{\E(f'u)}{M^2}=26{R_a}+14{R_j}.
\]
For $R_2$, use $f=10\mathcal T-9w$, $\eta=7/2$. Expanding the variance
with \eqref{eq:covariance-T-w} gives
\[
 \frac{\Var(f)}{M^2}=100{V_T}-279{V_w}-180\chi+180{R_j}-90y.
\]
The first inequality in~\eqref{eq:profile-increment-comparisons} gives
$f'\ge15{\mathcal X}'+6w'=u(42a+12j)$, and hence
$\E(f'u)/M^2\ge42{R_a}+12{R_j}$. Replacing $\E(f'u)$ by this lower
bound preserves the inequality because $\eta>0$.
\end{proof}

\section{\texorpdfstring{Positivity of $J$}{Positivity of J}}\label{sec:positivity}

Until \zcref{sec:strict-inequality-completion}, unmarked expectations,
variances, and covariances are taken under $\rho$, as in
\zcref{sec:weighted-moments}.

Under the assumptions of \zcref{sec:weighted-moments}, it remains
to prove $O>0$ for every $0<\mu\le1$ for which the moment identity
can hold. Two variance estimates suffice when $\mu\le11/20$; the bounds
for $y$ exclude $\mu\ge29/40$. On the remaining interval, the proof
uses a quantitative covariance bound before combining the five inequalities.
The rational multipliers and polynomial checks are given in
\zcref{app:arithmetic}.

\subsection[The range 0 < mu <= 11/20]{The range $0<\mu\le11/20$}

Two applications of the variance inequality give the following uniform
positive lower bound.

\begin{proposition}\label{prop:small-positive}
Under the standing assumptions of \zcref{sec:weighted-moments}, if
$0<\mu\le11/20$, then $O>2/25$.
\end{proposition}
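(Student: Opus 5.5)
The plan is to start from the identity \eqref{eq:normalized-J}, $O=2V_T+3V_a+24\chi+2y^2-9$, and to show that for small $\mu$ the term $2y^2$ alone nearly beats $-9$. The bound $y\ge k=(1-\mu)/\mu$ from \eqref{eq:global-mean-bounds} handles $\mu$ well below $11/20$: it gives $2y^2\ge 2k^2$. At $\mu=11/20$, however, $k=9/11$, which is far too small. So the mean bound alone fails, and I will instead use Lemma~\ref{lem:general-mean-bound} together with two variance inequalities that supply positive multiples of $V_T$ and $\chi$.

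Concretely, I would proceed in four steps.

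\emph{Step 1 (lower bound on $y$).} Lemma~\ref{lem:general-mean-bound} gives $y\ge 1-\mu/4-g/2+\Sigma/2-\theta\xi/(d+\gamma)$. Since $\Sigma\ge d$, the function $d\mapsto d/2-\theta\xi/(d+\gamma)$ is increasing. Combining this with $d>\theta^2/\kappa$, $\theta\le\kappa$ and $\xi=(1+g)/\mu$, I get an explicit lower bound for $y$ in terms of $\mu$, $g$ and $\theta$. Minimizing over $\theta\in[0,\kappa]$ then leaves a bound depending only on $\mu$ and $g$. The penalty term $\theta\xi/(d+\gamma)\le\kappa(1+g)/(\mu\gamma)$ stays modest, although for small $\mu$ it grows like $1/\mu$. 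In that regime I would instead use $y\ge k$ and take the maximum of the two bounds.

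\emph{Step 2 (variance inequalities).} Apply \eqref{eq:variance-inequality} twice, with the $\mathcal H$ bound \eqref{eq:H-moment-upper-bound}. The first choice is $f=\mathcal T$, where $\mathcal T'\ge\frac32(\mathcal X'+w')$ gives $\E(\mathcal T'u)/M^2\ge 3R_a+\dots$. The second choice is $f=w$ or $f=a$, using $\E(f'u)$ expressed through $R_a=\frac12-\chi$ and $R_j$. Each application, with $\eta$ chosen optimally (or set to a fixed rational), gives a lower bound of the form $\mathrm{Var}\ge 2\eta\,\E(f'u)-\eta^2\overline H_\theta$. Its positive part comes from the $R_a=\tfrac12-\chi$ contribution, and it brings in $V_T$, $V_w$, $R_j$, $y^2$ with controlled signs.

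\emph{Step 3 (combination).} I would add suitable nonnegative multiples of these two inequalities to $O$. The aim is that the coefficients of $V_T$, $V_w$, $R_j$ and $\chi$ in the sum are all nonnegative, after using $V_a\le\mu\chi$, $\chi\le\mu V_w$ and $V_a\le 2g\chi$ where needed. What remains is a scalar function of $(\mu,g,\theta,y)$ with a positive $y^2$ coefficient. Inserting the Step~1 lower bound for $y$ then gives a one-variable rational inequality on $0<\mu\le 11/20$, with $g\in[0,\mu/2]$, to be checked exceeding $2/25$.

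\emph{Main obstacle.} The hard part will be choosing the multipliers $\eta$ so that the $-\eta^2\overline H_\theta$ term does not destroy the gain near $\mu=11/20$. Within $\overline H_\theta$, the dangerous pieces are the $+y^2$ term and the $\Sigma\xi/2$ term, which can be large when $d$ is large. My first attempt would be to pair the $\Sigma\xi/2$ penalty against the $\Sigma/2$ gain in the $y$ lower bound: since $\xi\le 1/M$ and $y$ grows linearly in $\Sigma$ while $2y^2$ grows quadratically, large $\Sigma$ should only help. I would then verify the residual polynomial inequality by a case split at a moderate value of $\mu$ (say $\mu=2/5$), using $y\ge k$ below this value and the Step~3 combination above it.
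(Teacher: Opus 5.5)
You have the paper's architecture: two applications of \eqref{eq:variance-inequality}, subtracting multiples from $O$ to cancel $V_T$, eliminating the remaining moments by $V_a\le\mu\chi$, $V_w\ge\chi/\mu$, $R_j\ge kR_a$, and absorbing the $\Sigma$-term of $\overline H_\theta$ through Lemma~\ref{lem:general-mean-bound}. The gap is the test functions you commit to in Step~2. They are too weak, and no choice of multipliers or of $\eta$ repairs this.

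\textbf{Why $f=\mathcal T$ and $f=a$ fail.} Used separately, these give only $V_T\,\mathcal H\ge(6R_a+3R_j)^2$ and $V_a\,\mathcal H\ge4R_a^2$, together with $\mathcal H\le\overline H_\theta$ and the comparisons. (Note that $\mathcal T'\ge\frac32(\mathcal X'+w')$ gives $\E(\mathcal T'u)/M^2\ge6R_a+3R_j$, not $3R_a$.) Consider the following admissible values.
\begin{itemize}
\item Scalar parameters: $\mu=11/20$, $\theta=0$, $g=27/100$, so $\xi\approx2.309$, $a_0=0.7275$, $k=9/11$.
\item Mean bound: $y=k$ and $\Sigma=2(k-a_0)$, which is equality in Lemma~\ref{lem:general-mean-bound}.
\item Moments: $\chi=1/8$, $V_a=2g\chi$, $V_w=\chi/(2g)$, $R_j=3/5$, $V_T=2$, and $\mathcal H=\overline H_0\approx8.38$.
\end{itemize}
Then $V_a\mathcal H\approx0.566\ge9/16$ and $V_T\mathcal H\approx16.76\ge(81/20)^2$. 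All the comparisons also hold: $V_a\le\mu\chi$, $\chi\le\mu V_w$, $\chi^2\le V_aV_w$, $kR_a\le R_j\le\xi/2$, and $k\le y\le(1-3\mu/2)\xi+3/4$. Even the cruder bound $\mathcal H\le\xi(3+y+\Sigma/2)-R_a\approx8.65$ from the proof of Lemma~\ref{lem:middle-covariance-floor} is respected. Yet $O=2V_T+3V_a+24\chi+2y^2-9\approx-0.46$.

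Every inequality in your plan holds at this configuration. So any nonnegative combination of them leaves a residual that is at most $O<0$ at these admissible values of $(\mu,g,\theta,y,\Sigma)$. The scalar inequality you aim for in Step~3 is therefore false there.

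\textbf{Why $f=w$ fails.} Replacing $a$ by $w$ is worse, because nothing then forces $\chi>0$. The values $\chi=V_a=0$, $R_j=kR_a$, $V_w=0.45$, $V_T=2.5$ are consistent with all your inequalities and give $O\approx-2.7$.

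\textbf{The missing idea.} Use test functions whose variances contain the cross-covariances. The paper takes $f=w-a$ with $\eta=1/3$ and $f=\mathcal T-w$ with $\eta=3/10$, with multipliers $601/100$ and $11/9$, and uses $\mathcal H\le H_{11/40}$.
\begin{itemize}
\item For $f=w-a$: the identity $\E((w-a)'u)/M^2=4R_a+2R_j$ doubles the gain from $R_a=\frac12-\chi$ and ties $R_j$ to $\Var(w-a)/M^2=V_a+V_w+2\chi$. This is exactly what the configuration above violates: $(V_a+V_w+2\chi)\mathcal H\approx4.6<(4R_a+2R_j)^2=7.29$.
\item For $f=\mathcal T-w$: by \eqref{eq:covariance-T-w}, $\Var(\mathcal T-w)/M^2=V_T-3V_w-2\chi+2R_j-y$. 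Its $-3V_w$ pays for the $V_w$ in $\Var(w-a)$. Its $-y$ produces the linear term in the residual $O\ge\frac{11}{9}(y^2+y)-\frac{7087}{4950}-\frac79\Xi$. Without that $+\frac{11}{9}y$ there is no room near $\mu=11/20$.
\end{itemize}

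\textbf{Step 1.} Drop it. Minimizing $\Sigma$ and $\theta$ out of the $y$-bound leaves the $\Sigma\xi/2$ term of $\overline H_\theta$ uncontrolled. Use instead the pairing you describe at the end. Lemma~\ref{lem:general-mean-bound} gives $\Xi\le\xi(y-a_0)+\theta\bigl(\xi^2/(d+\gamma)-\xi/2+\frac14\bigr)$. The resulting lower bound is increasing in $y$ for $y\ge k$. Evaluating at $y=k$ gives a quadratic in $k$ that exceeds $2/25$ for all $k\ge9/11$, so no case split at $\mu=2/5$ is needed.
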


\begin{proof}
Put $\Xi=(\Sigma-\theta){\xi}/2+\theta/4$.
Since $\theta\le\kappa<11/40$,
we may use the upper bound $H_{11/40}$ in the variance inequality
\eqref{eq:variance-inequality}. Apply it to $(f,\eta)=(w-a,1/3)$ and
$(\mathcal T-w,3/10)$. The resulting quantities
\begin{equation}\label{eq:two-variance-inequalities}
\begin{aligned}
 L_a&={V_a}+{V_w}+2\chi-\frac23(4{R_a}+2{R_j})+\frac19H_{11/40},\\
 L_{\mathcal T}&={V_T}-3{V_w}-2\chi+2{R_j}-y-\frac35(4{R_a}+{R_j})
                                  +\frac9{100}H_{11/40}
\end{aligned}
\end{equation}
are nonnegative. The two variances, using \eqref{eq:covariance-T-w}
for the second, are $M^2({V_a}+{V_w}+2\chi)$ and
$M^2({V_T}-3{V_w}-2\chi+2{R_j}-y)$, while
\[
 \frac{\E[(w-a)'u]}{M^2}=4{R_a}+2{R_j},\qquad
 (\mathcal T-w)'\ge u(4a+j).
\]
The latter derivative bound follows from
\eqref{eq:profile-increment-comparisons}; replacing $\E(f'u)$ by this
lower bound preserves the inequality because $\eta>0$.

We combine $L_a$ and $L_{\mathcal T}$ with $O$ so that the remaining
variance and mixed-moment terms can be bounded using
$V_a\le\mu\chi$, $V_w\ge\chi/\mu$, and $R_j\ge kR_a$.
The multipliers below leave a positive coefficient of $\chi$ after
these comparisons throughout $0<\mu\le11/20$.
Substituting \eqref{eq:H-enlarged-bound} and ${R_a}=1/2-\chi$ gives
\begin{equation}\label{eq:small-mu-identity}
\begin{aligned}
 O-\frac{601}{100}L_a-\frac{11}{9}L_{\mathcal T}
 ={}&\frac{11}{9}(y^2+y)-\frac{659}{450}-\frac79\Xi\\
 &-\frac{301}{100}{V_a}+\frac{997}{1800}{V_w}
                    +\frac{409}{450}\chi+\frac2{25}{R_j}.
\end{aligned}
\end{equation}
By \eqref{eq:variance-comparisons} and $j\ge ka$,
\[
 {V_a}\le\frac{11}{20}\chi,\qquad
 {V_w}\ge\frac{20}{11}\chi,\qquad
 {R_j}\ge\frac9{11}{R_a}.
\]
Consequently,
\begin{equation}\label{eq:low-identity}
\begin{aligned}
 O&\ge\frac{11}{9}(y^2+y)-\frac{7087}{4950}-\frac79\Xi
          +\frac{38611}{198000}\chi\\
  &\ge\frac{11}{9}(y^2+y)-\frac{7087}{4950}-\frac79\Xi.
\end{aligned}
\end{equation}

Put $\bar {\xi}=k+3/2$ and $\gamma_*=69/20$. The bounds in this range give
\[
 k\ge\frac9{11},\qquad 1\le {\xi}\le \bar {\xi},\qquad
 a_0\ge\frac{29}{40},\qquad d+{\gamma}\ge \gamma_*,\qquad y\ge k>\frac{29}{40}.
\]
The lower bound \eqref{eq:lower-bound-y} for $y$ implies
\[
 \Xi\le {\xi}(y-a_0)+\theta\left(\frac{{\xi}^2}{d+{\gamma}}-\frac {\xi}2+\frac14\right)
 \le \bar {\xi}\left(y-\frac{29}{40}\right)
      +\kappa\left(\frac{\bar {\xi}^2}{\gamma_*}-\frac{\bar {\xi}}{2}+\frac14\right).
\]
To justify the $y$-dependent part of the second inequality, set
\[
 r=\frac{2g}{\mu}\in[0,1],\qquad
 D=\bar\xi\left(y-\frac{29}{40}\right)-\xi(y-a_0).
\]
Since
\[
 \frac{\partial D}{\partial y}=\bar\xi-\xi=\frac{1-r}{2}\ge0
 \qquad\text{and}\qquad y\ge k,
\]
it is enough to consider $D$ at $y=k$. As a function of $r$,
$D|_{y=k}$ is concave, and its endpoint values are
\[
 \left.D\right|_{y=k,r=0}
   =\frac{62-89\mu}{80\mu}>0,\qquad
 \left.D\right|_{y=k,r=1}
   =\frac{(\mu+2)(11-20\mu)}{80\mu}\ge0.
\]
Thus $D\ge0$ for $0<\mu\le11/20$. For the remaining term, the function
\[
 z\longmapsto\frac{z^2}{\gamma_*}-\frac z2+\frac14
\]
is nonnegative and increasing for $z\ge1$. Hence
$\theta\le\kappa$ and $\xi\le\bar\xi$ give the asserted bound for the
$\theta$-dependent term. 
Substitution in \eqref{eq:low-identity} gives
\[
 O\ge\frac{11}{9}(y^2+y)-\frac{7087}{4950}
 -\frac79\left[
 \bar {\xi}\left(y-\frac{29}{40}\right)
 +\kappa\left(\frac{\bar {\xi}^2}{\gamma_*}-\frac{\bar {\xi}}{2}+\frac14\right)\right].
\]
This expression increases for $y\ge k$, since its $y$-derivative is
at least $5k/3+1/18>0$. Set $y=k$ and expand to obtain
\begin{equation}\label{eq:small-mu-positive}
 O\ge\frac{26195840k^2+37004946k-42207909}{68310000}
 \ge\frac{4110277}{50094000}>\frac2{25}>0.
\end{equation}
The quadratic increases for $k\ge0$, so its minimum for $k\ge9/11$
is its value at $9/11$.
\end{proof}

\subsection[Exclusion of mu >= 29/40]{Exclusion of $\mu\ge29/40$}
\label{sec:large-mu-exclusion}

For every $0<\mu\le1$, the bound \eqref{eq:lower-bound-y} and
$d>\theta^2/\kappa$ imply
\begin{equation}\label{eq:theta-independent-bound}
 y>a_0+\frac{\theta^2}{2\kappa}-\frac{\theta {\xi}}{4-\mu}
 \ge a_0-\frac{\kappa {\xi}^2}{2(4-\mu)^2}.
\end{equation}
The second inequality is completion of the square in $\theta$.
We now compare \eqref{eq:theta-independent-bound} with the upper bound
for $y$ in \eqref{eq:global-mean-bounds} to exclude large $\mu$.

\begin{proposition}\label{prop:large-mu-exclusion}
Under the standing assumptions of \zcref{sec:weighted-moments}, one has
 $\mu < 29/40$.
\end{proposition}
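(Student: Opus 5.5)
The plan is to play the two bounds on $y$ against each other: the lower bound \eqref{eq:theta-independent-bound}, which is already free of $\theta$ and $d$, and the upper bound from \eqref{eq:global-mean-bounds}. Both are explicit in $\mu$ and $g$ once we substitute $\xi=(1+g)/\mu$ and $a_0=1-\mu/4-g/2$. Define
\[
 \Phi(\mu,g)=1-\frac\mu4-\frac g2-\frac{\kappa(1+g)^2}{2\mu^2(4-\mu)^2}
 -\Bigl(1-\frac{3\mu}2\Bigr)\frac{1+g}{\mu}-\frac34 .
\]
Suppose, for contradiction, that $\mu\ge29/40$. Combining the strict inequality in \eqref{eq:theta-independent-bound} with $y\le(1-3\mu/2)\xi+3/4$ gives $\Phi(\mu,g)<0$. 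So it suffices to show that $\Phi(\mu,g)\ge0$ on the region
\[
 \frac{29}{40}\le\mu\le1,\qquad 0\le g\le\frac\mu2 .
\]
This region is exactly what \eqref{eq:global-mean-bounds} allows, since $0<\mu\le1$ by \zcref{lem:normalized-estimates}.

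The first reduction is in $g$. For fixed $\mu$, the map $g\mapsto\Phi(\mu,g)$ is the sum of an affine function of $g$ and the term $-\kappa(1+g)^2/(2\mu^2(4-\mu)^2)$, which is concave. Hence $\Phi(\mu,\cdot)$ is concave, and its minimum over $[0,\mu/2]$ is attained at one of the endpoints $g=0$ or $g=\mu/2$. This leaves two one-variable inequalities on $[29/40,1]$:
\[
 \Phi(\mu,0)\ge0,\qquad \Phi(\mu,\mu/2)\ge0 .
\]
After multiplying by the positive factor $\mu^2(4-\mu)^2$ (times a constant clearing the denominators of $\kappa=169/625$), each becomes positivity of a polynomial of low degree, at most about five, on a compact interval.

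The inequality at $g=0$ has ample room. At $\mu=29/40$ the two sides are roughly $0.795$ versus $0.629$, and the margin only grows toward $\mu=1$. I would verify it by a crude monotonicity or interval bound: bound the $\kappa$ term above by its value at the smallest $\mu$, and use linear bounds for the rest.

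The main obstacle is the endpoint $g=\mu/2$ near $\mu=29/40$. There the numerical margin is small, about $0.593$ versus $0.586$, so the threshold $29/40$ is essentially sharp for this argument. For this case I would
\begin{itemize}
\item write $\mu=29/40+s$ with $0\le s\le11/40$,
\item expand the cleared-denominator polynomial in $s$,
\item check positivity through a Bernstein-coefficient or sign-of-coefficients certificate, in the same exact-arithmetic style as \zcref{lem:arithmetic-certificate} and \zcref{app:arithmetic}, subdividing the interval once if needed.
\end{itemize}
Once both endpoint polynomials are positive, concavity gives $\Phi\ge0$ on the whole region, which contradicts $\Phi<0$. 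Hence $\mu<29/40$.
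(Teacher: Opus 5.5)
Your proposal is correct and is essentially the paper's argument: the paper also plays \eqref{eq:theta-independent-bound} against the upper bound for $y$ in \eqref{eq:global-mean-bounds} and reduces to the edge $g=\mu/2$. Two shortcuts in the paper make the verification lighter. First, $\partial_g\Phi=1-1/\mu-\kappa(1+g)/(\mu^2(4-\mu)^2)<0$, so $\Phi$ is decreasing in $g$ and your $g=0$ check is unnecessary. Second, the resulting function $\Delta(\mu)=(\mu+5)/4-1/\mu-\tfrac{169}{5000}\bigl[(\mu+2)/(\mu(4-\mu))\bigr]^2$ is increasing on $(0,1]$, because the bracket decreases ($\mu^2+4\mu-8<0$ there), so the single exact evaluation $\Delta(29/40)>0$ replaces your Bernstein certificate.
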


\begin{proof}
Assume $29/40\le\mu\le1$. Writing $g=\mu/M-1$, and 
comparing \eqref{eq:theta-independent-bound} with the upper bound for
$y$, gives
\[
 y>\frac32-\frac\mu4-\frac{\mu}{2M}
 -\frac{\kappa}{2(4-\mu)^2M^2},\qquad
 y\le\frac{1-3\mu/2}{M}+\frac34.
\]
The difference of the lower expression and the upper expression is
\[
 \frac34-\frac\mu4-\frac{1-\mu}{M}
 -\frac{\kappa}{2(4-\mu)^2M^2}.
\]
It increases with $M$ for $0<\mu\le1$, as its derivative is
$(1-\mu)/M^2+\kappa/((4-\mu)^2M^3)>0$. At the smallest allowed
value $M=2\mu/(\mu+2)$ the difference is
\begin{equation}\label{eq:large-mu-exclusion}
 \Delta(\mu)=\frac{\mu+5}{4}-\frac1\mu
 -\frac{169}{5000}
 \left[\frac{\mu+2}{\mu(4-\mu)}\right]^2.
\end{equation}
The bracket is positive and decreases on $(0,1]$, because its
derivative has the sign of $\mu^2+4\mu-8<0$. Thus $\Delta$ increases.
In particular,
\[
 \Delta(29/40)=\frac{85671561}{11545920800}>0.
\]
This contradicts the simultaneous lower and upper bounds for $y$
when $\mu\ge29/40$.
\end{proof}

The remaining argument concerns the intermediate parameter domain
\begin{equation}\label{eq:middle-domain}
 \mathcal D=\left\{\frac{11}{20}\le\mu\le\frac{29}{40},\quad
                         0\le g\le\frac\mu2\right\}.
\end{equation}
In this range set
\begin{equation}\label{eq:middle-basic-scalars}
 \xi=\frac{1+g}{\mu},\qquad \gamma=4-\mu,\qquad
 a_0=1-\mu/4-g/2,\qquad \kappa=169/625.
\end{equation}
\subsection[Eliminating Sigma and theta]{Eliminating $\Sigma$ and $\theta$}

The dependence on $s$ enters only through $\Sigma$. The next lemma uses
\eqref{eq:lower-bound-y} to eliminate $\Sigma$ and then completes the square
in $\theta$.

\begin{lemma}\label{lem:middle-quadratic-reduction}
Let $(\mu,g)\in\mathcal D$, and let $\xi,\gamma,a_0,\kappa$ be as in
\eqref{eq:middle-basic-scalars}. Suppose that $\theta\ge0$, $\Sigma\ge d\ge\theta^2/\kappa$, and
\[
 y\ge a_0+\frac\Sigma2-\frac{\theta {\xi}}{d+{\gamma}}.
\]
Suppose also that $\mathsf A,\mathsf B,\mathsf C,\Lambda$ depend only on
$(\mu,g)$, that $\mathsf A>0$ and $\Lambda>0$, and that
\begin{equation}\label{eq:abstract-middle-bound}
 O\ge\mathsf A y^2+\mathsf B y+\mathsf C
      -\Lambda\left(\frac{(\Sigma-\theta){\xi}}{2}+\frac\theta4\right).
\end{equation}
Let ${\ell}>\Lambda {\xi}$, and define
\begin{equation}\label{eq:middle-quadratic-coefficients}
\begin{gathered}
 \mathfrak a=\frac{{\ell}-\Lambda {\xi}}{2\kappa},\qquad
 \mathfrak b=\Lambda\left(\frac {\xi}2-\frac14\right)-\frac{{\ell}{\xi}}{{\gamma}},\\
 \mathfrak c={\ell}a_0+\mathsf C-\frac{({\ell}-\mathsf B)^2}{4\mathsf A}.
\end{gathered}
\end{equation}
One has $\mathfrak a>0$ and
\begin{equation}\label{eq:middle-discriminant-bound}
 O\ge\mathfrak a\theta^2+\mathfrak b\theta+\mathfrak c
   \ge\frac{4\mathfrak a\mathfrak c-\mathfrak b^2}{4\mathfrak a}.
\end{equation}
\end{lemma}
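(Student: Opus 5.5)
The plan is to reduce the right-hand side of \eqref{eq:abstract-middle-bound} in three substitutions. First I replace the quadratic in $y$ by a linear lower bound. Next I insert the lower bound for $y$ and eliminate $\Sigma$ using $\Sigma\ge d$. Finally I remove $d$ from the $\theta$-terms, using $d\ge0$ in one place and $d\ge\theta^2/\kappa$ in the other. The signs of the coefficients must be tracked at each step; this is the only delicate point, and the choice ${\ell}>\Lambda\xi$ is made to secure it.

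\emph{Step 1 (tangent line).} Since $\mathsf A>0$, completing the square gives, for every real $y$,
\[
 \mathsf A y^2+(\mathsf B-{\ell})y\ge-\frac{({\ell}-\mathsf B)^2}{4\mathsf A}.
\]
Hence $\mathsf A y^2+\mathsf B y+\mathsf C\ge{\ell}y+\mathsf C-({\ell}-\mathsf B)^2/(4\mathsf A)$.

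\emph{Step 2 (insert the bound for $y$).} On $\mathcal D$ one has $\mu>0$ and $g\ge0$, so $\xi=(1+g)/\mu>0$. Together with $\Lambda>0$, this gives ${\ell}>\Lambda\xi>0$. Multiplying the hypothesis on $y$ by ${\ell}>0$ and combining with Step~1 and \eqref{eq:abstract-middle-bound} yields
\[
 O\ge\mathfrak c+\frac{({\ell}-\Lambda\xi)\Sigma}{2}
 +\Lambda\Bigl(\frac\xi2-\frac14\Bigr)\theta-\frac{{\ell}\theta\xi}{d+\gamma}.
\]
Here $\mathfrak c={\ell}a_0+\mathsf C-({\ell}-\mathsf B)^2/(4\mathsf A)$, as in \eqref{eq:middle-quadratic-coefficients}.

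\emph{Step 3 (eliminate $\Sigma$ and $d$).} The coefficient $({\ell}-\Lambda\xi)/2$ of $\Sigma$ is positive, so $\Sigma\ge d$ lets me replace $\Sigma$ by $d$. Since $\theta\ge0$, ${\ell}\xi>0$, $d\ge0$ and $\gamma=4-\mu>0$, I can bound
\[
 -\frac{{\ell}\theta\xi}{d+\gamma}\ge-\frac{{\ell}\theta\xi}{\gamma}.
\]
This term now combines with the linear term to give $\mathfrak b\theta$. The remaining term $({\ell}-\Lambda\xi)d/2$ again has a positive coefficient, so $d\ge\theta^2/\kappa$ bounds it below by $\mathfrak a\theta^2$, where $\mathfrak a=({\ell}-\Lambda\xi)/(2\kappa)>0$. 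This proves the first inequality in \eqref{eq:middle-discriminant-bound}.

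\emph{Step 4 (minimize in $\theta$).} Since $\mathfrak a>0$, minimizing the quadratic over all real $\theta$ gives
\[
 \mathfrak a\theta^2+\mathfrak b\theta+\mathfrak c\ge\frac{4\mathfrak a\mathfrak c-\mathfrak b^2}{4\mathfrak a},
\]
which is the second inequality.

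I do not expect a genuine obstacle. The only care needed is to apply each inequality on $\Sigma$, $d$ and $y$ in the direction allowed by the sign of its coefficient, and that is exactly what ${\ell}>\Lambda\xi$ guarantees.
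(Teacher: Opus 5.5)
Your proposal is correct and follows the paper's proof essentially step for step: a supporting line of slope $\ell$ for the quadratic in $y$, substitution of the lower bound for $y$, elimination via $\Sigma\ge d\ge\theta^2/\kappa$ and $(d+\gamma)^{-1}\le\gamma^{-1}$, and completion of the square in $\theta$. You also check explicitly that $\ell>\Lambda\xi>0$, which is needed to multiply the lower bound for $y$ by $\ell$; the paper uses this sign only implicitly.
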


\begin{proof}
We use a supporting line of slope $\ell>\Lambda\xi$ so that
substituting the lower bound for $y$ leaves a positive coefficient
of $\Sigma$. The supporting-line bound
\[
 \mathsf Ay^2+\mathsf By
 \ge {\ell}y-\frac{({\ell}-\mathsf B)^2}{4\mathsf A}
\]
follows by completing the square. Substitution of the assumed lower bound for $y$ gives
\[
\begin{aligned}
 O\ge{}&{\ell}a_0+\mathsf C-\frac{({\ell}-\mathsf B)^2}{4\mathsf A}
       +\frac{{\ell}-\Lambda {\xi}}{2}\Sigma\\
 &+\Lambda\theta\left(\frac {\xi}2-\frac14\right)
       -\frac{{\ell}\theta {\xi}}{d+{\gamma}}.
\end{aligned}
\]
Use $\Sigma\ge\theta^2/\kappa$ and $(d+{\gamma})^{-1}\le {\gamma}^{-1}$.
The signs ${\ell}-\Lambda {\xi}>0$ and ${\ell}\theta {\xi}\ge0$ give the first bound in
\eqref{eq:middle-discriminant-bound}. Completing the square gives the second.
\end{proof}

\subsection[A lower bound for the covariance]{A lower bound for $\chi$}

The final linear combination retains a positive multiple of $\chi$, so we
need a quantitative lower bound for this covariance. Since $V_a\le2g\chi$,
a small covariance forces a small variance $V_a$. We first bound
$\mathcal H$ above, then apply integration by parts with test function $a$
to obtain a lower bound for $\chi$.

\begin{lemma}\label{lem:middle-covariance-floor}
Under the standing assumptions of \zcref{sec:weighted-moments}, for
$(\mu,g)\in\mathcal D$, put
$\overline H(\mu)=(9+8\mu)/(4\mu^2)$. Then
\begin{equation}\label{eq:middle-covariance-floor}
 \mathcal H<\overline H(\mu)-{R_a},\qquad
 \chi>q_0(\mu,g)=\frac{3+8\mu-8g}{50}.
\end{equation}
\end{lemma}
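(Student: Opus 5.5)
The plan is to prove the two inequalities in order. First we bound $\mathcal H$ above directly from its definition, without the cruder moment expansion $\overline H_\theta$. Then we feed that bound into the variance inequality \eqref{eq:variance-inequality} with test function $f=a$.

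\emph{Step 1: the bound on $\mathcal H$.} Start from \eqref{eq:H-first-upper-bound}, which uses only $vs'\le0$. We want to avoid $\Sigma$, so we do not expand in $s$. Instead we use $\mathcal H=\E\mathcal Y^2/M^2$ together with the identity $\E_\rho(f\mathcal Y)=\E_\rho(f'u)$. The expression $\mathcal Y=u\mathcal V'-u'$ contains $s(v)vu$, and $s\ge d$ gives no upper bound there. So I would go back to the form $\E[u'^2+w\mathcal V'']$ and control the $s$-term, $\E[s w]$, through the mean identity from the proof of \zcref{lem:general-mean-bound}:
\[
 \E_\rho[s(v)vu]=M(y-1)+\E(bu)+\alpha\widehat\E u .
\]
Since $w\le vu$, we have $\E[sw]\le\E[svu]$, and $s\ge0$ is needed for this step. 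This yields
\[
 \E[sw]\le M(y-1)+\E(bu)+\alpha\widehat\E u .
\]
We then bound $\E(bu)$ by \eqref{eq:bound-bu}, and bound $\alpha\widehat\E u$ by $\theta/(d+4-\mu)$ using \eqref{eq:tilted-mean-bound}. The $-\theta\sech^2$ term in $\mathcal V''$ absorbs this, because $\sech^2(\alpha v)\ge 1-w$ and $\theta\le\kappa$. The remaining terms are $\E[u'^2+4u'w-aw]$. Using the pointwise bounds $u'\le1$, $a\le\mu$, $j\le1$, $w\le v^2$ and $\E w=M/2$, together with $M\ge 2\mu/(\mu+2)$ so that $1/M^2\le(\mu+2)^2/(4\mu^2)$, these are bounded by a quantity of the form $(9+8\mu)/(4\mu^2)-R_a$. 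The $-aw$ term is kept exactly, and this is where $-R_a$ comes from. Strictness should follow from $vu>w$ for $v>0$, since $u<v$ off the origin.

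The catch is that $y$ still appears. I would eliminate it with the upper bound $y\le(1-3\mu/2)\xi+3/4$ from \eqref{eq:global-mean-bounds}, and then check that on $\mathcal D$ the total is at most $\overline H(\mu)-R_a$. This is a one-variable rational inequality in $(\mu,g)$.

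\emph{Step 2: the covariance floor.} Apply \eqref{eq:variance-inequality} with $f=a$. Since $a'=-2au$, we get $\E(f'u)/M^2=-2R_a\cdot$(up to the relation $\E(aw)$), so $\E(a'u)/M^2=-2R_a$. This gives
\[
 V_a+4\eta R_a+\eta^2\mathcal H\ge0\quad\text{for every }\eta ,
\]
and with $\eta<0$ it becomes $V_a\ge 4|\eta|R_a-\eta^2\mathcal H$. Optimizing gives $V_a\ge 4R_a^2/\mathcal H$. Now substitute $R_a=1/2-\chi$, the bound $V_a\le 2g\chi$ from \eqref{eq:convex-covariance}, and the bound on $\mathcal H$ from Step 1. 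This produces
\[
 2g\chi\,\bigl(\overline H-\tfrac12+\chi\bigr)>(1-2\chi)^2 ,
\]
a quadratic inequality in $\chi$. If $\chi\le q_0$, this inequality fails throughout $\mathcal D$. That check reduces to a polynomial inequality in $(\mu,g)$ on a rectangle, verified by monotonicity in $g$ and evaluation at the corners, in the style of \zcref{app:arithmetic}.

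\emph{Main obstacle.} The hard part is Step 1: getting a bound on $\mathcal H$ that is free of $\Sigma$ and tight enough. If the Step 1 bound is too lossy, I would fall back on $H_\kappa$ together with \zcref{lem:general-mean-bound} to trade $\Sigma$ against $y$, and then use the $y$ upper bound.
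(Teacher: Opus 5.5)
Your route is the paper's. You bound $\mathcal H$ from \eqref{eq:H-first-upper-bound} using $u'^2\le u'$ and $u'w\le w$, and eliminate $\E_\rho[sw]=M\Sigma/2$ through the mean identity; your chain $\E_\rho[sw]\le\E_\rho[svu]=M(y-1)+\E_\rho(bu)+\alpha\widehat\E u$ is exactly the proof of \zcref{lem:general-mean-bound}. You then replace $y$ by $\overline y$ in both places it occurs, and combine $V_a\mathcal H\ge4R_a^2$ (from $f=a$) with $V_a\le2g\chi$.

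One step is wrong as stated. The term $-\theta\,w\sech^2(\alpha v)$ cannot absorb $\alpha\widehat\E u\le\theta/(d+\gamma)$, because that would require $\E_\rho[w\sech^2(\alpha v)]\ge1/(d+\gamma)$. The left side is at most $M/2$. The only lower information on $M/2$ is $M/2\ge\mu/(\mu+2)$, and $\mu/(\mu+2)<1/(4-\mu)$ for $\mu<1$, so for small $d$ nothing forces the required inequality. Invoking $\sech^2\ge1-w$ does not help either: it only produces $+\theta\E_\rho w^2$, which you cannot bound above here. The repair is what the paper does. Discard the nonpositive $\theta$-term and keep $\theta\xi^2/(d+\gamma)\le\kappa\xi^2/\gamma$. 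This gives $\mathcal H\le H_0-R_a$ with $H_0=\xi\bigl(\tfrac12+\tfrac\mu4-\tfrac{5g}2+(2+\kappa/\gamma)\xi\bigr)$.

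This correction matters because the margin is very thin. $H_0$ is increasing in $g$, and at $g=\mu/2$ one has $\mu^2(\overline H-H_0)=\tfrac14(1-\mu)^2+\tfrac12\mu^3-\tfrac{\kappa}{4-\mu}(1+\tfrac\mu2)^2$. At $\mu=11/20$ this is about $0.006$, whereas the $\kappa\xi^2/\gamma$ term you proposed to drop is about $0.4$. So your final rational check must include that term. It must also keep $\mu\xi=1+g$ exact, so that the $-\tfrac52g\xi$ contribution stays coupled to $\xi$. Decoupled bounds such as $1/M^2\le(\mu+2)^2/(4\mu^2)$, combined with discarding $-\tfrac52g\xi$, overshoot $\overline H$ on all of $\mathcal D$.

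Two smaller points in Step 2. First, record $R_a>0$ (the density is positive and $aw>0$ for $v>0$); this is what gives $\mathcal H>0$ and $g\chi>0$. Second, ``fails for $\chi\le q_0$'' needs more than a check at $\chi=q_0$. With $h_0=\overline H-\tfrac12$, the convex quadratic $\phi(\chi)=(4-2g)\chi^2-(4+2gh_0)\chi+1$ has its vertex at $(2+gh_0)/(4-2g)\ge\tfrac12>q_0$. Hence $\phi$ is decreasing on $[0,q_0]$, and then $\phi(q_0)>0$ suffices. The paper obtains both facts at once from $\phi(\chi)=\tfrac{24}{25}-\mathcal B\chi+(4-2g)(\chi-\tfrac1{10})^2+\tfrac g{50}$ with $\mathcal B=\tfrac{16}5+(2h_0+\tfrac25)g$, together with $\mathcal Bq_0<\tfrac{24}{25}$.
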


\begin{proof}
Write ${R_a}=1/2-\chi$. By \eqref{eq:H-first-upper-bound},
$0<u'\le1$, and the moment identity $\E_\rho w=M/2$,
\begin{align*}
 \mathcal H
 &\le\frac{\E_\rho[u'^2+w(4u'-a+s(v)-
                       \theta\sech^2(\alpha v))]}{M^2}\\
 &\le\frac{\E_\rho u'+4\E_\rho w-
                    \E_\rho(aw)+\E_\rho[s(v)w]}{M^2}
 ={\xi}\left(3+y+\frac\Sigma2\right)-{R_a}.
\end{align*}
Let $\overline y=3/4+(1-3\mu/2){\xi}$ be the upper bound for $y$ in
\eqref{eq:global-mean-bounds}. The lower bound
\eqref{eq:lower-bound-y} gives
\[
 y\le\overline y,\qquad
 \frac\Sigma2\le\overline y-a_0+\frac{\theta {\xi}}{d+{\gamma}}
              \le\overline y-a_0+\frac{\kappa {\xi}}{{\gamma}}.
\]
Hence $\mathcal H\le H_0-{R_a}$, where
\begin{equation}\label{eq:covariance-H0}
 H_0={\xi}\left(3+2\overline y-a_0+\frac{\kappa {\xi}}{{\gamma}}\right)
    ={\xi}\left(\frac12+\frac\mu4-\frac{5g}{2}
                    +\left(2+\frac\kappa {\gamma}\right){\xi}\right).
\end{equation}
Writing $\varepsilon_0=\kappa/{\gamma}$, expansion gives
\[
\begin{aligned}
 \mu^2H_0={}&2+\varepsilon_0+\frac\mu2+\frac{\mu^2}{4}+\left(4+2\varepsilon_0-2\mu+\frac{\mu^2}{4}\right)g
  +\left(2+\varepsilon_0-\frac{5\mu}{2}\right)g^2.
\end{aligned}
\]
Both nonconstant coefficients are positive, so $H_0$ increases in $g$.
At $g=\mu/2$, put $\delta=\mu-11/20\in[0,7/40]$. Direct expansion yields
\[
\begin{aligned}
 2500\mu^2{\gamma}\bigl(\overline H-H_0(\mu,\mu/2)\bigr)
 ={}&\frac{176673}{3200}+\frac{62123}{80}\delta+8531\delta^2+\delta^3(1625-1250\delta)>0.
\end{aligned}
\]
Thus $\mathcal H<\overline H-{R_a}$.

For the test function $a$, the identity $a'=-2au$ and
\eqref{eq:variance-inequality} give
\[
 \frac{\E_\rho(a'u)}{M^2}=-2R_a,\qquad
 V_a+4\eta R_a+\eta^2\mathcal H\ge0\quad(\eta\in\R).
\]
Nonnegativity of this polynomial for every real $\eta$ implies
$V_a\mathcal H\ge4R_a^2$, whereas \eqref{eq:convex-covariance} gives
$V_a\le2g\chi$. Since the
density of $\rho$ is positive and $au^2>0$ on $v>0$, one has
${R_a}=\E(au^2)/M^2>0$, hence $g\chi>0$.
We next turn the resulting quadratic constraint on $\chi$ into a
linear lower bound. Expanding about $\chi=1/10$ will leave a
nonnegative square that can be discarded. Set
\[
 h_0=\overline H-\frac12,\qquad
 \mathcal B=\frac{16}{5}+\left(2h_0+\frac25\right)g.
\]
Using $\mathcal H<h_0+\chi$ gives the expansion
\begin{equation}\label{eq:covariance-square}
\begin{aligned}
 0&>4(1/2-\chi)^2-2g\chi(h_0+\chi)\\
  &=\frac{24}{25}-\mathcal B\chi
      +(4-2g)\left(\chi-\frac1{10}\right)^2+\frac g{50}.
\end{aligned}
\end{equation}
Since $g\le29/80<2$, the square and $g/50$ are nonnegative, so
$\mathcal B\chi>24/25$. As $\chi>0$, also $\mathcal B>0$, and hence
$\chi>24/(25\mathcal B)$.

It remains to show $\mathcal Bq_0<24/25$. The function $\overline H$
decreases, and $\overline H(29/40)=5920/841>7$, so $h_0>13/2$.
The derivative of $\mathcal Bq_0$ with respect to $g$ decreases in $g$,
and at $g=\mu/2$ it is $(15h_0-61)/125>0$. Thus the maximum is at that
edge, where
\[
 \frac{24}{25}-\left.\mathcal Bq_0\right|_{g=\mu/2}
 =\frac{P(\mu)}{1000\mu},\qquad
 P(\mu)=24\mu^3-398\mu^2+468\mu-135.
\]
Now $P''(\mu)=144\mu-796<0$, and
\[
 P(11/20)=\frac{2999}{500}>4,\qquad
 P(29/40)=\frac{33977}{8000}>4.
\]
Concavity gives $P>4$ throughout the interval for $\mu$. Consequently
$\chi>24/(25\mathcal B)>q_0$, as required.
\end{proof}

\subsection{Combining the five moment inequalities}

We choose the multipliers so that $V_a,V_T,V_w,R_j$ cancel and $\chi$
has a positive coefficient. This allows us to use the lower bound from
\zcref{lem:middle-covariance-floor}. The next lemma gives the resulting
algebraic identity and verifies the signs and discriminant estimate used
below. The explicit rational formulas  are given in
\zcref{app:arithmetic}.
Here $\Xi=(\Sigma-\theta)\xi/2+\theta/4$.

\begin{lemma}\label{lem:algebraic-combination}
For $(\mu,g)\in\mathcal D$, let
$\lambda_V,\lambda_C,\lambda_1,\lambda_2,\lambda_{1/3}$ be the rational
functions in \eqref{eq:middle-weights}, let
$\mathsf A,\mathsf B,\mathsf C^0,\Lambda$ be given by
\eqref{eq:middle-raw-coefficients}, and let $\mathcal R$ be given by
\eqref{eq:middle-covariance-coefficient}. The five multipliers are positive,
and
\begin{align}\label{eq:middle-moment-identity}
 O={}&\mathsf Ay^2+\mathsf By+\mathsf C^0
 -\Lambda\left(\frac{(\Sigma-\theta){\xi}}{2}+\frac\theta4\right)
 +\mathcal R\chi\notag\\
 &+\lambda_VR_\square+\lambda_CR_C+\lambda_1R_1
                     +\lambda_2R_2+\lambda_{1/3}R_{1/3}.
\end{align}
After substituting \eqref{eq:normalized-J},
\eqref{eq:combined-square-expansion}, \eqref{eq:nonnegative-moments},
\eqref{eq:H-enlarged-bound}, and $R_a=1/2-\chi$,
\eqref{eq:middle-moment-identity} is an algebraic identity. The scalar
functions depend only on $\mu,g$ and satisfy 
\begin{equation}\label{eq:algebraic-interface-signs}
 \mathsf A>9/10,\qquad 0<\Lambda<11/10,\qquad \mathcal R>0.
\end{equation}
Define
\begin{equation}\label{eq:middle-adjusted-constant}
 \mathsf C_\dagger=\mathsf C^0+
                       \frac{3+8\mu-8g}{50}\mathcal R.
\end{equation}
With $\ell=10/3$ and $\mathsf C=\mathsf C_\dagger$, the coefficients in
\eqref{eq:middle-quadratic-coefficients} satisfy
$4\mathfrak a\mathfrak c-\mathfrak b^2>0$ throughout $\mathcal D$.
\end{lemma}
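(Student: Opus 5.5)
The approach is to determine the five multipliers by linear algebra, so that the unwanted moment variables cancel, and then to reduce everything left to polynomial sign checks on the compact rectangle $\mathcal D$.

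\emph{Setting up the linear system.} After substituting \eqref{eq:normalized-J}, \eqref{eq:combined-square-expansion}, \eqref{eq:nonnegative-moments}, \eqref{eq:H-enlarged-bound} with $t_*=\kappa$, and $R_a=1/2-\chi$, both sides of \eqref{eq:middle-moment-identity} are affine in the moment variables $V_T,V_a,V_w,R_j,\chi$. They also contain $\Xi$, $y$ and $y^2$ with coefficients depending only on $(\mu,g)$. I will require the coefficients of $V_T,V_a,V_w,R_j$ in
\[
O-\lambda_VR_\square-\lambda_CR_C-\lambda_1R_1-\lambda_2R_2-\lambda_{1/3}R_{1/3}
\]
to vanish. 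This gives four linear equations in five unknowns, leaving a one-parameter family. I will fix the free parameter by a convenient normalization, for instance a fixed value of $\lambda_V$ or of $\lambda_C$, and solve by Cramer's rule. The resulting $\lambda$'s are the rational functions of \eqref{eq:middle-weights}.

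\emph{Reading off the scalar coefficients.} The leftover coefficient of $\chi$ defines $\mathcal R$. The coefficients of $y^2$ and $y$ define $\mathsf A$ and $\mathsf B$. The $\Xi$ coefficient gives $-\Lambda$, where only $R_1$, $R_2$ and the $H_\kappa$ terms contribute, so $\Lambda=\tfrac{169}{25}\lambda_1+\tfrac{49}{4}\lambda_2$. The constant term, which includes $20\mu(Q_*-k^2)\lambda_V$ and the constants $5/2$ and $\kappa/4$ from $H_\kappa$, defines $\mathsf C^0$. For example, the $y^2$ coefficient is
\[
\mathsf A=2+20\mu\lambda_V-\bigl(\tfrac{169}{25}\lambda_1+\tfrac{49}{4}\lambda_2\bigr),
\]
and the other coefficients are read off in the same way. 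With these definitions the identity holds by construction.

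\emph{Positivity and the bound for $\chi$.} The sign claims \eqref{eq:algebraic-interface-signs} and positivity of the multipliers then become inequalities for explicit rational functions of $(\mu,g)$ on $\mathcal D$. For each one I will clear the positive denominators, set $\mu=11/20+\delta$ and $g=\mu r/2$ with $\delta\in[0,7/40]$ and $r\in[0,1]$, and certify the numerator. Depending on which works, the certificate is nonnegative coefficients after the substitution, Bernstein expansion on the box, or subdivision with interval bounds. Since $\mathcal R>0$ and $\chi>q_0$ by \zcref{lem:middle-covariance-floor}, we get $\mathcal R\chi\ge q_0\mathcal R$. This is exactly why $\mathsf C_\dagger$ is defined as in \eqref{eq:middle-adjusted-constant}, so that \eqref{eq:abstract-middle-bound} holds with $\mathsf C=\mathsf C_\dagger$.

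\emph{Discriminant check.} For the final claim I take $\ell=10/3$. First I verify $\ell>\Lambda\xi$; this follows from $\Lambda<11/10$ and $\xi\le(1+\mu/2)/\mu\le 3/11+1/2\cdot(20/11)\cdot\ldots$, which I will check gives $\xi<3$ on $\mathcal D$. Then I form $4\mathfrak a\mathfrak c-\mathfrak b^2$ from \eqref{eq:middle-quadratic-coefficients}, multiply by $4\mathsf A\gamma^2\kappa$ and the denominators of the $\lambda$'s, and obtain a bivariate polynomial to be shown positive on $\mathcal D$.

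The main obstacle is this last step. The polynomial has high degree, and the margin is likely thin near the corner $\mu=29/40$, $g=\mu/2$. If a global Bernstein certificate fails, I will subdivide $\mathcal D$ into small rectangles. On each rectangle I will use a Taylor lower bound with rational interval bounds on the derivatives, and use exact rational arithmetic for the endpoint values, as the paper does elsewhere.
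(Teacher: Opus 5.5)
Your architecture matches the paper's. You set up four cancellation equations for $V_T,V_a,V_w,R_j$ in five unknowns, read off $\mathsf A,\mathsf B,\mathsf C^0,\Lambda,\mathcal R$, check the signs, and certify the discriminant numerator with Bernstein coefficients.

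The gap is in how you handle the one free parameter. It is not a normalization. The system is inhomogeneous, since $O$ carries the fixed coefficients $2,3,0,0$ on $V_T,V_a,V_w,R_j$. Each choice of the parameter therefore gives different multipliers and a different discriminant. The statement concerns the specific multipliers in \eqref{eq:middle-weights}, which correspond to $\lambda_2=(24-31\mu)/2250$. They do not come from fixing $\lambda_V$ or $\lambda_C$, so your claim that your normalization reproduces them is unfounded.

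More importantly, the discriminant inequality depends sensitively on this choice, and your example choice fails. Eliminating $\lambda_1$ between the $[V_T]$ and $[R_j]$ equations gives
\[
 \Bigl(\tfrac{115}{2}-\tfrac{440}{9}\mu\Bigr)\lambda_V+\tfrac{6563}{36}\lambda_2=2 .
\]
Positivity of $\lambda_2$ at $\mu=11/20$ forces $\lambda_V<2/30.61\approx0.065$. A constant $\lambda_V$ then gives $\lambda_2>3\times10^{-3}$ at $\mu=29/40$.

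Now evaluate at $(\mu,g)=(29/40,\,0.245)$:
\begin{itemize}
\item With the paper's value $\lambda_2\approx6.8\times10^{-4}$, one has $4\mathfrak a\mathfrak c-\mathfrak b^2\approx0.11$.
\item With $\lambda_2=2\times10^{-3}$, it is already about $-0.11$, and it keeps decreasing as $\lambda_2$ grows.
\end{itemize}
The mechanism is that $\mathsf A=2+20\mu\lambda_V-\Lambda$ drops, which enlarges $(\ell-\mathsf B)^2/(4\mathsf A)$, while $\mathsf C_\dagger$ barely moves. The missing step is to choose the free multiplier as a function of $\mu$ that keeps all five weights positive and makes $\lambda_2$ small near $\mu=29/40$. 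The paper's affine choice, which vanishes at $\mu=24/31$, does exactly this. Nothing in your plan says how to find such a choice.

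Two smaller points:
\begin{itemize}
\item The constant $\mathsf C^0$ contains no $\kappa/4$. In $H_\kappa$ the term is $\theta/4$, which belongs to $\Xi$, and $t_*=\kappa$ changes only the coefficient of $V_w$. Adding $\kappa/4$ would double-count and break the identity.
\item The thin margin is at $\mu=29/40$ with interior $g$, not at the corner $g=\mu/2$. This is why the paper elevates the last Bernstein rows up to degree $9$.
\end{itemize}
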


The covariance estimate and the algebraic identity now allow us to apply
\zcref{lem:middle-quadratic-reduction}.

\begin{proposition}\label{prop:middle-positive}
Under the standing assumptions of \zcref{sec:weighted-moments}, if
$11/20\le\mu\le29/40$, then $O>0$.
\end{proposition}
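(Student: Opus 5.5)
The plan is to assemble the inequalities already established and feed them into \zcref{lem:middle-quadratic-reduction}. Fix $(\mu,g)$ with $11/20\le\mu\le29/40$. By \eqref{eq:global-mean-bounds}, $0\le g\le\mu/2$, so $(\mu,g)\in\mathcal D$. Begin from the algebraic identity \eqref{eq:middle-moment-identity} of \zcref{lem:algebraic-combination}. The five multipliers $\lambda_V,\lambda_C,\lambda_1,\lambda_2,\lambda_{1/3}$ are positive, and \zcref{lem:five-nonnegative-moments} gives $R_\square,R_C,R_1,R_2,R_{1/3}\ge0$ on this range. Hence the whole final line of \eqref{eq:middle-moment-identity} can be discarded, which gives
\[
 O\ge\mathsf Ay^2+\mathsf By+\mathsf C^0-\Lambda\Xi+\mathcal R\chi,
 \qquad \Xi=\frac{(\Sigma-\theta)\xi}{2}+\frac\theta4.
\]

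Next remove $\chi$. By \eqref{eq:algebraic-interface-signs}, $\mathcal R>0$, so the covariance floor of \zcref{lem:middle-covariance-floor}, $\chi>q_0(\mu,g)=(3+8\mu-8g)/50$, gives $\mathcal R\chi>q_0\mathcal R$. Together with \eqref{eq:middle-adjusted-constant}, this yields \eqref{eq:abstract-middle-bound} with $\mathsf C=\mathsf C_\dagger$, and the inequality is strict. The coefficients $\mathsf A>9/10>0$ and $\Lambda>0$ depend only on $(\mu,g)$, as \zcref{lem:middle-quadratic-reduction} requires.

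Now check the remaining hypotheses of \zcref{lem:middle-quadratic-reduction}. The lower bound on $y$ with $\Sigma\ge d>\theta^2/\kappa$ is \zcref{lem:general-mean-bound}, and $\theta\ge0$ holds by \eqref{eq:moment-parameter-bounds}. For the slope, take $\ell=10/3$; we need $\ell>\Lambda\xi$. On $\mathcal D$ we have $\xi=(1+g)/\mu\le(1+\mu/2)/\mu=1/\mu+1/2\le20/11+1/2=51/22$. Using $\Lambda<11/10$ then gives $\Lambda\xi<561/220<10/3$. The lemma now yields $O\ge(4\mathfrak a\mathfrak c-\mathfrak b^2)/(4\mathfrak a)$ with $\mathfrak a>0$, and \zcref{lem:algebraic-combination} asserts that $4\mathfrak a\mathfrak c-\mathfrak b^2>0$ throughout $\mathcal D$ for this choice $\ell=10/3$ and $\mathsf C=\mathsf C_\dagger$. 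Therefore $O>0$.

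The only genuine work sits inside the cited lemmas, in particular the discriminant verification deferred to \zcref{app:arithmetic}. Within this proof, the one point needing care is the bookkeeping. First, the inequality for $\chi$ must enter with the correct sign, which relies on $\mathcal R>0$. Second, the condition $\ell>\Lambda\xi$ has to hold uniformly on $\mathcal D$, which I would confirm using the crude bound on $\xi$ given above.
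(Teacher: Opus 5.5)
Your proposal is correct and follows the paper's proof essentially step for step. You discard the five nonnegative terms in \eqref{eq:middle-moment-identity}, use $\mathcal R>0$ with the floor $\chi>q_0$ to reach \eqref{eq:abstract-middle-bound} with $\mathsf C=\mathsf C_\dagger$, verify $\ell=10/3>\Lambda\xi$ via $\xi\le 51/22$ and $\Lambda<11/10$, and conclude from the discriminant estimate of \zcref{lem:algebraic-combination}; your explicit check that $(\mu,g)\in\mathcal D$ via $0\le g\le\mu/2$ is a small detail the paper leaves implicit.
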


\begin{proof}
In \eqref{eq:middle-moment-identity}, discard the five nonnegative terms
using \zcref{lem:five-nonnegative-moments}, and use
$\chi>(3+8\mu-8g)/50$ from
\zcref{lem:middle-covariance-floor}. Since $\mathcal R>0$, the result
is \eqref{eq:abstract-middle-bound} with
$\mathsf C=\mathsf C_\dagger$. Apply
\zcref{lem:middle-quadratic-reduction} with $\ell=10/3$ and the
coefficients $\mathsf A,\mathsf B,\Lambda$ from
\zcref{lem:algebraic-combination}. The signs
$\mathsf A>0$ and $\Lambda>0$ follow from
\eqref{eq:algebraic-interface-signs}.
\zcref{lem:general-mean-bound} supplies the required lower bound
for $y$ and $\Sigma\ge d>\theta^2/\kappa$, while
\eqref{eq:moment-parameter-bounds} gives $\theta\ge0$.
Moreover, $\xi\le1/\mu+1/2\le51/22$ on $\mathcal D$, so
\[
 \ell-\Lambda\xi>\frac{10}{3}-\frac{51}{20}=\frac{47}{60}>0.
\]
All hypotheses of the reduction are therefore satisfied. Its lower
bound \eqref{eq:middle-discriminant-bound} is strictly positive by
the discriminant estimate in \zcref{lem:algebraic-combination}.
Hence $O>0$.
\end{proof}

\subsection{Completion of the strict inequality}\label{sec:strict-inequality-completion}

\begin{proof}[Proof of \zcref{thm:derivative-moments}]
In this proof, $\E$ again denotes expectation under the original diffusion
law; weighted expectations are written $\E_\rho$.
Fix $t$ and keep the original coefficient in the PDE and diffusion.
Construct the normalized functions using the fixed value $m_0$ from the
hypotheses of \zcref{thm:derivative-moments}. By Girsanov's theorem and
boundedness of the drift, the law of $Y_t$ is equivalent to the law of
$h+W_t$. It therefore gives positive mass to every nonempty open interval.
The function
$C_x(t,\cdot)$ is not identically zero: $C>0$, whereas the bounded
increasing function $B$ has finite limits at both spatial infinities.
Thus $F(t)>0$. In particular, $m_0=0$ is incompatible with the
moment identity $F-2m_0G=0$.

For $m_0>0$, set $m_{\mathrm{rev}}(r)=m(L-r)$.
The hypothesis on $(t,L]$ gives $m_{\mathrm{rev}}\ge m_0$
almost everywhere on $[0,L-t]$. Thus
\zcref{lem:normalized-estimates} applies at reverse time $L-t$
with this fixed value $m_0$, giving the scale and normalized variables:
\[
 \begin{gathered}
 c_0=C(t,0),\qquad
 \lambda^2=-\frac{C_{xx}(t,0)}{2c_0}>0,\qquad
 \mu=\frac{m_0c_0}{\lambda^2},\\
 v=\lambda x,\qquad
 a(v)=\frac{m_0C(t,v/\lambda)}{\lambda^2},\qquad
 u(v)=-\frac{C_x(t,v/\lambda)}{2\lambda C(t,v/\lambda)}.
 \end{gathered}
\]
The PDE estimates give $0<\mu\le1$ and the differential inequalities
\eqref{eq:moment-profile-inequalities}. The probability measure $\rho$ is defined, for every
bounded Borel function $f$ on $[0,\infty)$, by
\[
 \E_\rho f(v)=\frac{\E[C(t,Y_t)^2f(\lambda|Y_t|)]}{S(t)}.
\]
Apply \zcref{lem:lower-density} with $m_*=m_0$ and this same
coefficient $m$. Its bounds before and after $t$ are exactly the
two one-sided hypotheses of \zcref{thm:derivative-moments}.
It gives \eqref{eq:moment-density}, with $d=1/(\lambda^2t)$,
$\alpha=h/(\lambda t)$, and \eqref{eq:coefficient-s-bounds}.
Finally, \zcref{prop:parameter-bounds} applies to the original
diffusion and the hypothesis $N(t)\le0$, giving
\eqref{eq:moment-parameter-bounds}.

By \eqref{eq:weighted-moment-scaling}, the hypothesis $F-2m_0G=0$ is
exactly the moment identity \eqref{eq:moment-constraint}.
By \zcref{prop:large-mu-exclusion}, one has $\mu<29/40$.
The ranges $0<\mu\le11/20$ and $11/20<\mu<29/40$ are covered by
\zcref{prop:small-positive,prop:middle-positive}, respectively.
Hence $O=\E_\rho {\mathcal J}/M^2>0$ whenever the moment identity holds.
Since $\lambda,S,M$ are positive, \eqref{eq:weighted-moment-scaling}
gives $J(t;m_0)>0$.
\end{proof}

\section{Proof of the main theorem}\label{sec:structure}

The proof combines the gap argument of \cite{Zhou2026FRSB} and
\cite[Propositions~9.1--9.2]{Lopatto2026} with the positive-field inequality
in \zcref{thm:derivative-moments}. After excluding gaps, we retain
one-sided values in the regularity argument and apply the same inequality
to the density and endpoint masses.

\begin{proof}[Proof of \zcref{thm:main}]
\emph{Step 1: Connected support.}
Write $\nu=\nu_{\beta,h}$. Suppose that its support is disconnected.
Since it is compact and avoids $0,1$, there exist support points $0<a<b<1$ such that
$(a,b)$ is disjoint from the support. Put ${t_-}=La$, ${t_+}=Lb$.
Let $m_0$ be the constant value of $m$ on $({t_-},{t_+})$.
For every $t$ in this gap, monotonicity gives $m\le m_0$ before
$t$ and $m\ge m_0$ after $t$. Thus the coefficient hypotheses of
\zcref{thm:derivative-moments} hold throughout the gap.
\zcref{lem:variational-support} gives
\[
 Q({t_-})/{t_-}=Q({t_+})/{t_+}=L^{-1},\qquad N({t_-}),N({t_+})\ge0.
\]
Moreover,
\[
 \int_{t_-}^{t_+}\frac{N(t)}{t^2}\dd t
 =-\left[\frac{Q(t)}t\right]_{t_-}^{t_+}=0.
\]
If $N$ is not identically zero, these facts force a negative interior
minimum at some $t_0$. At that point, the identities
\[
 N'=-tQ'',\qquad N''=-Q''-tJ(t;m_0)
\]
give $F(t_0)-2m_0G(t_0)=Q''(t_0)=0$ and $J(t_0;m_0)\le0$.
Together with $N(t_0)<0$, this contradicts
\eqref{eq:strict-derivative-moment-inequality}.
If $N\equiv0$, then $F-2m_0G=Q''=0$ and $J(t;m_0)=0$
throughout the gap, giving the same contradiction.
Thus the support is connected. A nonempty compact connected subset
of $(0,1)$ is a closed interval, possibly a singleton; write it as
$[q_-,q_+]$. A probability measure has singleton support exactly when it is a point
mass. This proves the assertion about replica symmetry. For the remaining
conclusions, assume $q_-<q_+$.

\medskip
\emph{Step 2: A smooth positive density.}
We follow the proof of \cite[Theorem~2(ii)]{AuffingerChen2015Properties}:
first identify $m$ with $F/(2G)$ and then iterate the differentiated moment
identities, retaining the one-sided values at both endpoints.
Put ${t_-}=Lq_-$ and ${t_+}=Lq_+$.
By the support conditions, $Q(t)=t/L$ on $[{t_-},{t_+}]$.
By \eqref{eq:ito-Q}--\eqref{eq:ito-S},
\begin{equation}\label{eq:cdf-quotient}
 m(t)=\frac{F(t)}{2G(t)}\qquad\text{for almost every }t\in({t_-},{t_+}).
\end{equation}
Both $F$ and $G$ are Lipschitz. Since $C>0$, the continuous function
$G$ has a positive minimum on $[{t_-},{t_+}]$. The right side of
\eqref{eq:cdf-quotient} is consequently Lipschitz on the closed interval.
Right-continuity of the cumulative distribution extends the equality
to every $t\in[{t_-},{t_+})$; its value at ${t_+}$ is $m({t_+}-)$.
Thus there is no atomic or singular continuous part in $(q_-,q_+)$.

For the endpoint bootstrap, define the continuous representative
\[
 \bar m(t)=\frac{F(t)}{2G(t)},\qquad t\in[t_-,t_+].
\]
Then $\bar m(t)=m(t)$ for $t\in[t_-,t_+)$ and
$\bar m(t_+)=m(t_+-)$, so $m=\bar m$ almost everywhere on
$[t_-,t_+]$. Hence the polynomial-moment identities in
\zcref{lem:ito-moments} may be written, for $t_-\le s\le t\le t_+$, as
\[
 M_P(t)-M_P(s)
 =\int_s^t\bigl(M_{P_0}(r)+\bar m(r)M_{P_1}(r)\bigr)\dd r.
\]
In particular, the right sides of
\eqref{eq:ito-F}--\eqref{eq:ito-G} extend continuously to the endpoints
with $m$ there interpreted as $\bar m$. We use this convention in the
remainder of this step. Differentiating 
\eqref{eq:cdf-quotient} gives
\begin{equation}\label{eq:cdf-derivative}
 \begin{aligned}
  2G(t)m'(t)&=F'(t)-2m(t)G'(t)\\
  &=\E[C_{xx}^2-12mCC_x^2+6m^2C^4]=J(t;m(t)).
 \end{aligned}
\end{equation}
For smoothness, consider simultaneously all moment functions
\[
 t\longmapsto
 \E\!\left[P\bigl(B(t,Y_t),C(t,Y_t),C_x(t,Y_t),\ldots\bigr)\right],
\]
where $P$ is a polynomial in finitely many positive-order spatial
derivatives of $U$. 
The quotient above first gives Lipschitz continuity of the interior
extension of $m$. We bootstrap this regularity using the polynomial-moment
integral identities established from \eqref{eq:all-derivative-ito} in
\zcref{lem:ito-moments}.

Apply these identities first on compact subintervals of
$({t_-},{t_+})$. If $m$ and all the polynomial moment functions are $C^k$,
the drift of each such moment is $C^k$, so every moment is $C^{k+1}$; the
quotient $m=F/(2G)$ then makes $m$ $C^{k+1}$. Starting with continuity proves
smoothness by induction. At each stage the integrands extend continuously to
the endpoints, so the same integral identities give the corresponding
one-sided derivatives there.

On $[{t_-},{t_+}]$ one has $Q=t/L$ and $S=1/L$, so $N=0$.
The quotient identity gives $F=2m_0G$ also at both endpoints, with
$m_0$ equal to the respective one-sided value of $m$ from the interior.
Each one-sided value satisfies the hypotheses on $m_0$ in
\zcref{thm:derivative-moments}.
\zcref{thm:derivative-moments} therefore gives
$J(t;\bar m(t))>0$ at every point of the closed interval. Equation~\eqref{eq:cdf-derivative} shows
that $m'>0$ there. In the original coordinate the density is
$f(q)=L m'(Lq)$.

\medskip
\emph{Step 3: Endpoint masses.}
By Girsanov's theorem, the diffusion law at ${t_-}>0$ gives positive mass
to every nonempty open interval. Since $C_x({t_-},\cdot)$ is not identically
zero, $F({t_-})>0$.
Thus \eqref{eq:cdf-quotient} gives
$\nu(\{q_-\})=m({t_-})>0$.
For the upper endpoint, we use the moment calculation for the terminal
profile $\log\cosh$ from the proof of 
\cite[Theorem~4]{AuffingerChen2015Properties}, in the form used in
\cite[Proposition~9.3]{Lopatto2026} and
\cite[Proposition~2.1]{Chen2026EndpointMass}. The quantitative input is 
\eqref{eq:theta-independent-bound}, obtained from
\zcref{lem:general-mean-bound}.

Put $m_+=m({t_+}-)\in(0,1]$. For $t>{t_+}$, the cumulative distribution function equals one.
The Cole--Hopf formula gives
$U({t_+},x)=\log\cosh x+(L-{t_+})/2$. Thus $c_0=\lambda=1$, $\mu=m_+$, and
\[
 u(v)=\tanh v,\qquad a(v)=m_+\sech^2v,\qquad
 j(v)=(1-m_+)\sech^2v.
\]
The moment identity $\E_\rho a=2\E_\rho u^2$ at ${t_+}$, evaluated with
$m_0=m({t_+}-)$, gives
\begin{equation}\label{eq:terminal-moments}
 M=\frac{2m_+}{m_++2},\qquad g=\frac{m_+}{2},\qquad
 \xi=\frac1M=\frac{m_++2}{2m_+},\qquad
 y=\frac{1-m_+}{m_+}.
\end{equation}
Since $N(t_+)=0$, \zcref{prop:parameter-bounds} gives the parameter
bounds \eqref{eq:moment-parameter-bounds}. The terminal moment identity
gives \eqref{eq:moment-constraint}, so \eqref{eq:theta-independent-bound}
applies with $m_0=m(t_+-)$, including when $m_+=1$.
Using \eqref{eq:terminal-moments} and $a_0=1-m_+/2$ in
\eqref{eq:theta-independent-bound} gives
\[
 \frac{1-m_+}{m_+}>1-\frac{m_+}{2}
 -\frac{\kappa}{2(4-m_+)^2}
       \left(\frac{m_++2}{2m_+}\right)^2.
\]
Move the left side to the right and use $\kappa=169/625$ to obtain
\begin{equation}\label{eq:terminal-H}
 H(m_+)=2-\frac{m_+}{2}-\frac1{m_+}
 -\frac{169}{5000}
  \left(\frac{m_++2}{m_+(4-m_+)}\right)^2<0.
\end{equation}
For $0<m\le1$,
\[
 \left(2-\frac m2-\frac1m\right)'=\frac1{m^2}-\frac12>0,
 \qquad
 \left(\frac{m+2}{m(4-m)}\right)'
 =\frac{m^2+4m-8}{m^2(4-m)^2}<0.
\]
If $m_+\ge5/8$, these monotonicities imply
\[
 H(m_+)>\frac7{80}-\frac{169}{5000}\left(\frac54\right)^2
 =\frac{111}{3200}>0,
\]
since $(m+2)/(m(4-m))\le56/45<5/4$ at $m\ge5/8$.
This contradicts \eqref{eq:terminal-H}. Hence $m_+<5/8$ and
$\nu(\{q_+\})=1-m_+>3/8$.
Together with Step~2, these endpoint masses give the asserted
measure decomposition and complete the proof.
\end{proof}

\appendix
\zcsetup{countertype={section=appendix}}

\section{The algebraic estimates}\label{app:arithmetic}

The first part of this appendix proves the algebraic lemma used for
intermediate $\mu$. The second verifies the three polynomial inequalities:
two for the Gaussian comparison and one for the discriminant estimate
in the intermediate-$\mu$ moment argument.  All coefficients are specified by rational formulas and finite
sums.

\subsection{Verification of the linear combination}

\begin{proof}[Proof of \zcref{lem:algebraic-combination}]
The coefficients are chosen to cancel $V_a,V_T,V_w,R_j$. This leaves only
$y$, the scalar term $\Xi$, and the covariance $\chi$, which are controlled
by \eqref{eq:lower-bound-y} and \zcref{lem:middle-covariance-floor}. Substitution of the five nonnegative
quantities shows that the cancellation equations are
\begin{equation}\label{eq:multiplier-cancellations}
\begin{aligned}
 (25-20\mu)\lambda_V+\Lambda+100\lambda_2&=2
       &&[V_T],\\
 (16-20/\mu)\lambda_V-\lambda_C+36\lambda_1+\lambda_{1/3}&=3
       &&[V_a],\\
 (80\mu-99)\lambda_V+49\lambda_1-279\lambda_2
       +\lambda_{1/3}/9+(\kappa-4)\Lambda&=0
       &&[V_w],\\
 (90-80\mu)\lambda_V-(364/5)\lambda_1+96\lambda_2+8\Lambda&=0
       &&[R_j].
\end{aligned}
\end{equation}
where $\Lambda=(169/25)\lambda_1+(49/4)\lambda_2$.
After substituting this definition of $\Lambda$, the four matching
equations leave one free multiplier. We use this freedom to obtain
positive multipliers and the strict bound
$4\mathfrak a\mathfrak c-\mathfrak b^2>0$ required in
\zcref{lem:algebraic-combination}. Choose
\[
 \lambda_2=\frac{24-31\mu}{2250}
\]
and solve for $\lambda_V,\lambda_C,\lambda_1,\lambda_{1/3}$.
The system is nonsingular throughout the stated interval because
$D_0=207-176\mu>0$. Solving gives
\eqref{eq:middle-weights}; the estimates below verify the required signs and discriminant bound.

Define
\begin{align*}
 P_3(\mu)&=218058730848\mu^3-299524062427\mu^2
                   +115931336840\mu-291720000,\\
 P_2(\mu)&=28181247872\mu^2-36876645103\mu+12396836760.
\end{align*}
For $(R_\square,R_C,R_1,R_2,R_{1/3})$, take the positive multipliers
\begin{align}\label{eq:middle-weights}
 \lambda_V&=\frac{203453\mu+4488}{22500D_0},&
 \lambda_C&=\frac{P_3(\mu)}{73125000\mu D_0},\notag\\
 \lambda_1&=\frac{111576-29909\mu-63240\mu^2}{4680D_0},&
 \lambda_2&=\frac{24-31\mu}{2250},\notag\\
 \lambda_{1/3}&=\frac{P_2(\mu)}{8125000D_0}.
\end{align}
All these multipliers are positive for $11/20\le\mu\le29/40$.
Indeed, $D_0>0$, $24-31\mu>0$, and the numerator of $\lambda_1$ decreases to the
positive value $1133029/20$. The numerator and denominator of $\lambda_V$ are positive. The leading coefficient of $P_2$ is positive and its
discriminant is
\[
 -37546364196511618271<0.
\]
For $P_3$, put $z_0=\mu-11/20\in[0,7/40]$. Its exact expansion is
\[
 P_3(11/20+z_0)
 =\frac{18288015445337}{2000}
 -\frac{782841679257}{50}z_0
 +\frac{301364217361}{5}z_0^2+218058730848z_0^3.
\]
In particular, $P_3>10^9(9-16z_0)>0$.

Define
\begin{align}\label{eq:middle-raw-coefficients}
 \Lambda&=\frac{169}{25}\lambda_1+\frac{49}{4}\lambda_2,&
 \mathsf A&=2+20\mu\lambda_V-\Lambda,\notag\\
 \mathsf B&=5\lambda_V+90\lambda_2,&
 \mathsf C^0&=-9+20\mu\lambda_V(k^2-Q_*)
                  +\frac{507}{10}\lambda_1+\frac{931}{8}\lambda_2.
\end{align}
The upper bound $Q_*$ for the normalized second moment of $j-ka$ is
given explicitly in \eqref{eq:quadratic-bound-j-ka}. 
Let
\begin{align}\label{eq:middle-covariance-coefficient}
 P_R(\mu)&=-218058730848\mu^3+580543404659\mu^2\notag\\
        &\qquad-492344180708\mu+135174628560,\notag\\
 \mathcal R(\mu,g)&=\frac{P_R(\mu)}{73125000D_0}
                      +2\lambda_C\left(\frac\mu2-g\right).
\end{align}

Substituting these formulas into \eqref{eq:multiplier-cancellations}
verifies the four cancellations. The remaining $y^2$, $y$, and constant
coefficients give \eqref{eq:middle-raw-coefficients}; the coefficient of
$\chi$ is $\mathcal R$. For example, the coefficient of $V_T$ is
$(25-20\mu)\lambda_V+(169/25)\lambda_1+(449/4)\lambda_2=2$,
and the term in $\Xi$ is $\Lambda\Xi$. Moving that term to the other
side gives its minus sign in \eqref{eq:middle-moment-identity}.
Thus the displayed formulas give the exact identity in the statement.

The coefficient $\mathcal R$ of $\chi$ is positive. To see this, write
$z_0=\mu-7/10\in[-3/20,1/40]$. Then
\[
 P_R(7/10+z_0)=\frac{102912833223}{500}
 -\frac{3243713299}{25}z_0
 +\frac{613100349391}{5}z_0^2-218058730848z_0^3.
\]
The constant exceeds $200\cdot10^6$. The linear term is bounded below
by $-130\cdot10^6/40$, and the last two terms together are
nonnegative, since their quadratic factor is at least
$10^9(122-219/40)>0$. Thus $P_R>0$, and $\mathcal R>0$ follows from
$g\le\mu/2$ and $\lambda_C>0$.

Set $\ell=10/3$ as in the statement. The formulas above give
\[
 \Lambda=\frac{846960-455113\mu-277388\mu^2}{4500D_0},\qquad
 \frac{11}{10}-\Lambda
 =\frac{277388\mu^2-416087\mu+177690}{4500D_0}>0.
\]
For the last sign, put $\delta=\mu-3/4<0$; the numerator is
$277388\delta^2-5\delta+43311/2>0$. Positivity of the weights now gives
\begin{equation}\label{eq:fixed-slope-signs}
 0<\Lambda<\frac{11}{10},\qquad
 \mathsf A=2+20\mu\lambda_V-\Lambda>\frac9{10},\qquad
 {\ell}-\Lambda {\xi}>\frac{10}{3}-\frac{51}{20}=\frac{47}{60}>0,
\end{equation}
since ${\xi}\le1/\mu+1/2\le51/22$ on $\mathcal D$.

Apply \eqref{eq:middle-quadratic-coefficients} with ${\ell}=10/3$ and the
constant \eqref{eq:middle-adjusted-constant}, obtaining
$\mathfrak a_{\mathrm f},\mathfrak b_{\mathrm f},\mathfrak c_{\mathrm f}$.
Put
\[
 P_A(\mu)=218240\mu^2-222187\mu+203208,
 \qquad \mathsf A=\frac{P_A(\mu)}{900D_0},
\]
so $P_A>0$ by \eqref{eq:fixed-slope-signs}. Define $N_{\mathrm f}$ by
\begin{equation}\label{eq:middle-discriminant}
 \Delta_{\mathrm f}=4\mathfrak a_{\mathrm f}\mathfrak c_{\mathrm f}
                         -\mathfrak b_{\mathrm f}^{2}
 =\frac{N_{\mathrm f}(\mu,g)}{
 17795700000000\mu^2(\mu-4)^2(176\mu-207)^2P_A(\mu)}.
\end{equation}
These definitions give an integer polynomial $N_{\mathrm f}$ of bidegree
$(10,3)$. The denominator in \eqref{eq:middle-discriminant} is
positive on $\mathcal D$, and \zcref{lem:arithmetic-certificate}
proves $N_{\mathrm f}>0$ there.
This proves the final assertion of \zcref{lem:algebraic-combination}.

\end{proof}

\subsection{Polynomial positivity}

We recall the classical Bernstein coefficient bound and change-of-basis
formula; see \cite{FaroukiRajan1988}. Allowing a separate degree in the
second variable for each first-variable coefficient is simply an
iterated use of the univariate formula. We include the short proof.

\begin{lemma}\label{lem:bernstein-coefficients}
Let $n,m$ be nonnegative integers. For $0\le i\le n$, put
\[
 B_{i,n}(z)=\binom ni z^i(1-z)^{n-i}.
\]
Let $p(z_1,z_2)=\sum_{k,l}a_{kl}z_1^kz_2^l$ be a polynomial with
real coefficients and bidegree at most $(n,m)$, and choose integers
$n_i\ge m$ for $0\le i\le n$. Then
\begin{equation}\label{eq:bernstein-coefficients}
 \begin{gathered}
 p(z_1,z_2)=\sum_{i=0}^n B_{i,n}(z_1)
                       \sum_{j=0}^{n_i}\beta_{ij}B_{j,n_i}(z_2),\qquad \beta_{ij}=\sum_{\substack{k\le i\\l\le j}}
 a_{kl}\frac{\binom ik}{\binom nk}\frac{\binom jl}{\binom{n_i}l}.
 \end{gathered}
\end{equation}
In particular $p(z_1,z_2)\ge\min_{i,j}\beta_{ij}$ on $[0,1]^2$.
The univariate version omits the second variable and its index.
\end{lemma}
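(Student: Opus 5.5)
We prove the two-variable Bernstein expansion in \eqref{eq:bernstein-coefficients} by applying the univariate change-of-basis formula twice, once in each variable. The positivity claim then follows from the partition of unity.

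The univariate formula is proved from the binomial theorem. Write $1=z+(1-z)$, so that for $k\le n$
\[
 z^k=z^k\bigl(z+(1-z)\bigr)^{n-k}=\sum_{i=k}^n\binom{n-k}{i-k}z^i(1-z)^{n-i}.
\]
The identity $\binom{n-k}{i-k}=\binom nk^{-1}\binom ik\binom ni$ turns this into
\[
 z^k=\sum_{i\ge k}\frac{\binom ik}{\binom nk}B_{i,n}(z).
\]
Summing against the coefficients of a univariate polynomial of degree at most $n$ and exchanging the order of summation gives the univariate coefficients $\beta_i=\sum_{k\le i}a_k\binom ik/\binom nk$. The terms with $k>i$ drop out, because $\binom ik=0$ for such $k$.

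For the bivariate statement, first expand in $z_1$. Group $p=\sum_k z_1^k p_k(z_2)$, where $p_k(z_2)=\sum_l a_{kl}z_2^l$. Applying the univariate formula in $z_1$ gives
\[
 p=\sum_i B_{i,n}(z_1)\,q_i(z_2),\qquad q_i(z_2)=\sum_{k\le i}\frac{\binom ik}{\binom nk}p_k(z_2).
\]
Each $q_i$ is a polynomial in $z_2$ of degree at most $m\le n_i$. Expand $q_i$ in the Bernstein basis of degree $n_i$ using the univariate formula once more, applied to its monomial coefficients $\sum_{k\le i}a_{kl}\binom ik/\binom nk$. The resulting double sum is exactly $\beta_{ij}$ as stated. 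Since the index $i$ is fixed during the second expansion, letting the degree $n_i$ depend on $i$ costs nothing.

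For the lower bound, note that on $[0,1]^2$ the functions $B_{i,n}(z_1)B_{j,n_i}(z_2)$ are nonnegative. For each fixed $i$, $\sum_j B_{j,n_i}(z_2)=1$ by the binomial theorem, and likewise $\sum_i B_{i,n}(z_1)=1$. Hence $p$ is a convex combination of the $\beta_{ij}$, and so $p\ge\min_{i,j}\beta_{ij}$ on the square.

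There is no real obstacle here. The only point needing care is the binomial-coefficient identity and the bookkeeping that the index ranges $k\le i$ and $l\le j$ arise from vanishing binomials. That bookkeeping also requires $m\le n_i$, so that $\binom{n_i}{l}\ne0$ for every $l\le m$.
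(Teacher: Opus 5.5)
Your proof is correct and follows the paper's argument. You use the univariate change of basis $z^k=\sum_{i\ge k}\binom ik\binom nk^{-1}B_{i,n}(z)$ from the binomial theorem, apply it first in $z_1$ and then row by row in $z_2$ with degree $n_i\ge m$, and get the lower bound from the convex-combination (partition of unity) property; you simply spell out the binomial-coefficient identity that the paper leaves implicit.
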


\begin{proof}
The binomial theorem gives
\[
 z^k=\sum_{i=k}^n\frac{\binom ik}{\binom nk}B_{i,n}(z).
\]
First apply this identity in $z_1$. Each resulting coefficient is a
polynomial in $z_2$ of degree at most $m$; expand the $i$th such polynomial
in the Bernstein basis of degree $n_i$. This gives
\eqref{eq:bernstein-coefficients}. Each inner sum is a convex combination
of its coefficients on $[0,1]$, and so is the outer sum. This proves
the lower bound.
\end{proof}

We apply the Bernstein coefficient criterion to the two Gaussian
polynomials and to the numerator in \eqref{eq:middle-discriminant}.

\begin{lemma}\label{lem:arithmetic-certificate}
The polynomials \eqref{eq:mc-uniform-poly} and
\eqref{eq:gaussian-d-polynomial} satisfy
\begin{equation}\label{eq:gaussian-polynomial-positive}
 p_U(r)>\frac{119}{160}\quad(0\le r\le3/5),\qquad
 p_C(q)>\frac{399}{4000000}\quad(0\le q\le13/50).
\end{equation}
The integer polynomial $N_{\mathrm f}$ defined by
\eqref{eq:middle-discriminant} is strictly positive throughout
$\mathcal D$.
\end{lemma}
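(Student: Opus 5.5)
The plan is to apply the Bernstein coefficient criterion of \zcref{lem:bernstein-coefficients} to each of the three polynomials after an affine change of variables that maps the relevant domain onto $[0,1]$ or $[0,1]^2$. For $p_U$, substitute $r=(3/5)z$ with $z\in[0,1]$. Expand $p_U((3/5)z)$ as an exact rational polynomial of degree $13$ and compute its Bernstein coefficients $\beta_i$ from \eqref{eq:bernstein-coefficients}. If every $\beta_i$ exceeds $119/160$, then $p_U>119/160$ on $[0,3/5]$ (strictly, because the bound is strict and the interval is compact). Treat $p_C$ the same way with $q=(13/50)z$ and degree $15$. Because the claimed margin $399/4000000$ is tiny, the small-$q$ region is delicate: $p_C(0)=4A_C(0)K(0)^3/6$ is small but positive. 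If the degree-$15$ coefficients do not all clear the threshold, I would first raise the Bernstein degree (degree elevation). If that is still not enough, I would subdivide $[0,13/50]$ at a point such as $q=1/20$ and apply the criterion on each piece after rescaling.

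For $N_{\mathrm f}$, I first compute it explicitly. I substitute \eqref{eq:middle-weights}, \eqref{eq:middle-raw-coefficients}, \eqref{eq:middle-adjusted-constant} with $\ell=10/3$, $Q_*$ from \eqref{eq:quadratic-bound-j-ka}, $k=(1-\mu)/\mu$, $\xi=(1+g)/\mu$, $\gamma=4-\mu$ and $a_0=1-\mu/4-g/2$ into \eqref{eq:middle-quadratic-coefficients}. I then clear the displayed denominator to get the integer polynomial of bidegree $(10,3)$. The domain $\mathcal D$ is not a rectangle, since $0\le g\le\mu/2$. So I map it to the unit square by $\mu=11/20+(7/40)z_1$ and $g=(\mu/2)z_2$. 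This keeps polynomiality; the bidegree becomes at most $(13,3)$ because $g^3$ contributes $\mu^3$. Then I compute the coefficients $\beta_{ij}$ with exact rational arithmetic and check that they are all positive.

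The main obstacle is that $N_{\mathrm f}$ may fail the coefficient test on the whole square even though it is positive. This is likely near the corners $\mu=29/40$ or $g=\mu/2$, where the covariance floor $q_0$ and the term $\mathcal R$ are weakest. If that happens, I would first elevate the degree in $z_2$, using the freedom to choose a separate $n_i$ for each $i$ that \zcref{lem:bernstein-coefficients} allows. Next I would subdivide in $z_1$ into a few subintervals, applying the de Casteljau split to each. Every piece then gets an exact certificate consisting of a finite list of positive rationals. Strict positivity follows because every coefficient is strictly positive, so the minimum over the compact domain is positive.
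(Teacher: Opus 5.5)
Your plan for $N_{\mathrm f}$ is exactly what the paper does: map $\mathcal D$ onto $[0,1]^2$ by $\mu=11/20+7z_1/40$, $g=\mu z_2/2$, then check Bernstein coefficients, elevating the $z_2$-degree row by row where needed. For $p_U$ and $p_C$, however, the step you rely on fails, and your fallback is aimed at the wrong region.

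Both polynomials have a deep interior minimum with large curvature. Near such a minimum, the degree-$n$ Bernstein coefficients undershoot the polynomial by roughly $z(1-z)p_{zz}/(2n)$.

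\emph{The polynomial $p_U$.} One has $p_U(0)=\frac{338}{625}\cdot 630\approx 341$ and $p_U(3/5)\approx 36$, but $p_U(4/9)\approx 0.765$ with $p_U''(4/9)\approx 4\cdot 10^{3}$. In $z=5r/3$ the undershoot at degree $13$ is of order $10$, so the coefficients near $z\approx 0.74$ are negative. Indeed, at $z=20/27$ the two terms $b_{13}B_{13,13}+b_{12}B_{12,13}$ already sum to about $2.7$, which exceeds $p_U(4/9)$.

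\emph{The polynomial $p_C$.} The value $p_C(0)\approx 1.15\cdot 10^{-3}$ is not the delicate part. The minimum, about $1.19\cdot 10^{-4}$, sits near $q=1/7$, and there the degree-$15$ coefficients are $b_8\approx b_9\approx -2\cdot 10^{-4}$. A split at $q=1/20$ leaves this untouched.

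Degree elevation only converges like $1/n$. The true minima exceed the targets by only about $2\cdot 10^{-2}$ for $p_U$ and $2\cdot 10^{-5}$ for $p_C$, so closing the gap this way would need degree in the thousands and the hundreds, respectively.

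The idea you are missing is the paper's Taylor-remainder device. Expand at a point $c$ near the minimizer ($c=4/9$ for $p_U$, $c=1/7$ for $p_C$) and set $Q(r)=[p(r)-p(c)-p'(c)(r-c)]/(r-c)^2$, a polynomial of degree two lower. Unlike $p$, this quotient stays far from zero. A single native-degree Bernstein expansion therefore certifies $Q_U\ge 1422>1000$ and $Q_C\ge 49/1000>1/25$. Then
\[
p(r)\ge p(c)+p'(c)(r-c)+(\min Q)(r-c)^2\ge p(c)-\frac{p'(c)^2}{4\min Q}.
\]
The center values $p_U(4/9)>3/4$, $|p_U'(4/9)|<5$, $p_C(1/7)>10^{-4}$, $|p_C'(1/7)|<1/5000$ give exactly $3/4-25/4000=119/160$ and $10^{-4}-(1/5000)^2/(4/25)=399/4000000$. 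That is where these unusual thresholds come from.

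A pure Bernstein certificate is still possible in principle, since the true minima do exceed the targets. It would require subdivision concentrated at the minimizers, for instance pieces of width about $0.02$ in $r$ around $4/9$ for $p_U$. As written, your plan does not produce a certificate for these two polynomials, and the repaired version is far heavier than the paper's two small expansions.
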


\begin{proof}
We expand each Gaussian polynomial to first order at a chosen
center and bound the remainder divided by the squared displacement
from below. Combining this bound with the value and derivative at
the center will prove positivity by completing the square.
We use centers $4/9$ and $1/7$, for which
\eqref{eq:gaussian-center-checks} gives the required value and
derivative bounds. Define the remainder quotients by
\begin{align*}
 Q_U(r)&=\frac{p_U(r)-p_U(4/9)-p_U'(4/9)(r-4/9)}{(r-4/9)^2},\\
 Q_C(q)&=\frac{p_C(q)-p_C(1/7)-p_C'(1/7)(q-1/7)}{(q-1/7)^2}.
\end{align*}
These are polynomials, including at the apparent singularities. Put
\begin{equation}\label{eq:gaussian-bernstein-polynomials}
 \mathscr B_U(z)=Q_U(3z/5),\qquad \mathscr B_C(z)=Q_C(13z/50).
\end{equation}
Their degrees are $11$ and $13$, respectively. For the polynomial used
in the intermediate range of $\mu$, put
\begin{equation}\label{eq:middle-bernstein-polynomial}
 \mu(z_1)=\frac{11}{20}+\frac7{40}z_1,\qquad
 H_{\mathrm f}(z_1,z_2)=10^{-21}N_{\mathrm f}
                 \bigl(\mu(z_1),\mu(z_1)z_2/2\bigr).
\end{equation}
It has bidegree $(12,3)$, and the substitution parametrizes all of
$\mathcal D$ as $(z_1,z_2)$ ranges over $[0,1]^2$.

Let $b_i^U,b_i^C$ be the Bernstein coefficients of
$\mathscr B_U,\mathscr B_C$ in degrees $11$ and $13$, respectively.
For $H_{\mathrm f}$, use degree $12$ in $z_1$ and choose the degree
$n_i\ge3$ in $z_2$ for each row so that every resulting coefficient
is positive. The separate row degrees are permitted by
\eqref{eq:bernstein-coefficients}.
Write $\beta_{ij}$ for the resulting coefficients of $H_{\mathrm f}$.
In \zcref{tab:bernstein-certificate}, $U_i,C_i,b_i$ denote the
integer lower bounds in the columns headed $b_i^U$,
$1000b_i^C$, and $100\beta_{ij}$, respectively.
Thus, wherever the corresponding table entry is present,
\begin{equation}\label{eq:bernstein-integer-bounds}
 b_i^U\ge U_i,\qquad 1000b_i^C\ge C_i,\qquad
 100\beta_{ij}\ge b_i\quad(0\le j\le n_i).
\end{equation}
A dash indicates an index above the degree in the corresponding expansion.
To reproduce the table, first expand the rational definitions of $p_U,p_C$
and $N_{\mathrm f}$, cancel their stated polynomial factors, make the
substitutions \eqref{eq:gaussian-bernstein-polynomials} and
\eqref{eq:middle-bernstein-polynomial}, and apply the finite sums in
\eqref{eq:bernstein-coefficients}. For example, the first univariate coefficient is
\[
 b_0^U=\mathscr B_U(0)=\frac{2358459066731126372}{1362025156640625}
 =1731+\frac{793520586204497}{1362025156640625}>1731.
\]
For $H_{\mathrm f}(z_1,z_2)=\sum a_{kl}z_1^kz_2^l$, its first row uses degree
$n_0=3$ and has coefficients
\[
 \beta_{0j}=a_{00}+\frac j3a_{01}
       +\frac{\binom j2}{3}a_{02}+\binom j3a_{03},
 \qquad 0\le j\le3.
\]
This gives the four comparisons $100\beta_{0j}\ge4355$ in the first
row of the last column. The other rows use the same finite formula with
the displayed $n_i$. The last column bounds every coefficient in its row.
The table records $26$ coefficient bounds for the Gaussian estimates and
$\sum_{i=0}^{12}(n_i+1)=63$ for intermediate $\mu$.

\begin{table}[htbp]
\centering
\begin{tabular}{r r r r r}
\toprule
&\multicolumn{2}{c}{Lower bounds}&&Common lower bound\\
$i$&$b_i^U$&$1000b_i^C$&Degree $n_i$&$100\beta_{ij}$, $0\le j\le n_i$\\
\midrule
0&1731&49&3&4355\\
1&1879&89&3&4993\\
2&2013&125&3&4752\\
3&2128&159&3&4288\\
4&2217&190&3&3753\\
5&2273&217&3&3162\\
6&2285&242&3&2526\\
7&2244&264&3&1848\\
8&2140&283&3&1126\\
9&1965&300&3&354\\
10&1721&313&5&317\\
11&1422&324&6&218\\
12&--&332&9&6\\
13&--&336&--&--\\
\bottomrule
\end{tabular}
\caption{Integer coefficient bounds in \eqref{eq:bernstein-integer-bounds}
and Bernstein degrees for the rows of $H_{\mathrm f}$.}
\label{tab:bernstein-certificate}
\end{table}

\zcref{lem:bernstein-coefficients} gives, on the respective unit
intervals and square,
\begin{equation}\label{eq:three-bernstein-bounds}
 \mathscr B_U\ge1422>1000,\qquad \mathscr B_C\ge\frac{49}{1000}>\frac1{25},\qquad
 H_{\mathrm f}\ge\frac3{50}>0.
\end{equation}
The last inequality proves $N_{\mathrm f}>0$ throughout $\mathcal D$.
For $p_U$ and $p_C$, direct substitution in
\eqref{eq:mc-uniform-poly} and \eqref{eq:gaussian-d-polynomial} gives
\begin{equation}\label{eq:gaussian-center-checks}
 \begin{gathered}
 p_U(4/9)>\frac34,\qquad |p_U'(4/9)|<5,\\
 p_C(1/7)>\frac1{10000},\qquad |p_C'(1/7)|<\frac1{5000}.
 \end{gathered}
\end{equation}
Completing the square in each Taylor expansion, using
\eqref{eq:three-bernstein-bounds}, yields
\[
 \begin{aligned}
 p_U(r)&>\frac34-\frac{5^2}{4000}=\frac{119}{160}>0,\\
 p_C(q)&>\frac1{10000}-\frac{(1/5000)^2}{4/25}
        =\frac{399}{4000000}>0.
 \end{aligned}
\]
These estimates hold on $[0,3/5]$ and $[0,13/50]$, respectively,
and complete the proof.
\end{proof}

\section{The zero-field case}\label{app:zero-field}

The zero-field support description is due to \cite{Lopatto2026}.
We give a proof using the estimates above and record their consequences
for the endpoint density and atom. In particular, the bound for the
upper atom below improves the estimate $a_\beta>1/3$ of
\cite[Theorem~1.2 and Proposition~2.3]{Chen2026EndpointMass} to
$a_\beta>\sqrt2-1$.

\begin{theorem}\label{thm:zero-field}
For $0<\beta\le1$, $\nu_{\beta,0}=\delta_0$. For every $\beta>1$, there exist $q_\beta\in(0,1)$ and $a_\beta>\sqrt2-1$ such that
\[
 \supp\nu_{\beta,0}=[0,q_\beta],\qquad
 \nu_{\beta,0}(\dd q)=f_\beta(q)\mathbf1_{(0,q_\beta)}\dd q
                         +a_\beta\delta_{q_\beta}(\dd q).
\]
The density $f_\beta$ is smooth and strictly positive on $[0,q_\beta]$,
where every derivative has a continuous one-sided extension from the
interior. In particular, $\nu_{\beta,0}(\{0\})=0$ when $\beta>1$.
\end{theorem}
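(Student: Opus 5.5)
The plan is to run the argument of \zcref{sec:structure} with $h=0$. The point is that every estimate in Sections~\ref{sec:profiles}--\ref{sec:positivity} was stated for $h\ge0$, and at $h=0$ one has $\alpha=0$, so the parameter bounds \eqref{eq:moment-parameter-bounds} hold automatically. The hypothesis $N(t)\le0$ is then never needed.

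\emph{Step 1: a zero-field version of \zcref{thm:derivative-moments}.} I would prove that for $h=0$ and $m_0$ satisfying the two one-sided bounds, $F(t)-2m_0G(t)=0$ implies $J(t;m_0)>0$. Since $\theta=0$ and $d>0$, \zcref{prop:large-mu-exclusion}, \zcref{prop:small-positive} and \zcref{prop:middle-positive} apply verbatim. \zcref{lem:lower-density} applies with $\alpha=0$ and $\mathfrak d=1-2r/\sigma$.

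\emph{Step 2: the replica-symmetric regime and the support near $0$.} For $\beta\le1$ I would invoke the classical result that $\nu_{\beta,0}=\delta_0$. Alternatively, \zcref{lem:variational-support} gives $\mathscr V$ with minimum at $0$; since $Q(t)\le t\,\sup S\le t$ when $m=0$, one gets $\mathscr V'\ge0$ whenever $L\le1$.

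For $\beta>1$, the second derivative $\mathscr V''(0)=\tfrac L2(1-L S(0))$ is negative because $S(0)=C(0,0)^2=1$ when $\nu=\delta_0$ up to time $0$. This excludes $\delta_0$. I would then show that $0\in\supp\nu$ (Auffinger--Chen). With a positive support point $q$, the gap argument of \zcref{sec:structure}, Step~1, applies verbatim: on a gap $(a,b)$ with $a>0$ use Step~1 of this proof; on a gap $(0,b)$ use the condition $Q(0)=0$ and $N(0)=0$ to run the same minimum argument on $N(t)/t^2$. This gives support $[0,q_\beta]$.

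\emph{Step 3: density and atoms.} On the support, $Q=t/L$ and the quotient $m=F/(2G)$ hold as before. Smoothness and positivity of $m'$ follow from \eqref{eq:cdf-derivative} and Step~1, including the one-sided values at both ends. At $t=0$ with $h=0$, $Y_0=0$ and $C_x(0,0)=0$, so $F(0)=0$ and hence $\nu(\{0\})=m(0)=0$. However, \zcref{thm:derivative-moments} needs $t>0$, so positivity at $0$ must come from continuity of $J$ or a direct computation of $J(0;0)=C_{xx}(0,0)^2>0$.

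For the upper atom, I would repeat the terminal computation with $\theta=0$. Inequality \eqref{eq:theta-independent-bound} becomes $y>a_0$, that is $(1-m_+)/m_+>1-m_+/2$, equivalently $m_+^2-4m_++2>0$. This gives $m_+<2-\sqrt2$, so $a_\beta=1-m_+>\sqrt2-1$.

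I expect the main obstacle to be the lower endpoint $t=0$. Specifically, excluding a gap adjacent to $0$ and obtaining the one-sided density positivity there need care, because $\rho$ degenerates as $t\to0$ (indeed $d=1/(\lambda^2t)\to\infty$). There I would rely on continuity of the moment functions together with the explicit value $J(0;m(0))=\E C_{xx}^2>0$.
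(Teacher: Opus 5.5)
Your outline follows the paper's proof closely: the zero-field moment inequality at positive times, the origin in the support and $\mathscr V''(0)$ at $\nu=\delta_0$, gap exclusion, the quotient $m=F/(2G)$, the direct formula at $q=0$, and $y>a_0$ at the upper endpoint. However, the gap $(0,b)$ is not actually excluded, and the remedy you propose targets the wrong difficulty. For a gap with positive left endpoint, the minimum argument uses
\[
 \int_{t_-}^{t_+}\frac{N(t)}{t^2}\dd t=\frac{Q(t_-)}{t_-}-\frac{Q(t_+)}{t_+}=0 .
\]
At $t_-=0$, the facts $Q(0)=0$ and $N(0)=0$ give only an endpoint sign and say nothing about the integral. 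You need two things. First, $N(t)=\int_0^t(S(r)-S(t))\dd r=O(t^2)$ because $S$ is Lipschitz, so $Q(t)/t\to S(0)$ and
\[
 \int_0^{t_+}\frac{N(t)}{t^2}\dd t=S(0)-\frac1L .
\]
Second, you need the stability condition $S(0)=C(0,0)^2\le L^{-1}$. This holds because $0\in\supp\nu$ is a global minimum of $\mathscr V$ with $\mathscr V'(0)=\frac L2(0-Q(0))=0$, hence $\mathscr V''(0)=\frac L2(1-LS(0))\ge0$. This is the computation you already used to exclude $\delta_0$, now applied to the actual minimizer.

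Without this condition, $N$ could stay nonnegative on the whole gap, and nothing forces an interior minimum. With it, either $N\equiv0$ or $N$ has a negative minimum at some $t_0\in(0,t_+)$, and your Step~1 applies there. So the degeneracy of $\rho$ as $t\to0$ never enters the gap exclusion, and continuity of the moments plus $J(0;m(0))$ play no role in it. That explicit value matters only for strict positivity of the density at $q=0$. There, $m(0)=0$ and $C_x(0,0)=0$ give $m'(0)=C_{xx}(0,0)^2/(2C(0,0)^3)$, which is positive because $C_{xx}(0,0)=-2\lambda^2c_0\neq0$ with $\lambda>0$. Continuity of $J$ alone would give only $J(0)\ge0$.

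A smaller point concerns your alternative argument for $\beta\le1$. It yields only $\mathscr V'\ge0$, which makes $0$ a global minimum but not the only one. You need strict increase of $\mathscr V$. This follows from $S(t)<1$ for $t>0$, using $C\le1-B^2$, strict monotonicity of $B(t,\cdot)$, and the positive density of $Y_t$; these give $Q(t)<t\le t/L$. The bound $S\le1$ holds for every coefficient, so the qualifier ``when $m=0$'' is unnecessary. Citing the classical result instead is fine.
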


\begin{proof}
\emph{Step 1: The moment inequality at positive times.}
The PDE estimates in \zcref{sec:profiles} apply unchanged at $h=0$. 
\zcref{lem:lower-density} also applies at $h=0$: then $k_0=2K_t$
and, in \eqref{eq:lower-ratio-pde},
$\mathfrak d(\sigma,r)=1-2r/\sigma$ and $\mathfrak d_{rr}=0$. In the moment argument, $\theta=0$ and $d>0$
satisfy \eqref{eq:moment-parameter-bounds} automatically. The weighted density
has Gaussian decay and a finite positive normalizing constant. The
probability measure defining $\widehat\E$ in \zcref{sec:weighted-moments}
is even when $h=0$, so $\widehat\E u=0$.
Under the one-sided conditions on $m_0$, the identity $F=2m_0G$
excludes $m_0=0$ at a positive time: the density of $Y_t$ is
strictly positive and $C_x(t,\cdot)$ is not identically zero, so
$F(t)>0$. For $m_0>0$, the normalized functions and law satisfy
the standing assumptions of \zcref{sec:weighted-moments} with
$\theta=0$. The range $\mu\ge29/40$ is excluded by
\zcref{prop:large-mu-exclusion}; the other two ranges give $O>0$
by \zcref{prop:small-positive,prop:middle-positive}.
The scaling identity \eqref{eq:weighted-moment-scaling} therefore gives
\begin{equation}\label{eq:zero-derivative-moments}
 F(t)-2m_0G(t)=0\quad\Longrightarrow\quad J(t;m_0)>0
 \qquad(0<t<L)
\end{equation}
for every $m_0$ satisfying the one-sided inequalities in
\zcref{thm:derivative-moments}.

\medskip
\emph{Step 2: The origin and the critical temperature.}
Zero belongs to the support by
\cite[Theorem~1(i)]{AuffingerChen2015Properties}. We reproduce its
monotonicity argument. The classical
critical-temperature criterion is discussed in
\cite{AizenmanLebowitzRuelle1987,Toninelli2002};
\cite[Remark~2]{Chen2017Variational} gives the related short variational
proof.
Put $L=\beta^2$ and $\nu=\nu_{\beta,0}$. By parity $Q(0)=0$.
\zcref{lem:variational-support} excludes $1$ from the support and
gives \eqref{eq:support-self-consistency} at every positive support point.
If the least support point were $a>0$, then $m=0$ before ${t_-}=La$,
so $S'=F>0$ there and
\[
 \frac {t_-} L=Q({t_-})=\int_0^{t_-} S(t)\dd t<{t_-} S({t_-})\le\frac {t_-} L,
\]
a contradiction. Here $F(t)>0$ for $t>0$, since $C_x(t,\cdot)$ is not
identically zero and the diffusion law has positive density. Hence $0\in\supp\nu$. The identity $Q(0)=0$ noted above and the formula
for the first derivative of the variational potential give
\[
 \mathscr V'(0)=\frac L2\bigl(0-Q(0)\bigr)=0.
\]
Since $0$ is a one-sided minimum of $\mathscr V$, one also has
$\mathscr V''(0)\ge0$, and therefore
\begin{equation}\label{eq:zero-stability}
 S(0)=C(0,0)^2\le L^{-1}.
\end{equation} 
If $L\le1$, then $S(t)<1$ for $t>0$: use $C\le1-B^2$, strict
monotonicity of $B$, and the positive diffusion density. Therefore
$Q(t)<t\le t/L$, the variational potential strictly increases, and
$\nu=\delta_0$. Conversely, $\nu=\delta_0$ gives $m=1$ for $t>0$
and $C(0,0)=1$, contradicting \eqref{eq:zero-stability} when $L>1$.

\medskip
\emph{Step 3: Connected support.}
Assume $L>1$. Step~1 of the proof of \zcref{thm:main} excludes
gaps with positive left endpoint, using \eqref{eq:zero-derivative-moments}.
For a gap $(0,b)$, put ${t_+}=Lb$ and $m_0=m(0)$. Then $m=m_0$ on
$(0,t_+)$. Now $N(0)=0$, $N({t_+})\ge0$, and 
\begin{equation}\label{eq:zero-gap-integral}
 \int_0^{t_+}\frac{N(t)}{t^2}\dd t=S(0)-\frac1L\le0.
\end{equation}
Indeed, $Q(0)=0$, $Q'=S$, and Lipschitz continuity gives
\[
 N(t)=\int_0^t(S(r)-S(t))\dd r=O(t^2),\qquad
 \frac{Q(t)}t\longrightarrow S(0).
\]
Integrating $N(t)/t^2=-(Q(t)/t)'$ from $\varepsilon$ to $t_+$ and
letting $\varepsilon\downarrow0$ proves \eqref{eq:zero-gap-integral}.
If $N\not\equiv0$, \eqref{eq:zero-gap-integral} forces a negative
interior minimum. At that minimum $N'=-tQ''=0$ and
$N''=-Q''-tJ(t;m_0)\ge0$, contradicting
\eqref{eq:zero-derivative-moments}. If $N\equiv0$, then
$Q''=J(t;m_0)=0$ gives the same contradiction. Thus the
support is $[0,q_\beta]$ with $0<q_\beta<1$.

\emph{Step 4: Density and endpoint masses.}
Set $t_+=Lq_\beta$. The quotient argument from Step~2 of the proof of
\zcref{thm:main}, beginning with \eqref{eq:cdf-quotient}, applies here:
$Q=t/L$ on $[0,t_+]$, and, by right-continuity of $m$ and continuity of
$F/(2G)$,
\[
 m(t)=\frac{F(t)}{2G(t)}
 \qquad\text{for }0\le t<t_+.
\] 
For differentiation at $t_+$, we use $F/(2G)$ as the continuous
extension of the interior values of $m$, with value $m(t_+-)$; the original
right-continuous distribution function has value $m(t_+)=1$. 
The quotient is Lipschitz, so the measure has no singular part in the
interior of its support. The induction using polynomial-moment
identities in Step~2 of that proof gives smoothness with one-sided
derivatives. At zero,
$F(0)=C_x(0,0)^2=0$, so right-continuity gives $m(0)=0$ and there is
no atom there. For $0<t\le {t_+}$, \eqref{eq:zero-derivative-moments} and $2Gm'=J$
give $m'>0$. At zero, $m(0)=0$ and $C_x(0,0)=0$ imply, from the
continuous one-sided form of \eqref{eq:ito-F}, that
$F'(0)=C_{xx}(0,0)^2$. Differentiating the
quotient at zero therefore gives
\begin{equation}\label{eq:zero-origin-density}
 m'(0)=\frac{C_{xx}(0,0)^2}{2C(0,0)^3}>0,
\end{equation}
because the normalization parameter in \zcref{lem:log-derivative} is positive.
Thus $f_\beta(q)=Lm'(Lq)$ is smooth and strictly positive on the closed
interval. In particular, $C(0,0)=L^{-1/2}$ since $S=1/L$.

At the upper endpoint use the moment values in
\eqref{eq:terminal-moments}, writing $m=m({t_+}-)\in(0,1]$.
Apply \zcref{lem:general-mean-bound} with $\theta=0$ and
$\Sigma\ge d>0$ to obtain $y\ge a_0+\Sigma/2>a_0$, hence
\[
 \frac{1-m}{m}>1-\frac m2,\qquad m^2-4m+2>0.
\]
Since $0<m\le1$, this gives $m<2-\sqrt2$ and
$a_\beta=1-m>\sqrt2-1$. The absence of mass outside the support and
of an atom at zero gives the stated decomposition.
\end{proof}

\bibliographystyle{plainnat}
\bibliography{positive_field.bib}
\end{document}